\documentclass[12pt,reqno]{amsart}

\usepackage{amsmath,amsthm,amssymb,amscd,euscript,longtable,enumitem}

\usepackage[
  style=numeric-comp,
  sorting=nyt,
  giveninits=true,
  maxnames=4,
  minnames=1,
  backend=biber,
  doi=false,
  isbn=false,
  url=false,
  eprint=true
]{biblatex}
\AtEveryBibitem{
	\clearfield{issn} 
	\clearfield{doi} 
	\clearfield{isbn} 
	
	\ifentrytype{online}{}{
		\clearfield{url}
	}
}
\usepackage{graphicx,color,mathtools,tikz}
\usepackage[colorlinks=true]{hyperref}
\hypersetup{
    linkcolor=black,
    citecolor=red,
    urlcolor=blue
}

\usepackage{nicematrix}

\usetikzlibrary{math,fit} 

\usepackage[margin=2.5cm]{geometry}
\def \r{\mathbb R}

\def \q{\mathbb Q}
\def \z{\mathbb Z}
\def \n{\mathbb N}

\def \M{\mathfrak M}

\DeclareMathOperator{\polyomino}{polyomino}
\DeclareMathOperator{\Aff}{Aff}
\DeclareMathOperator{\Al}{Al}

\DeclareMathOperator{\Head}{Head}
\DeclareMathOperator{\LC}{LC}
\DeclareMathOperator{\LLS}{LLS}

\DeclareMathOperator{\SL}{SL}
\DeclareMathOperator{\GLC}{GLC}
\DeclareMathOperator{\GL}{GL}
\DeclareMathOperator{\Mat}{Mat}

\DeclareMathOperator{\tr}{tr}

\renewbibmacro{in:}{}

\makeatletter
\def\keywords{\xdef\@thefnmark{}\@footnotetext}
\makeatother


\usepackage[todonotes={textsize=footnotesize}]{changes}
\definechangesauthor[color=blue]{OK}
\definechangesauthor[color=red]{MA}

\DeclareMathOperator{\wug}{wug}

\newtheorem{theorem}{Theorem}[section]

\newtheorem{proposition}[theorem]{Proposition}
\newtheorem{corollary}[theorem]{Corollary}
\theoremstyle{remark}
\newtheorem{remark}[theorem]{Remark}
\theoremstyle{definition}
\newtheorem{definition}[theorem]{Definition}
\newtheorem{example}[theorem]{Example}
\newtheorem{problem}{Problem}

\author[O.~Karpenkov]{Oleg~Karpenkov}
\address{Department of Mathematical Sciences\\ University of Liverpool\\ Peach Street \\ Liverpool L69~7ZL}
\email{karpenkov@liv.ac.uk}

\title{Wug-Snake Graphs and Markov Numbers of Matrix Semigroups}
\thanks{The second author is supported by a CSC grant from the Chinese government.}
\author[Y.~Ma]{Yefei~Ma}
\address{IMAG – UMR 5149\\ Universit\'e de Montpellier\\ France}
\email{yefei.ma@umontpellier.fr}

\date{\today}
\subjclass[2020]{Primary 11J06; Secondary 05C70, 11J70, 20M20}

\begin{document}
\begin{abstract}
Classically, Markov numbers are recovered as perfect matching numbers of
domino snake graphs. We extend this correspondence by introducing weighted
universal generalised snake graphs, or wug-snake graphs. These are weighted
ordered bipartite graphs whose perfect matching sequences encode linear
recurrences. To every wug-snake graph we associate a continuant matrix and
prove that the determinant of this matrix equals the weighted perfect matching sum.

We then introduce polyomino wug-tiles, bodies of wug-snake graphs that act
linearly on state vectors. Every integer matrix admits a canonical
polyomino wug-tile. Our main result identifies the wug-snake determinant of
a tile representing a matrix $A$ with the Markov-Davenport form of $A$.
Consequently, algebraic and geometric Markov numbers of matrices and matrix
semigroups can be expressed as weighted perfect matching determinants. Further we define Frobenius maps for matrix semigroups and discuss examples
recovering classical Markov numbers
%, weighted PLLS-sequences, 
and higher-dimensional lattice realisations.
\end{abstract}

\maketitle

\tableofcontents

\section{Introduction}

%The classical theory of Markov numbers links several apparently different
%objects: minima of indefinite quadratic forms, continued fractions, Cohn
%matrices, Farey arithmetic, and perfect matchings of special snake graphs.
%The aim of this paper is to extend this circle of ideas from the classical
%discrete Markov spectrum to a broader matrix-semigroup setting.

\subsection{Background and motivation}
In 1879, A.~Markov studied the minimal nonzero absolute values of indefinite
binary quadratic forms at integer points. In~\cite{Markoff1879,Markoff1880}
he introduced the spectrum of such minima, now called the
\textit{Markov spectrum}. Its subset below $3$ is
discrete; see
e.g.~\cite{Cusick1989}. This subset gives rise to the classical Markov numbers, whose set we denote by $\mathcal M$. These are the
positive integers appearing in solutions of the Markov equation
\[
    x^2+y^2+z^2=3xyz.
\]
Via the {\it Frobenius map} $\phi: \q_{0,1} \to \mathcal M$, $t \mapsto m_t$, one can index a Markov number $m_t$ by a rational number $t$, which we call the {\it Farey index} of $m_t$. In 1955, H.~Cohn linked $m_t$ to special matrices in $\SL(2,\z)$, now called Cohn
matrices~\cite{Cohn1955}. This multiplicative viewpoint enables one to express the Farey indexed Markov number $m_t$ via the perfect matching number of a so-called ``domino snake graph'' $D(t)$; see Subsection \ref{subsec:domino-snake-perfect-matchings}. We denote this perfect matching number by $\mu(D(t))$. A celebrated theorem, see \cite[Theorem 7.12]{Aigner2013} states the elegant formula $\mu(D(t)) = m_t$ for every $t \in \q_{0,1}$.

On the other hand, a multidimensional version of the Markov spectrum was introduced by
H.~Davenport in his study of cubic forms; he computed the first few Markov
minima in dimension three~\cite{Davenport1941,Davenport1943,Davenport1947}. 

\vspace{2mm}

The above historical theories (see a more complete description in Section \ref{Markov minima and Markov spectrum} and \ref{Basic properties of discrete Markov spectrum}) lead to the following natural problem.

\vspace{2mm}

\noindent
\textbf{Guiding problem.}
{\it Develop a perfect-matching theory that extends the classical domino snake graph description of Markov numbers beyond the Cohn matrices, and that applies
to integer matrices, matrix semigroups, and higher-dimensional
Markov-Davenport forms.}

\vspace{2mm}

In this paper, we establish a unifying perfect-matching theory that resolves this problem.

\subsection{Main results}

The classical theory relates Markov numbers to three apparently different
objects: Cohn matrices, perfect matchings of
domino snake graphs (mentioned above), and continued fractions (see Section \ref{subsection: Discrete Markov spectrum and Markov equation}). In this paper we show that this relationship is a
special case of a more general mechanism connecting weighted perfect
matchings, linear recurrences, matrix semigroups, and Markov-Davenport
forms.

\medskip

First, for $n \in \z_{\ge 0} \cup \{\infty\}$, we introduce \textit{wug-snake graphs} $K_n(w)$. These are ordered
weighted bipartite graphs whose weight matrices $w = (w_{ij})$ are super upper triangular (Definition \ref{def: super upper triangular})
and whose subdiagonal entries are equal to 1. In this construction, we show that, in particular, every domino snake graph can be represented as a wug-snake graph (Proposition \ref{prop: domino are wug-snake}). The first key result following this construction is as follows.

\medskip

\noindent
\textbf{A. Wug-snake graphs and continuants.}
To every wug-snake graph $K_n(w)$ we associate a continuant matrix
$C_n(w)$. We prove that the weighted perfect matching sum $\mu(K_n(w))$ is given by the determinant of the continuant matrix
$C_n(w)$. Moreover, the perfect matching sequence satisfies a recurrence formula
\[
    \mu(K_n(w))
    =
    \sum_{i=1}^n w_{i,n}\,\mu(K_{i-1}(w)).
\]
See
Theorem~\ref{theorem:wug-det}.

\medskip

Next, in order to finely describe the structure of a wug-snake graph $K_n(w)$, we decompose it into two parts: an initial segment called the \textit{head} $H$, and the remainder called the \textit{body} $B$, where the set of edges connecting vertices of the head to vertices of the body is called the \textit{neck}. An important numerical invariant is called the \textit{rank} of $K_n(w)$, which is defined by
\[
        \max\{j-i+1 : w_{i,j}\neq 0,\; i\leq j\}.
\]

With all these notions prepared, one can take the product $B_1B_2$ of two compatible bodies $B_1$ and $B_2$ to enlarge the graph. Given $K_n(w)$ which decomposes as $HB$ with head length $d$, for every $k\geq 0$, we define the $k$-th state vector 
\[
v_k(HB) \in \z^d,
\]
which is the last $d$ entries of the perfect matching sequence of $HB^k$; see Definition \ref{def:sum-wugs}. This brings us to three important notions:
\begin{enumerate}
    \item Given a wug-snake graph $K_n(w)$, decomposed as $HB$, whose head $H$ has length $d$. The \textit{wug-snake determinant} is defined by
\[
   \det(K_n(w)) = \det\bigl(v_0(HB),v_1(HB),\ldots,v_{d-1}(HB)\bigr);
\]
see Definition \ref{Markov-MDavenport-det}.

\item Let \(B\) be a body whose neck-length does not exceed \(d\). For
\(x\in\mathbb Z^d\), choose a head \(H_x\) of length \(d\) with state
vector \(v_0(H_xB)=x\). For any $k \geq 0$, define the \textit{$k$-step snake operator} of $B$ as
\[
   \mathcal S_{B^k}^d: \z^d \to \z^d,\quad x \longmapsto v_k(H_xB)
\]
which is linear and whose matrix is denoted by $M^d_{B^k}$; see Definition \ref{def: snake operator}.
\item The \textit{polyomino wug-tile} for $A \in \Mat(d,\z)$, which is a body $B$ whose 1-step snake operator satisfies $M_B^d = A$; see Definition \ref{def: Polyomino wug-tile}.
\end{enumerate}
Given a sequence of integers $(v_1,\ldots,v_m)$ of length $m$ and a matrix $A \in \Mat(m,\z)$, we next give a canonical construction of a wug-snake graph, whose head and body encode the two data, respectively; see Subsection \ref{subsection: Canonical construction}. The canonical construction has rank at most $2m-1$. 
Under a natural
nondegeneracy condition, any representation has rank at least $m$;
see Propositions~\ref{rank-canonic} and Theorem~\ref{transFrob-tiles}$($iii$)$. 
So, it is natural to ask if it is possible to give an explicit construction of a wug-snake graph of the minimal possible rank. We provide such constructions in Item~\textbf{C}.

\vspace{2mm}

The second key result is as follows.
\medskip

\noindent
\textbf{B. Markov numbers as wug-snake determinants.} The key to link the wug-snake determinants with Markov numbers is through Markov-Davenport forms: given $A \in \Mat(d,\z)$ and a sequence $x= (x_1,\ldots,x_d)$, let $B$ and $H$ be any polyomino wug-tile for $A$ and any head whose perfect matching sequence is $x$, i.e. $v_0(HB)=x$ (one can take, for instance, the canonical construction mentioned above). Then,
\[
f_A(x_1,\ldots,x_d) = \det(HB),
\]
where $f_A$ is the Markov-Davenport form of $A$ and $\det(HB)$ is the wug-snake determinant, i.e. the determinant
formed from the state vectors of $H,HB,\ldots,HB^{d-1}$; see Theorem \ref{MD-det}. An immediate consequence is that the \textit{geometric Markov number} of $A$, defined by $m(A)= \inf_{p \in \mathbb{Z}^n \setminus \{0\}} |f_A(p)|$, (see Definition \ref{alg-geom}, which is a multidimensional analogue of the classical Markov minimum) can be found by only searching for a head of the wug-snake graph. More precisely (see Corollary \ref{alg-geo-Markov-det-wug}(i)), there exists a head $H$ of length $d$ such that
    \[
        m(A)=|\det(HB)|.
    \]
Another key matrix invariant, which we call \textit{algebraic Markov number} (see Definition \ref{alg-geom}), is motivated by the classic Cohn matrix where Markov number appears as the upper right entry of a matrix in $\SL(2,\z)$. In the multidimensional setting, we define algebraic Markov number of $A$ by $\hat m(A) = |f_A(0,\ldots,0,1)|.$ Returning to the wug-snake setting (see Corollary \ref{alg-geo-Markov-det-wug}(ii)), if $H_0$ is the canonical realisation of the sequence $(0,\ldots,0,1)$, then the algebraic Markov number of $A$ is
\[
\hat m(A) = |\det (H_0B)|.
\]
In general, algebraic and geometric Markov number may not coincide. An interesting problem of determining for which $A \in \Mat(d,\z)$ the two Markov numbers coincide is not fully considered in this paper. We provide one such case in Example \ref{exM111M1015}.

\medskip
\noindent
\textbf{C. Construction of minimal rank wug-snake graphs.} In Theorem~\ref{thm:lc-mat-glc-ll} we prove that for $m \geq 2$ every matrix in $\Mat(m,\z)$ can be written as a product of lower companion integer matrices.
These matrices encode information from \textit{linear recurrence system}; see Definition~\ref{Frob-to-E}. For $N \in \n$, given an integer sequence $(x_1,x_2,\ldots,x_N)$ defined by a linear recurrence system, we construct a wug-snake graph (see Definition \ref{Perfect matching of a sequence -- definition}) so that the perfect matching sequence is exactly this prescribed integer sequence; see Theorem \ref{Perfect matching of a sequence -- theorem}. Following this construction, we show that every matrix in $\Mat(m,\z)$ under some non-degeneracy condition admits a polyomino wug-tile of rank $m$; see Theorem~\ref{transFrob-tiles}.

\medskip
\noindent
\textbf{D. Markov numbers for matrix semigroups.} 
Following recent studies on the computation of Markov numbers for various semigroups with two variables~\cite{KvS2020}, we make first steps to describe the cases when the number of generators exceed 2. 
We introduce an \textit{affine Farey-type subdivision scheme} and a \textit{Cohn map} from the vertices of the
associated tessellation to the semigroup; see Subsection~\ref{Markov numbers of semigroups: basic definitions}. Composing this map with the algebraic and geometric Markov number gives us
\textit{Frobenius maps for the semigroup}. This construction recovers the classical
Markov numbers in the Cohn case and produces weighted and
higher-dimensional approaches linking  the wug-snake graphs and their perfect matchings to multidimensional geometry of numbers. Table~\ref{tab:cohn_comparison} below gives one such example.
\vspace{2mm}
\begin{table}[htbp]
\centering % Better practice than \begin{center} as it avoids extra vertical spacing
\begin{tabular}{|c|c|c|}
\hline
&  Cohn matrix &  Non-Cohn matrix \\
\hline
Matrix &  
$
\begin{pmatrix}
41&29\\
65&46\\
\end{pmatrix}
$
& 
$
\begin{pmatrix}
62&43\\
111&77\\
\end{pmatrix}
$
\\
\hline
$
\begin{array}{c}
\hbox{Wug-snake}
\\
\hbox{graph}
\end{array}
$
&
$
\begin{array}{c}
\includegraphics[height=.75cm]{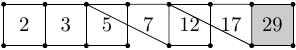}
\end{array}
$
&  
$
\begin{array}{c}
\includegraphics[height=1.1cm]{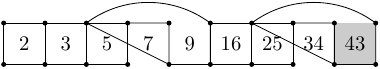}
\end{array}
$
\\
\hline
$
\begin{array}{c}
\hbox{Euclidean}
\\
\hbox{realisation}
\end{array}
$
&
$
\begin{array}{c}
\includegraphics[height=2cm]{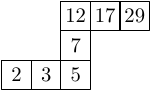}
\end{array}
$
&  
$
\begin{array}{c}
\includegraphics[height=3.5cm]{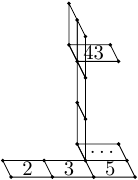}
\end{array} 
$
\\
\hline
\end{tabular}
\vspace{2mm}
\caption{Comparison between Cohn and Non-Cohn matrices and their geometric realisations. On the left, the Cohn matrix represents the Markov number $29$; the corresponding Euclidean realisation is in $\r^2$ with perfect matching number $29$. On the right, the non-Cohn matrix represents the algebraic Markov number $43$; the corresponding Euclidean realisation is in $\r^3$ with perfect matching number~$43$; see Example~\ref{example-3d}.}
\label{tab:cohn_comparison} % Always put \label AFTER \caption
\end{table}

\noindent
\textbf{Further discussions on lattice realisations of wug-snake graphs.} We make the first steps in the study of lattice realisations of wug-snake graphs. In the classical
two-dimensional case, the slope of a snake graph recovers the Farey
coordinate of the corresponding Markov number. We extend this idea to
weighted and higher-dimensional settings, where rational directions are
replaced by multidimensional Farey data and by slowly increasing sequences
of cubes; see Section \ref{Lattice realisation of wug-snake graphs}. These constructions lead to several open questions about additive
structures for higher-dimensional snake-like lattice configurations.

\subsection{Related works}
\subsubsection{Markov numbers}
Markov spectrum is closely related to Diophantine
approximation and to regular continued fractions. Its part below $3$ is discrete, while the part above $3$ has a rich structure of limit points; see
e.g.~\cite{Cusick1989}.
The discrete part gives rise to the classical Markov numbers that are positive integer solutions of the Markov equation. Following the work by F.~Frobenius related~\cite{fro1913}, H.~Cohn linked
Markov numbers to special matrices in $\SL(2,\z)$, now called Cohn matrices~\cite{Cohn1955}.

\vspace{1mm}

A multidimensional version of the Markov spectrum was introduced by
H.~Davenport in his study of cubic forms; he computed the first few Markov
minima in dimension three~\cite{Davenport1941,Davenport1943,Davenport1947}.
The multidimensional theory is also connected to the Littlewood conjecture,
as shown by J.W.S.~Cassels and H.P.F.~Swinnerton-Dyer~\cite{CasselsSwinnertonDyer1955}.

\vspace{2mm}

More recently, Markov numbers have appeared in the theory of quiver
representations and cluster algebras; see, for example, the works of
R.~Schiffler~\cite{Sch2014,Schiffler2024} and the foundational paper of
S.~Fomin and A.~Zelevinsky~\cite{FZ2002}. Generalised Markov numbers from
cluster-algebraic constructions were studied by E.~Banaian and A.~Sen~\cite{BS2024}.
Quantised Markov numbers were introduced by S.~Evans, P.~Jouteur,
S.~Morier-Genoud, and V.~Ovsienko~\cite{EJ2025}. Further related developments
include the connection with Jones polynomials discovered by K.~Lee and
R.~Schiffler~\cite{KS2019}, and the study of generalised Vieta polynomials by
S.~Evans, A.~Veselov, and B.~Winn~\cite{EVW2025}.

\subsubsection{Snake graphs and perfect matchings}
Perfect matchings on square grids have a long history, going back to the
dimer model studied by P.W.~Kasteleyn~\cite{Kasteleyn1961} and by H.N.V.~Temperley and M.E.~Fisher~\cite{FT1961}. In the setting of Markov numbers, the relevant graphs are special snake graphs, or domino
graphs. M.~Aigner~\cite{Aigner2013}, building on ideas of H.~Cohn~\cite{Cohn1991} and J.~Propp~\cite{Propp1999},
explained a remarkable correspondence: the number of perfect matchings of
certain domino snake graphs recovers the classical Markov numbers.

\vspace{2mm}

Snake graphs have since become a standard combinatorial bridge between
continued fractions, cluster algebras, and Markov-type phenomena. An
important method for reading continued fractions from snake graphs was
developed by \.{I}.~\c{C}anak\c{c}\i\ and R.~Schiffler
\cite{CS2013,CS2020}. Perfect matchings of weighted snake graphs were
studied by G.~Musiker~\cite{Mus2008}, and later by  S.~Evans, P.~Jouteur, S.~Morier-Genoud, and V.~Ovsienko~\cite{EJ2025}, and by J.-C.~Aval and
S.~Labb\'e~\cite{AL2026}. Higher dimer covers on snake graphs were investigated
by G.~Musiker et al.~\cite{MOS2026}, and related octagonal snake graphs were
studied by A.~Ciliberti~\cite{AC2025}.

\subsection{Organisation of the paper}
In Section~\ref{Markov minima and Markov spectrum}, we start with the classical
theory of Markov minima, Markov-Davenport forms. We also introduce the algebraic and geometric Markov numbers
of a matrix.

\vspace{1mm}

In Section~\ref{Basic properties of discrete Markov spectrum}, we review
the classical links among Markov numbers, Farey indexing, Cohn matrices, and
perfect matchings of domino snake graphs. This section provides the
classical model that motivates the later constructions.

\vspace{1mm}

In Section~\ref{Generalised universal weighted snake graph}, we define
wug-snake graphs and prove their basic recurrence properties.
In particular we introduce continuant
matrices for wug-snake graphs. 
Further we define
heads, bodies, polyomino wug-tiles, and wug-snake determinants, and we prove
the connection with Markov-Davenport forms. 
After that we show that every matrix of size $m$ is decomposable into a product of lower companion matrices and hence it can be represented by a wug-tile of size $m$. 
Finally we give
a brief discussion of possible extensions to face matchings in
CW-complexes.

\vspace{1mm}

In Section~\ref{Markov numbers of semigroups: definitions and examples}, we
develop the theory of Markov numbers for matrix semigroups. We define
Farey-type tessellations, Cohn maps, algebraic and geometric Frobenius maps,
and Markov semigroups. We then discuss several examples, including the
classical Markov semigroup, semigroups with PLLS-sequences $(a,a)$ and $(b,b)$,
and a semigroup with three generators.

\vspace{1mm}

In Section~\ref{Lattice realisation of wug-snake graphs}, we study lattice
realisations of wug-snake graphs. We discuss slopes in the classical
two-dimensional case, higher-dimensional analogues, and slowly increasing
sequences of cubes.

%\section{Preliminaries}\label{Preliminaries}

%We start in Subsection~\ref{Perfect matchings for bipartite graphs} by recalling the notions of permanents of matrices and perfect matchings of graphs, highlighting a key property involved in counting the perfect matchings of a bipartite graph. Next, in Subsection~\ref{section1.2}, we review a classical theorem connecting Markov numbers to the perfect matchings of domino graphs, introducing all the relevant concepts required to prove this theorem.

%Further, in Subsection~\ref{Matrix representation of continued fractions}, we explore the matrix representation of classical continued fractions, both as a product of $2 \times 2$ matrices and in terms of determinants of $n \times n$ matrices. We recall the notion of reduced $2 \times 2$ integer matrices in Subsection~\ref{Semigroup of reduced GL2Z matrices}. In Subsection~\ref{Matrix representation of recurrence sequences and corresponding subtractive Euclidean algorithms}, we examine the matrix representation of recurrence sequences. Finally, in Subsection~\ref{Cyclic subtractive algorithms}, we briefly discuss cyclic subtractive algorithms in $\mathbb{R}^3$ and relate them to their corresponding semigroups.

\section{Markov minima and Markov spectrum}
\label{Markov minima and Markov spectrum}

In this section we briefly recall the classical background on Markov minima and
Markov-Davenport forms.

\subsection{Markov minima of indefinite quadratic forms}
\label{Markov minima of decomposable quadratic forms}

The classical theory begins with indefinite binary quadratic forms with real coefficients.
\begin{definition}
Let $f(x, y) = ax^2 + bxy + cy^2$ be a binary quadratic form with discriminant $\Delta(f) = b^2 -4ac>0$. Equivalently, \(f\) is nondegenerate and indefinite. The \textit{Markov minimum} of \(f\) is defined by
\[
m(f)=\inf_{(x,y)\in\mathbb Z^2\setminus\{(0,0)\}} |f(x,y)|.
\]
For forms with $m(f)>0$, the scale-invariant quantity
\[
\mu(f)
    =
    \frac{\sqrt{\Delta(f)}}{m(f)},
\]
is called the
\textit{Markov value} of \(f\). The \textit{Markov spectrum}
is the set of all values \(\mu(f)\), as \(f\) ranges over binary quadratic forms with \(\Delta(f)>0\) and $m(f)>0$.
\end{definition}

For more details on the Markov spectrum we refer to~\cite{Cusick1989}.

\subsection{Markov-Davenport forms}
\label{Markov-Davenport forms}

Markov minima and Markov values can also be studied through matrices.
Throughout this paper, vectors are regarded as column vectors.

\begin{definition}\label{def: markov davenport 2d}
Let $A \in \Mat(2,\r)$. The \textit{Markov-Davenport form} of $A$ is
\[
    f_A(x,y)=\det(v,A v),
    \qquad
    v=(x,y)^T.
\]
\end{definition}

Explicitly, if $A=
    \begin{psmallmatrix}
    a & b\\
    c & d
    \end{psmallmatrix}$, then
\[
    f_A(x,y)
    =
    \det
    \begin{pmatrix}
    x & ax+by\\
    y & cx+dy
    \end{pmatrix}
    =
    cx^2+(d-a)xy-by^2.
\]

\begin{remark}
The discriminant of \(f_A\) is
\[
    \Delta(f_A)
    =(d-a)^2+4bc
    =(\operatorname{tr}A)^2-4\det A
    =\operatorname{Disc}(\chi_A).
\]
Consequently, \(f_A\) is nondegenerate and indefinite if and only if
\(A\) has two distinct real eigenvalues.

Under this hypothesis, the equation \(f_A=0\) defines the union of the
two eigenlines of \(A\). Equivalently, \(f_A\) is a product of two
linear forms, one vanishing on each eigenline.
\end{remark}

\begin{remark}
Conversely, suppose that
\(f=C\ell_1\ell_2\), where \(C\neq0\) and the linear forms
\(\ell_1,\ell_2\) are nonproportional. If \(A\) has the two lines
\(\ker\ell_1\) and \(\ker\ell_2\) as eigenspaces, with distinct
eigenvalues, then \(f_A\) is a nonzero scalar multiple of \(f\).
\end{remark}

The same construction has a natural multidimensional analogue. 

\begin{definition}\label{def: markov davenport nd}
Let $A \in \Mat(k,\r)$. The \textit{Markov--Davenport form} of
$A$ is
\[
    f_A(x_1,\ldots,x_k)
    =
    \det(v,Av,\ldots,A^{k-1}v),
    \qquad
    v=(x_1,\ldots,x_k)^T.
\]
\end{definition}
\begin{remark}
No spectral assumption on \(A\) is imposed in this definition. The form
\(f_A\) is a homogeneous polynomial of degree \(k\), and
\(f_A\not\equiv0\) if and only if there exists \(v\in\mathbb R^k\) such
that $v,Av,\ldots,A^{k-1}v$ are linearly independent.

If \(A\) has \(k\) distinct real eigenvalues
\(\lambda_1,\ldots,\lambda_k\), let
\(S=(u_1\,\cdots\,u_k)\) be a matrix of corresponding eigenvectors and
write
\[
    S^{-1}v=(\ell_1(v),\ldots,\ell_k(v))^T.
\]
Then
\[
    f_A(v)
    =
    \det(S)
    \prod_{1\leq i<j\leq k}(\lambda_j-\lambda_i)
    \prod_{i=1}^k\ell_i(v).
\]
Thus, under this hypothesis, \(f_A\) is a product of \(k\) independent
real linear forms.
\end{remark}

The three-dimensional minimum problem for products of real linear
forms was studied by H.~Davenport
\cite{Davenport1941,Davenport1943,Davenport1947}.

\subsection{Algebraic and geometric Markov numbers}
\label{Algebraic and geometric Markov numbers}

We now introduce two numerical quantities attached to a matrix.

\begin{definition}\label{alg-geom}
Let $M\in \Mat(n,\r)$, and let $f_M$ be its
Markov--Davenport form.
\begin{itemize}
    \item The \textit{geometric Markov number} of $M$, denoted by $m(M)$, is
    \[
        m(M)= \inf_{p \in \mathbb{Z}^n \setminus \{0\}} |f_M(p)|.
    \]
    \item The \textit{algebraic Markov number} of $M$, denoted by $\hat{m}(M)$, is
    \[
        \hat{m}(M) = |f_M(0, \ldots, 0, 1)|.
    \]
\end{itemize}
\end{definition}

By definition,
\[
    m(M)\leq \hat m(M),
\]
but equality may not always hold.
%For Markov reduced $2\times2$ matrices, they agree by definition.
\begin{remark}
If \(M\in\Mat(n,\z)\), then \(f_M\) is integer-valued on
\(\mathbb Z^n\). Consequently, the infimum defining \(m(M)\) is
attained, so it may be replaced by a minimum, and $ m(M)\in\mathbb Z_{\geq0}.$

Moreover, $m(M)>0$ if and only if $\chi_M$ is irreducible over $\mathbb Q$. Indeed, for \(p\in\mathbb Z^n\setminus\{0\}\), one has \(f_M(p)=0\)
if and only if $\operatorname{span}_{\mathbb Q}
    \{p,Mp,\ldots,M^{n-1}p\}$ is a proper \(M\)-invariant rational subspace. Such a subspace exists
if and only if \(\chi_M\) is reducible over \(\mathbb Q\).
\end{remark}
\begin{remark}
Suppose that \(M\) has \(n\) distinct real eigenvalues. If $f_M=L_1\cdots L_n$ is its factorization into real linear forms and \(B\) is the matrix
whose \(i\)-th row is the coefficient vector of \(L_i\), then the
\emph{decomposable-form discriminant} is $ D(f_M)=(\det B)^2.$ This definition is independent of the ordering and normalization of
the factors. For the Markov--Davenport form one has
\[
    D(f_M)
    =
    \operatorname{Disc}(\chi_M)
    =
    \prod_{1\leq i<j\leq n}
    (\lambda_i-\lambda_j)^2.
\]
The corresponding scale-invariant homogeneous minimum is therefore $\frac{m(M)}{\sqrt{D(f_M)}}.$ When \(m(M)>0\), its reciprocal is the associated Markov value. This is the normalization appearing
in the classical multidimensional theory of Davenport; see e.g. \cite{Davenport1941,Davenport1943,Davenport1947}.
\end{remark}

\begin{remark}
The algebraic Markov number is an explicit algebraic expression in the
entries of $M$. In the case $n=2$, if $M=
    \begin{psmallmatrix}
    a & b\\
    c & d
    \end{psmallmatrix}$
then
    $\hat m(M)=|f_M(0,1)|=|b|$.
%Thus, for reduced matrices with $b>0$, the algebraic Markov number is the
%upper-right entry.
\end{remark}

\begin{remark}\label{Non-sym-rem}
The geometric Markov number is invariant under integer change of basis.
More precisely, if $U\in\GL(n,\z)$, then $f_{U^{-1}MU}(v)
    =
    \det(U)^{-1}f_M(Uv).$ Since \(\det(U)=\pm1\) and \(U\) permutes the nonzero points of
\(\mathbb Z^n\), it follows that $m(U^{-1}MU)=m(M)$. 

In contrast, the algebraic Markov number is generally
basis-dependent, since $ \hat m(U^{-1}MU)
    =
    |f_M(Ue_n)|.$ For example, let $M=
    \begin{psmallmatrix}
        0&1\\
        1&3
    \end{psmallmatrix}$ and $U=
    \begin{psmallmatrix}
        1&1\\
        0&1
    \end{psmallmatrix}$. Then $U^{-1}MU
    =
    \begin{psmallmatrix}
        -1&-3\\
        1&4
    \end{psmallmatrix}$
and hence $\hat m(M)=1$ and $\hat m(U^{-1}MU)=3.$
\end{remark}

\begin{remark}\label{Non-sym-rem}
In dimension two, the geometric Markov number is also invariant under
transposition. Indeed, if $R=
    \begin{psmallmatrix}
        0&1\\
        -1&0
    \end{psmallmatrix},$ then $f_{A^T}(v)=-f_A(Rv),$
or equivalently, $f_{A^T}(x,y)=-f_A(y,-x).$
Since \(R\in\GL(2,\z)\), it follows that $ m(A^T)=m(A).$

The algebraic Markov number is generally not invariant under
transposition: if $A=
    \begin{psmallmatrix}
        a&b\\
        c&d
    \end{psmallmatrix},$
then $\hat m(A)=|b|$ and $ \hat m(A^T)=|c|.$
\end{remark}

\section{Basic properties of discrete Markov spectrum}\label{Basic properties of discrete Markov spectrum}
Let us continue with the classical correspondences between Markov numbers,
Farey fractions, Cohn matrices, and perfect matchings of domino snake graphs.
We use these basic facts for the
wug-snake graph theory developed in the following sections.

\subsection{Discrete Markov spectrum and Markov equation}\label{subsection: Discrete Markov spectrum and Markov equation}
The Markov spectrum has a rich structure. In particular, it is discrete below 3. Its smallest three elements are as follows:
\begin{align*}
\sqrt{5} &= 1 + [0; (1)] + [0; (1)]; \\
\sqrt{8} &= 2 + [0; (2)] + [0; (2)]; \\
\frac{\sqrt{221}}{5} &= 2 + [0; (2:1:1:2)] + [0; (1:1:2:2)].
\end{align*}
Here $[0;(a_1:\cdots:a_r)]
    :=
    [0;\overline{a_1,\ldots,a_r}]$ denotes the purely periodic continued fraction with period
\((a_1,\ldots,a_r)\).
The Markov spectrum below 3 was described by A.~Markov in the following theorem from 1879.

\begin{theorem}[A. Markov {\cite{Markoff1880}}]\label{thm: markov 1879}
\begin{enumerate}[label=\textup{(\roman*)}]
    \item The Markov spectrum below 3 consists of the numbers $\frac{\sqrt{9m^2 - 4}}{m}$, where $m$ is a positive integer such that
    \[
    m^2 + m_1^2 + m_2^2 = 3mm_1m_2, \qquad m_2 \le m_1 \le m,
    \]
    for some positive integers $m_1$ and $m_2$.
    \item Let \((m,m_1,m_2)\) satisfy the conditions in\/ \textup{(i)}. If \(m=1\), set \(u=0\). If \(m>1\), let \(u\) be the unique integer
satisfying
    \[
    0<u\leq \frac{m}{2},
    \qquad
    m_2u\equiv\pm m_1\pmod m.
\]
Define \(v\in\mathbb Z\) by
\[
    u^2+1=vm.
\]
Then the quadratic form
\[
    f_{m;m_1,m_2}(x,y)
    =
    mx^2+(3m-2u)xy+(v-3u)y^2
\]
satisfies $\mu(f_{m;m_1,m_2})=\frac{\sqrt{9m^2 - 4}}{m}$, confirming that $\frac{\sqrt{9m^2 - 4}}{m}$ belongs to the Markov spectrum. 
\end{enumerate}

\end{theorem}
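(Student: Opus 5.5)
The plan is to follow the classical route through Perron's continued-fraction formula for the Markov value, treating (ii) first because it supplies the explicit forms used in (i). For a form $f$ with $\Delta(f)>0$ and $m(f)>0$, Perron's formula expresses $\mu(f)$ as
\[
\mu(f)=\sup_{i\in\z}\bigl(a_i+[0;a_{i+1},a_{i+2},\ldots]+[0;a_{i-1},a_{i-2},\ldots]\bigr),
\]
where $(a_i)_{i\in\z}$ is the doubly infinite sequence of partial quotients attached to the two roots of $f(x,1)=0$. The three displayed elements $\sqrt5,\sqrt8,\sqrt{221}/5$ are exactly this formula evaluated at the periods $(1)$, $(2)$ and $(2,2,1,1)$. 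So everything reduces to a statement about bi-infinite words in the partial quotients, and I will cite \cite{Cusick1989} and \cite{Aigner2013} for the routine parts of this dictionary.

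\textbf{Part (ii).} First I compute the discriminant directly, using $u^2+1=vm$:
\[
(3m-2u)^2-4m(v-3u)=9m^2-12mu+4u^2-4(u^2+1)+12mu=9m^2-4.
\]
Since $f_{m;m_1,m_2}(1,0)=m$, we get $m(f)\le m$, hence $\mu(f)\ge\sqrt{9m^2-4}/m$. For the reverse inequality I would identify $f_{m;m_1,m_2}$, up to $\GL(2,\z)$-equivalence and sign, with the Markov--Davenport form $f_C$ of the Cohn matrix $C$ attached to the triple $(m,m_1,m_2)$. This matrix has upper-right entry $m$ and trace $3m$, and the congruence defining $u$ is precisely the condition that puts $f_C$ into this normal form. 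The period of the continued fraction of the roots of $f_C$ is the Christoffel-type word in the blocks $(1,1)$ and $(2,2)$ given by the Farey index of $m$. The bound $|f(p)|\ge m$ then follows from the fact that, by Perron's formula, the supremum is attained at the position where the word is ``most balanced''. There it equals $\sqrt{9m^2-4}/m$, which one can check via the Cohn-matrix identity $\tr C=3m$ together with $\det C=1$.

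\textbf{Part (i).} There are two inclusions. One is (ii). For the other, take $f$ with $\mu(f)<3$. The steps are:
\begin{enumerate}[label=\textup{(\arabic*)}]
\item Show that every partial quotient $a_i$ is $1$ or $2$; any $a_i\ge3$ already forces the sum above $3$.
\item Show that the pattern $1,2,1$ and the pattern $2,1,2$ are excluded. This means the word splits into blocks $11$ and $22$.
\item Prove the key combinatorial lemma: a bi-infinite word in the blocks $11$ and $22$ whose Perron supremum is $<3$ must be periodic and balanced, i.e.\ a Christoffel (Sturmian periodic) word. This is done by induction, renaming blocks to reduce the word to a shorter one of the same type, which mirrors the Markov tree move $(m,m_1,m_2)\mapsto(3mm_1-m_2,m,m_1)$.
\item Conclude that the period is the word of some Markov number $m$. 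By the computation in (ii), $\mu(f)=\sqrt{9m^2-4}/m$, and $m$ lies in a triple with $m_2\le m_1\le m$.
\end{enumerate}
Non-periodic or unbalanced words either give a Perron supremum $\ge3$, or only produce values accumulating at $3$. These are excluded by the strict inequality.

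I expect the main obstacle to be step (3). One must show that any imbalance in the block word pushes the Perron sum to at least $3$, not merely above every value already listed. This requires sharp comparison of continued fractions with alternating monotonicity in the depth of the first disagreement. I would first try to carry this out with Bombieri's approach to Markov's theorem, comparing the two tails around an unbalanced factor of the form $x\,11\,y$ versus $x\,22\,y$. Failing that, I would fall back on Cohn's matrix argument in \cite{Cohn1955}, which turns the induction into a statement about products of the two generating Cohn matrices.
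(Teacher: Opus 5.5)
The paper does not prove this theorem; it quotes it from Markov. Your architecture is the classical continued-fraction route: Perron's formula, reduction to partial quotients $1,2$, block decomposition, balanced words, and the Cohn correspondence. So the only question is whether your outline closes. Two steps, as written, do not.

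\textbf{Step (2) of (i).} Excluding $1\,2\,1$ and $2\,1\,2$ only shows that every maximal run of $1$'s or $2$'s has length at least $2$. A splitting into blocks $1\,1$ and $2\,2$ needs every finite run to have \emph{even} length.

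The periodic word $\overline{2\,2\,2\,1\,1}$ avoids both patterns but is not a block word. It therefore has to be excluded by a separate argument. Indeed, its Perron sum at the first $2$ of each run is $2+[0;2,2,1,1,\ldots]+[0;1,1,2,2,2,\ldots]\approx 3.0046$.

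The missing lemma is that $\mu(f)<3$ rules out $1\,2^{2k+1}\,1$ and $2\,1^{2k+1}\,2$. This is where the alternating tail comparison first appears. Write $\lambda_i=a_i+[0;a_{i+1},\ldots]+[0;a_{i-1},\ldots]$ for the Perron sum. At a position with $a_i=2$ and $a_{i-1}=1$, the identity $[0;1,c]=1-[0;c+1]$ turns $\lambda_i\le 3$ into $[0;a_{i+1},a_{i+2},\ldots]\le[0;a_{i-2}+1,a_{i-3},\ldots]$; the mirror-image identity applies when $a_{i+1}=1$. Applying this at the end of an odd run:
\begin{itemize}
\item it kills $1\,2\,2\,2\,1$ and $2\,1\,1\,1\,2$ outright;
\item for longer odd runs, it forces a shorter odd run nearby, so induction on run length finishes the lemma.
\end{itemize}
Your step (3) presupposes this lemma.

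\textbf{The lower bound in (ii).} The inequality $|f_{m;m_1,m_2}(p)|\ge m$ is asserted, not proved. Perron's formula gives $\mu(f)=\sqrt{9m^2-4}/\min_i|f_i(1,0)|$, the minimum taken over the reduced forms $f_i$ in the cycle of $f$. So the Perron sum at the position where $|f_i(1,0)|=m$ is automatically $\sqrt{9m^2-4}/m$. Checking this via $\tr C=3m$ and $\det C=1$ adds nothing to your discriminant computation.

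The real content of (ii) is that every \emph{other} shift of the periodic word has Perron sum at most this value. Equivalently, no reduced form in the cycle has leading coefficient of absolute value below $m$. Your sentence ``the supremum is attained where the word is most balanced'' is exactly this statement; it is the theorem, not a consequence of Perron's formula. Proving it needs comparisons of the same kind as your step (3). These must use the palindromic structure of the Christoffel period to compare each shift with the distinguished one.

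A smaller point: identifying $f_{m;m_1,m_2}$ with $f_C$ needs the upper-left entry $a$ of the Cohn matrix to satisfy $a\equiv\pm u\pmod m$. That also requires an induction along the Farey tree.

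Since step (4) of (i) invokes (ii), part (i) inherits this gap as well.
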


\begin{remark}
Note that any form representing an element of the Markov spectrum below 3 is integer congruent to $f_{m;m_1,m_2}$ for some solution of Markov equation $(m,m_1,m_2)$.
\end{remark}
 
\begin{definition}
The Diophantine equation 
$$%\label{eq: markov eq}
  x_1^2+x_2^2 + x_3^2 = 3 x_1x_2x_3,
$$
is called \textit{Markov equation}. 
Its positive integer solutions
$(x_1,x_2,x_3)$ are called \textit{Markov triples}. 
The integers that occur
in such triples are called \textit{Markov numbers}. 
We denote the set of
Markov numbers by $\mathcal{M}$.
\end{definition}

The first Markov triples, considered up to permutation, are
\[
(1,1,1),\quad (1,1,2),\quad (1,2,5),\quad
(1,5,13),\quad (2,5,29),\quad (1,13,34),
\]
and hence
\[
\mathcal M=\{1,2,5,13,29,34,\ldots\}.
\]

\begin{remark}
Throughout this paper, the term \emph{Markov number} refers to a positive
integer occurring in a Markov triple, whereas \(m(f)\) denotes the
Markov minimum of a quadratic form~\(f\). For a Markov form \(f_{m;m_1,m_2}\)  from Markov's theorem
associated with a Markov triple \(m,m_1,m_2\), these quantities coincide
numerically, $ m(f_{m;m_1,m_2})=m.$
\end{remark}

\subsection{Markov tree and Farey indexing}
\label{Markov tree and Farey indexing}

Note that the set of Markov triples is generated by the classical Vieta
involution. Namely, if $(m_1,m_2,m_3)$ is a Markov triple, then replacing
$m_1$ by $m_1'=3m_2m_3-m_1$ again gives a Markov triple. This operation generates the Markov tree.

Note that a Markov triple is called \textit{nonsingular} if it does not have repeating numbers.
There are only two singular Markov triples: $(1,1,1)$ and $(1,1,2)$.

\begin{definition}[Markov tree]
  The \textit{Markov tree} $T_M$ is an infinite rooted binary tree. Its nodes are labeled with nonsingular Markov triples. The root is $(1, 5, 2)$. Every node $(\ell, m, r)$ branches to a left child and a right child:
  \[
      \text{Left child: } (\ell, 3\ell m - r, m) \qquad \text{Right child: } (m, 3mr - \ell, r).
  \]
\end{definition}
\begin{remark}
By \cite[Theorem 3.3]{Aigner2013}, every nonsingular Markov triple occurs
exactly once in $T_M$.
\end{remark}

The Markov tree is naturally indexed by the Farey tree. Recall that the
{\it Farey sum}, or {\it mediant}, of two rational numbers is
\[
    \frac ab\oplus\frac{a'}{b'}
    =
    \frac{a+a'}{b+b'}.
\]

Let $\q_{0,1}$ denote the set of rational numbers in the interval $[0,1]$.
\begin{definition}[Farey tree]
The \textit{Farey tree} $T_F$ is a binary tree labeled with Farey triples of rational numbers in $\q_{0,1}$. The root is $(\frac{0}{1}, \frac{1}{2}, \frac{1}{1})$. A parent node $(\frac{a}{b}, \frac{c}{d}, \frac{a'}{b'})$ branches to a left child and a right child using the mediant rule:
  \[
      \text{Left child: } \left(\frac{a}{b}, \frac{a+c}{b+d}, \frac{c}{d}\right) \qquad \text{Right child: } \left(\frac{c}{d}, \frac{c+a'}{d+b'}, \frac{a'}{b'}\right).
  \]
  \end{definition}
  \begin{remark}
  Every rational number $t \in \q_{0,1}$ appears exactly once as a central element in $T_F$, see \cite[Theorem 3.9]{Aigner2013}. This structure provides a well-defined indexing of the Markov numbers.
  \end{remark}

Comparing the Farey tree with the
Markov tree gives the classical {\it Frobenius map}
\[
    \phi:\mathbb Q_{0,1}\to \mathcal M,\qquad t\mapsto m_t.
\]
For example,
\[
    m_{0/1}=1,\qquad m_{1/1}=2,\qquad m_{1/2}=5.
\]
The rational number $t$ is called the \textit{Farey index} of $m_t$.

\subsection{Cohn matrices}
\label{subsec:cohn-matrices}
H.~Cohn gave a matrix realization of the Farey indexing of Markov numbers.
The starting point is the Fricke trace identity
$$
%  \begin{equation}\label{eq: fricke}
    \tr(A)^2 + \tr(B)^2 + \tr(AB)^2 = \tr(A)\tr(B)\tr(AB) + \tr(ABA^{-1}B^{-1}) + 2
%  \end{equation}
$$
valid for $A,B\in\SL(2,\mathbb Z)$. When the commutator trace is $-2$, this identity reproduces the Markov equation after dividing traces by $3$.
We only need the following consequence of Cohn's
construction.

\begin{theorem}[Cohn matrices; see {\cite{Cohn1955,Aigner2013}}]
There exists a family of matrices
\[
    C_t=
    \begin{pmatrix}
        a_t & m_t\\
        c_t & 3m_t-a_t
    \end{pmatrix}
    \in \SL(2,\mathbb Z),
    \qquad t\in\mathbb Q_{0,1},
\]
such that $m_t$ is the Markov number with Farey index $t$. Moreover, if
$t=r\oplus s$, then the corresponding matrices can be chosen so that $C_t=C_rC_s.$
\end{theorem}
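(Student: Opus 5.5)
The plan is to build the family $C_t$ recursively along the Farey tree, and to show by induction that every triple in that tree is sent to a triple of matrices whose upper-right entries form the matching node of the Markov tree. At the base I take
\[
C_{0/1}=\begin{pmatrix}1&1\\1&2\end{pmatrix},\qquad
C_{1/1}=\begin{pmatrix}3&2\\4&3\end{pmatrix},\qquad
C_{1/2}=C_{0/1}C_{1/1}=\begin{pmatrix}7&5\\11&8\end{pmatrix}.
\]
These lie in $\SL(2,\z)$, have upper-right entries $1,2,5$, and satisfy $\tr C_t=3m_t$: indeed $3=3\cdot1$, $6=3\cdot2$, $15=3\cdot5$. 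For $t=r\oplus s$ with $r<s$ Farey neighbours, I define $C_t:=C_rC_s$. The Farey-tree remark guarantees that each $t\in\q_{0,1}$ is the centre of exactly one node, so $C_t$ is well defined. The product rule then holds by construction.

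The core of the argument is an inductive invariant on each Farey triple $(r,t,s)$ with $t=r\oplus s$. The invariant has three parts:
\begin{enumerate}[label=(\alph*)]
\item each of $C_r,C_s,C_t$ has the Cohn shape $\begin{pmatrix}a&m\\c&3m-a\end{pmatrix}$ and lies in $\SL(2,\z)$;
\item the commutator satisfies $\tr(C_rC_sC_r^{-1}C_s^{-1})=-2$;
\item the upper-right entries $(m_r,m_t,m_s)$ form the node of $T_M$ corresponding to $(r,t,s)$.
\end{enumerate}
Given (a) and (b), the Fricke identity gives $x^2+y^2+z^2=xyz$ for $x=\tr C_r$, $y=\tr C_s$, $z=\tr C_t$. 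Substituting $x=3m_r$, $y=3m_s$, $z=3m_t$ yields the Markov equation. For (b) in the inductive step, I note that the commutator of $C_r$ with $C_rC_s$ is conjugate to the commutator of $C_r$ with $C_s$, and similarly on the other side. Hence the commutator trace is the same for the pairs $(C_r,C_t)$ and $(C_t,C_s)$ as for $(C_r,C_s)$, so it stays $-2$ at every step.

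For the shape condition (a), the trace part is immediate. By Fricke with commutator trace $-2$, $\tr(C_rC_s)$ is a root of a quadratic in $z$, and the Vieta partner of $\tr(C_rC_s)$ is $\tr(C_rC_s^{-1})$. The condition that is not automatic is that the trace equals $3$ times the upper-right entry, i.e. $a+d=3b$ for the product. I would handle this with an additional linear invariant. Each base matrix satisfies the relation $c=a+d-\ldots$ in the sense that the lower-left entry is an affine combination of $a$ and $b$: for $C_{0/1}$ and $C_{1/1}$ one checks $c=2a-b$... To keep this robust, I would instead conjugate by a fixed matrix. Writing $C_t=PN_tP^{-1}$ with $N_t$ lying in a semigroup where "$\tr=3\cdot$(upper-right)" is a linear functional identity preserved under multiplication, the Fricke argument controls traces and the linear invariant controls the upper-right entry. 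I expect this to be the main obstacle: one must identify a linear functional $\ell$ on $2\times2$ matrices with $\ell(C)=\tr C-3C_{12}$ vanishing on all products. The way through is to check that it holds for the pairs $C_rC_s$ using $C_rC_s+C_rC_s^{-1}\cdots$, via the Cayley–Hamilton identity $C_s+C_s^{-1}=(\tr C_s)I$. Concretely, $C_rC_s = (\tr C_s)\,C_r - C_rC_s^{-1}$, and induction on the tree gives both the parent product and the Vieta partner, so $\ell$ vanishes by linearity.

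Finally, for (c), the same identity $C_rC_s=(\tr C_s)C_r-C_rC_s^{-1}$ turns, on upper-right entries, into the Vieta move $m'=3m_rm_s-m_{\text{old}}$ from the Markov tree's child rules. Since $C_rC_s^{-1}$ is the matrix at the grandparent vertex, the upper-right entries follow the left and right child rules of $T_M$. Matching the roots $(1,5,2)\leftrightarrow(0/1,1/2,1/1)$ then identifies $m_t$ with the Markov number of Farey index $t$. This completes the induction.
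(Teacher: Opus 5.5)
The paper gives no proof of this statement; it only cites Cohn and Aigner. Your overall strategy is the standard one: define $C_t=C_rC_s$ along the Farey tree, carry the linear functional $\ell(X)=\tr X-3X_{12}$, and read off Vieta moves on upper-right entries via Cayley--Hamilton. However, the single identity you use does not close the induction.

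The identity $C_rC_s=(\tr C_s)C_r-C_rC_s^{-1}$ is always valid. Your claim that $C_rC_s^{-1}$ is the matrix at the grandparent vertex, however, holds only when $r$ is the more recently created endpoint, i.e.\ $C_r=C_{r'}C_s$. That is exactly the case of right children. For a left child $t=r\oplus s$, the newer endpoint is $s$, with $C_s=C_rC_{s'}$. Then $C_rC_s^{-1}=C_rC_{s'}^{-1}C_r^{-1}$ is a conjugate of $C_{s'}^{-1}$, not a vertex matrix. Already for $t=1/3=0/1\oplus 1/2$ you get $C_{0/1}C_{1/2}^{-1}=\begin{psmallmatrix}-3&2\\-14&9\end{psmallmatrix}$, which has negative entries and so is not in the tree. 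Your induction hypothesis gives no control over $\ell$ or the upper-right entry of such a matrix. So for left children neither $\ell(C_t)=0$ nor $m_t=3m_rm_s-m_{s'}$ is established; both happen to be true, but your argument does not show it.

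The repair is to apply Cayley--Hamilton to the other factor. For a left child, $C_rC_s=C_r^2C_{s'}=(\tr C_r)\,C_s-C_{s'}$. It is cleaner still to phrase the step at a node $(r,t,s)$ with $C_t=C_rC_s$. Then $C_rC_t=C_r^2C_s=3m_rC_t-C_s$ and $C_tC_s=C_rC_s^2=3m_sC_t-C_r$. By linearity $\ell$ vanishes on both children, and the upper-right entries are exactly $3m_rm_t-m_s$ and $3m_tm_s-m_r$, the left and right child rules of $T_M$. The Vieta partner is now the third vertex of the current node, not a grandparent.

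Once this is in place, part (b) (Fricke and the commutator trace) is redundant, because the entries run through $T_M$, whose nodes are Markov triples. You should also delete the $c=2a-b$ thread, which already fails for $C_{1/2}$ ($11\neq 2\cdot 7-5$), and the never-specified conjugation by $P$. The functional $\ell$ alone handles the Cohn shape.
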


A convenient choice of initial matrices is
\[
    C_{0/1}=
    \begin{pmatrix}
    1&1\\
    1&2
    \end{pmatrix},
    \qquad
    C_{1/1}=
    \begin{pmatrix}
    3&2\\
    4&3
    \end{pmatrix}.
\]

To sum up, in the classical case, the Markov number $m_t$ appears as the
upper-right entry of a matrix product. This multiplicative viewpoint is one
of the main motivations for the semigroup constructions considered later.

\vspace{2mm}

\subsection{Domino snake graphs and their perfect matchings}
\label{subsec:domino-snake-perfect-matchings}

We now recall the combinatorial counterpart of the preceding matrix
construction.

\begin{definition}\label{defi perfect matching}
Let $G=(V,E)$ be a finite graph. A subset $E'\subset E$ is called a {\it perfect matching} if every vertex $v\in V$ is incident to exactly one
edge in $E'$. The number of perfect matchings of $G$ is denoted by $\mu(G)$.
\end{definition}

\begin{example}
Consider the graph
\[
G=
\begin{array}{c}
\includegraphics[height=1cm]{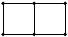}
\end{array}.
\]
Its perfect matchings are precisely
\[
\begin{array}{c}
\includegraphics[height=1cm]{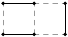}
\end{array}
\qquad
\begin{array}{c}
\includegraphics[height=1cm]{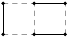}
\end{array}
\qquad
\begin{array}{c}
\includegraphics[height=1cm]{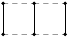}
\end{array},
\]
and hence $\mu(G)=3$.
\end{example}

\begin{definition}
Let $p$ and $q$ be relatively prime positive integers with $p\leq q$. Denote
by $I_{p,q}$ the line segment with endpoints $(0,0)$ and $(q,p)$. The
\textit{snake graph} associated with $I_{p,q}$ is the graph formed by the
vertices and edges of all unit squares in the square lattice that contain
interior points of $I_{p,q}$. We denote this graph by $S(p/q)$.
\end{definition}

The domino snake graph $D(p/q)$ is a bipartite graph constructed from the
snake graph $S(p/q)$ as follows.

\begin{definition}
Let $p$ and $q$ be as above.
\begin{itemize}
    \item Let $V_-$ be the set of lower-right vertices of the unit squares
    in $S(p/q)$.

    \item Subdivide each unit square in $S(p/q)$ into four congruent
    subsquares, which we call \textit{quadrants}.

    \item Select all quadrants that contain a vertex of $V_-$.
\end{itemize}
The graph formed by the vertices and edges of the selected quadrants is
called the \textit{domino snake graph} associated with $p/q$ and is denoted
by $D(p/q)$.
\end{definition}

For further details, see~\cite{Aigner2013}.

\begin{remark}
Although the terminology suggests a domino tiling picture, the number of
quadrants in a domino snake graph is odd.
\end{remark}

\begin{example}
Consider the segment $I_{2,3}$, shown as the bold line in the figure below:
\[
\begin{array}{c}
\includegraphics[height=3cm]{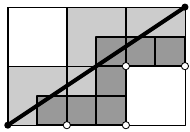}
\end{array}
\]
The snake graph $S(2/3)$ is formed by the vertices and edges of the
light-grey unit squares. In this example, the set $V_-$ consists of four
points, one for each light-grey square; these points are shown in white.
There are seven quadrants of the snake graph containing points of $V_-$.
They are shown in dark grey and form the domino snake graph $D(2/3)$.
\end{example}

The following theorem is the classical bridge between Markov numbers and
perfect matchings.

\begin{theorem}[Aigner {\cite[Theorem 7.12]{Aigner2013}}]
\label{thm:aigner-domino-markov}
For every $t\in\mathbb Q_{0,1}$,
\[
    \mu(D(t))=m_t,
\]
where $m_t$ is the Markov number with Farey index $t$.
\end{theorem}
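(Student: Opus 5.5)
The plan is to prove $\mu(D(t))=m_t$ by transfer matrices, matching the Cohn factorisation $C_t=C_rC_s$ from the theorem on Cohn matrices. First I will associate a $2\times 2$ \emph{transfer matrix} to each domino snake graph. Read $D(p/q)$ from its lower-left end to its upper-right end as a chain of quadrant-tiles glued along edges; each tile is glued to the next one either to the right or upward. For a chain $G$ ending in a tile with free terminal edge $e$, define the state vector
\[
v(G)=\bigl(\mu(G\setminus e\text{-forced}),\ \mu(G)\bigr)^T,
\]
i.e.\ the number of matchings of the chain with the last tile removed, together with the number of matchings of the full chain. This is the standard snake-graph continuant recursion: attaching a tile in the same direction as the previous turn, or in the opposite direction, acts on $v$ by one of the two elementary matrices $\begin{psmallmatrix}0&1\\1&1\end{psmallmatrix}$ or $\begin{psmallmatrix}1&1\\0&1\end{psmallmatrix}$, up to the fixed convention. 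I will verify this by the usual case split on whether the last tile's outer edge belongs to the matching.

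Second, I will show that the graph $D(t)$ decomposes compatibly with the Farey structure. If $t=r\oplus s$ with $r=a/b$ and $s=a'/b'$ Farey neighbours, then the lattice segment $I_{a+a',b+b'}$ passes through the unit squares of a translate of $I_{a,b}$ followed by those of a translate of $I_{a',b'}$. This holds because $\det\begin{psmallmatrix}b&b'\\a&a'\end{psmallmatrix}=\pm1$, so the segment stays within distance less than one square of the broken line through $(b,a)$. Hence the snake $S(t)$ is the concatenation of $S(r)$ and $S(s)$ through one connecting square, and so is the set $V_-$ of lower-right corners. Consequently $D(t)$ is $D(r)$ glued to $D(s)$ by a fixed connector piece. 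Its transfer matrix is therefore $T(D(t))=T(D(r))\,J\,T(D(s))$ for a fixed matrix $J$, or after absorbing $J$ symmetrically, $\widetilde T(t)=\widetilde T(r)\widetilde T(s)$.

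Third, I will compute the base cases $t=0/1$ and $t=1/1$ and check that $\widetilde T(0/1)$ and $\widetilde T(1/1)$ coincide with $C_{0/1}$ and $C_{1/1}$ up to a fixed conjugation, chosen so that the number of perfect matchings is read off as the upper-right entry. Induction on the Farey tree then gives $\widetilde T(t)=C_t$. Reading off the upper-right entry gives $\mu(D(t))=m_t$ by the Cohn theorem.

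The main obstacle is the second step. I need to show that the geometric concatenation $S(r\oplus s)=S(r)\cup(\text{connector})\cup S(s)$ is exact, and in particular that the lower-right vertices near the junction produce the same local quadrant configuration for every Farey pair. I will attack this using the unimodularity of $(r,s)$: the only lattice point strictly between the segment $I_{a+a',b+b'}$ and the broken path is avoided. This pins the junction to a fixed pattern, which is then absorbed into $J$. If the direct geometric argument proves messy, the fallback is to use Christoffel words: the step sequence of $I_{p,q}$ is the Christoffel word $w_{p/q}$, and $w_{r\oplus s}=w_rw_s$ is classical. The domino graph is determined letter by letter by $w_t$, which reduces the junction analysis to a finite check.
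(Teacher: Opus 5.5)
Your outline works, and it is the transfer-matrix mechanism the paper has in mind. The paper gives no proof of its own (it cites Aigner). Its remark after the statement reads the domino graph block by block along the Cohn word, each block acting directly by $C_{0/1}$ or $C_{1/1}$. In the paper's notation, take exactly your state vector $(N_{i-1},N_i)$ with $N_j=\mu(G_1\cup\cdots\cup G_j)$. Attaching $G_{i+1}$ then acts by $M_{1,1}$ if $G_{i-1},G_i,G_{i+1}$ are collinear and by $M_{-1,2}$ otherwise, and $C_{0/1}=M_{1,1}^2$, $C_{1/1}=(M_{-1,2}M_{1,1})^2$ are the factorisations the paper uses later for the Cohn wug-snake graph.

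You organise the same computation recursively. One gluing lemma for $D(r\oplus s)$ gives multiplicativity, and all normalisation goes into a connector $J$ and a final conjugation. The block picture needs neither, but must locate the blocks along the whole Cohn word. Both rest on the same combinatorics, and your Christoffel fallback should be the main argument, since it replaces the imprecise distance estimate:
\begin{itemize}
\item $S(p/q)$ has step word the central word $c_{p/q}$, where $w_{p/q}=x\,c_{p/q}\,y$;
\item $D(p/q)$ is the quadrant chain whose step word is $c_{p/q}$ with every letter doubled;
\item $w_{r\oplus s}=w_rw_s$ gives $c_{r\oplus s}=c_r\,yx\,c_s$, i.e.\ a three-quadrant connector with steps $UURR$.
\end{itemize}

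Three points need repair.

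\textbf{Transfer matrices.} With your state vector a turn gives $N_{i+1}=2N_i-N_{i-1}$, so the matrix is $\begin{psmallmatrix}0&1\\-1&2\end{psmallmatrix}$, not $\begin{psmallmatrix}1&1\\0&1\end{psmallmatrix}$. Since the matrix at a tile depends on its neighbours on both sides, $T(D(r))$ may only involve interior tiles, and the end tiles of $D(r)$ and $D(s)$ must be absorbed into $J$. That $J$ is then independent of $r,s$ uses that $c_r$ begins and ends with $x$ for $r\in(0,1)$, so the junction is always turn, straight, turn, straight, straight. With column vectors the gluing reads $T(s)\,J\,T(r)$, so you must transpose before comparing with $C_rC_s$. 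This is not cosmetic, because $C_{1/1}$ is not symmetric.

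\textbf{Boundary of the Farey tree.} The gluing lemma fails there. $D(0/1)$ is undefined, since the definition requires $p\geq1$. Moreover $c_{(0/1)\oplus s}=x\,c_s$, so $D((0/1)\oplus s)$ is $D(s)$ with \emph{two} quadrants prepended, not a three-quadrant connector. Also $D(1/1)$ is a single quadrant with no interior tiles. So $\widetilde T(0/1)$ and $\widetilde T(1/1)$ cannot be computed from graphs; you must define them as the matrices forced by these degenerate gluing rules. The case $t=0/1$ of the statement is itself only a convention.

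\textbf{The final conjugation.} It is not yours to choose. Since $C_{0/1}$ and $C_{1/1}$ have no common eigenvector, the two base-case conditions determine the conjugator up to a scalar. You must then check that it turns your boundary functional into $X\mapsto X_{12}$.

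These checks do succeed by direct computation, so your argument closes once they are written out.
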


\begin{remark}[Transfer-matrix viewpoint]
The proof of Theorem~\ref{thm:aigner-domino-markov} can be interpreted using
local transfer matrices. As one moves along the domino snake graph, the
numbers of partial perfect matchings form a two-dimensional state vector.
The elementary local moves act on this vector by the matrices
\[
    \begin{pmatrix}
    1&1\\
    1&2
    \end{pmatrix}
    \qquad\text{and}\qquad
    \begin{pmatrix}
    3&2\\
    4&3
    \end{pmatrix}.
\]
Thus the total number of perfect matchings is recovered as the upper-right
entry of the corresponding Cohn matrix. This transfer-matrix mechanism is
precisely what will be generalized below by wug-snake graphs and their
continuant matrices.

The classical picture can therefore be summarized as follows. Markov numbers
are simultaneously:
\begin{itemize}
    \item entries of Cohn matrices indexed by the Farey tree;
    \item perfect matching numbers of domino snake graphs;
    \item values arising from transfer-matrix recurrences.
\end{itemize}
In the remainder of the paper we extend this triangle of ideas. Cohn matrices
are replaced by semigroups of integer matrices, domino snake graphs are
replaced by wug-snake graphs, and the associated transfer recurrences are
encoded by continuant matrices.
\end{remark}

\section{Wug-snake graphs}
\label{Generalised universal weighted snake graph}

In this section we introduce wug-snake graphs and develop their basic
linear-algebraic properties. These graphs are designed so that their weighted
perfect matching numbers satisfy prescribed linear recurrence sequences. We then use
them to construct polyomino wug-tiles for matrices and relate the resulting
determinants to Markov-Davenport forms and to algebraic and geometric Markov
numbers.

The section is organised as follows. In
Subsection~\ref{Definition of wug-snake graphs and their basic properties}
we define wug-snake graphs and state their fundamental recurrence formula;
the proof is given in Subsection~\ref{Continuant and proof}. In
Subsection~\ref{Heads and bodies of wug-snake graphs, their determinants}
we introduce heads, bodies, state vectors, and wug-snake determinants. In
Subsection~\ref{Polyomino wug-tiles for matrices} we define polyomino
wug-tiles. Subsection~\ref{subsection: Canonical construction} gives
canonical constructions of heads and matrix tiles. In
Subsection~\ref{Polyomino wug-tiles and Markov-Davenport forms} we prove
the determinant formula for Markov-Davenport forms. 
Further Subsections~\ref{Linear recursions and lower companion matrices} and~\ref{Recurrence relations for sequences and corresponding wug-graphs}
relate the construction to linear recurrences and lower companion matrices.
In Subsection~\ref{Lower companion matrices decomposition} we prove that every integer matrix can be represented as a product of lower companion matrices; in this context we briefly recall the reduced matrices in Subsection~\ref{Semigroup of reduced matrices}.
We say a few words on the rank of wug-tiles representing matrices in Subsection~\ref{A few words on the rank of wug-tiles representing matrices}.
 Finally, Subsection~\ref{Wug-snake graphs for CW-complexes} briefly discusses
face matchings in CW-complexes.

\subsection{Wug-snake graphs and their basic recurrence}
\label{Definition of wug-snake graphs and their basic properties}

\subsubsection{Definition and examples} We first define the ambient class of ordered weighted bipartite graphs.

\begin{definition}
Let $n\in\z_{>0}\cup\{\infty\}$. An \textit{ordered weighted bipartite
graph} $K_n(w)$ consists of two ordered vertex sets
\[
    U=\{u_1,\ldots,u_n\},
    \qquad
    V=\{v_1,\ldots,v_n\},
\]
and a weight matrix $w=(w_{i,j})$, where $w_{i,j} \in \z$ is the weight of the
edge $u_i v_j$. If $w_{i,j}=0$, the edge $u_i v_j$ is absent.
\end{definition}

We now impose the structural condition that defines wug-snake graphs.

\begin{definition}\label{def: super upper triangular}
A matrix $w = (w_{i,j})$ is called \textit{super upper triangular} if 
\[
    w_{i,j}=0
    \qquad
    \text{for all } i>j+1.
\]
Equivalently, all entries below the subdiagonal are zero.
\end{definition}

This specific configuration leads directly to our central concept:

\begin{definition}[Wug-snake graph]\label{def:continuant-wug}
An ordered weighted bipartite graph $K_n(w)$ is called a
\textit{weighted universal generalised snake graph}, or \textit{wug-snake
graph}, if its weight matrix $w$ is super upper triangular and
\[
    w_{i+1,i}=1
\]
for every admissible $i$. In other words, all subdiagonal edges have
weight $1$.
\end{definition}
\begin{definition}[Weighted perfect matching number]
Consider a graph $K_n(w)$, and let $P$ be the set of all its perfect matchings.
The \textit{weighted perfect matching number} of $K_n(w)$, denoted by
$\mu(K_n(w))$, is
\[
    \mu(K_n(w))
    =
    \sum_{P}
    \prod_{u_i v_j\in P} w_{i,j}.
\]
When all nonzero weights are $1$, this is the ordinary
number of perfect matchings.
\end{definition}
\begin{remark}
When weights are not all nonnegative, $\mu(K_n(w))$ should be interpreted
as a weighted matching sum rather than as an ordinary counting function.
\end{remark}
This framework naturally generalizes traditional models, as stated in the following proposition.
\begin{proposition}\label{prop: domino are wug-snake}
Every domino snake graph can be represented as a wug-snake graph.
\end{proposition}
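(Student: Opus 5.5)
We have to show that the vertex set of every domino snake graph $D(p/q)$ can be split into two colour classes and ordered so that the biadjacency matrix is super upper triangular with all subdiagonal entries equal to $1$. All other weights will be taken equal to $1$ on edges and $0$ otherwise. Then $\mu(K_n(w))$ is exactly the ordinary perfect matching number $\mu(D(p/q))$, because a weighted sum with unit weights counts matchings. The plan is to exploit the fact that $D(p/q)$ is a chain of unit quadrants (squares of the half-lattice). The chain is glued along edges, moving only right or up, because the segment $I_{p,q}$ has positive slope. Each new quadrant therefore shares exactly one edge with the previous one.

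\textbf{Step 1: ordering the vertices.} First I would describe $D(p/q)$ as a sequence of squares $Q_1,\ldots,Q_N$ with $N$ odd, each $Q_{k+1}$ attached to $Q_k$ along its top or right edge. The graph has $2(N+1)$ vertices: $Q_1$ contributes $4$, and each further square adds $2$ new vertices, namely the endpoints of the edge opposite the glued one. The square lattice is bipartite by the parity of $x+y$. In each square the two new vertices lie at the ends of an edge, so they have opposite colours. Let $u_{k}$ and $v_{k}$ denote the white and black new vertices of $Q_{k-1}$, with $Q_1$ contributing two pairs. This gives $U=\{u_1,\ldots,u_n\}$ and $V=\{v_1,\ldots,v_n\}$ with $n=N+1$. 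Within each pair, and within $Q_1$, I would fix the order so that $u_{i+1}v_i$ is an edge; this is the edge joining the two new vertices of a square.

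\textbf{Step 2: checking the triangularity.} Any edge of the graph is an edge of some square $Q_k$. Its endpoints are new vertices of $Q_k$ or of $Q_{k-1}$, since the glued edge of $Q_k$ consists of vertices introduced at the previous step or possibly at the step before it. This locality should give $w_{i,j}=0$ whenever $i>j+1$. The indexing puts later white vertices only adjacent to black vertices of index at least $i-1$. Edges going "backwards" to earlier black vertices may have index gaps, but a gap $j>i$ is allowed above the diagonal. So the requirement is only that no white vertex is adjacent to a black vertex more than one step earlier.

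\textbf{Main obstacle.} The hard part is the corner case where the chain turns, that is, where a right step is followed by an up step or vice versa. There a vertex of $Q_{k-1}$ can remain on the boundary of $Q_{k+1}$, so a white vertex may be adjacent to a black vertex two indices back. I would handle this by choosing the within-pair order, and possibly swapping the roles of the two colour classes (transposing $w$), according to the turn type. I would then verify directly the two local configurations, straight and turn, on a $2\times 2$ window. If that fails, the fallback is to order the vertices by the linear functional $x+y$ along the snake, refined by the order of the squares. Adjacency only connects vertices whose values of $x+y$ differ by $1$, and this should force the band structure. Finally I would check the example $D(2/3)$ against the figure as a sanity test.
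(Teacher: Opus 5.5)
\textbf{There is a genuine gap: your plan stops at the one point that needs proof.} You correctly locate the danger. If $Q_{k-1}$ is a turn, the glued edge of $Q_k$ contains a corner vertex $c$ that was introduced in $Q_{k-2}$. If $c\in V$, then the $U$-vertex introduced in $Q_k$ (index $k+1$) is adjacent to $c$ (index $k-1$) and \emph{not} to $v_k$. So both conditions fail at once: $w_{k+1,k-1}=0$ and $w_{k+1,k}=1$.

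None of your remedies removes this.
\begin{itemize}
\item There is no ``within-pair order'' to choose, since each pair consists of one vertex of each colour.
\item The graph is connected, so its bipartition is fixed up to a single \emph{global} swap. You cannot swap colour classes turn by turn.
\item A local check on a $2\times2$ window cannot settle the matter. For a general snake graph, such as the staircase with steps right, up, right, the two turn corners have opposite colours, and your order fails for either global choice.
\end{itemize}
Also, in Step~1 the edge joining the two new vertices of $Q_k$ is the diagonal edge $u_{k+1}v_{k+1}$. The subdiagonal edge $u_{k+1}v_k$ is a side edge of $Q_k$, and that is exactly the edge that breaks at a bad turn. Your argument never uses anything specific to \emph{domino} snake graphs, and that is precisely what is required.

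\textbf{The missing idea is the structure behind the paper's phrase ``successively attaching domino blocks''.} Each step of the unit-square path of $S(p/q)$ becomes two quadrant steps in the same direction:
\begin{itemize}
\item a right step gives lower-right, lower-left, lower-right;
\item an up step gives lower-right, upper-right, lower-right.
\end{itemize}
Hence $(Q_2,Q_3),(Q_4,Q_5),\ldots$ are straight dominoes, and turns occur only at the quadrants $Q_{2m+1}$. The corner of such a turn is the upper-right vertex of $Q_{2m}$. That vertex is either the centre of a unit square (if $Q_{2m}$ is a lower-left quadrant) or a point of $\z^2$ (if $Q_{2m}$ is an upper-right quadrant). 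All such points lie in the same colour class of $\tfrac12\z^2$.

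Take $U$ to be this class, and in $Q_1$ let $v_2$ be the $V$-endpoint of the edge shared with $Q_2$. Then your introduction order works:
\begin{itemize}
\item each edge added with $Q_k$ ($k\ge2$) gives one of $w_{k+1,k+1}$, $w_{k+1,k}$, $w_{k,k+1}$, or, at a corner, $w_{k-1,k+1}$;
\item $u_{k+1}v_k$ is always present, because the $V$-endpoint of the glued edge of $Q_k$ was introduced in $Q_{k-1}$.
\end{itemize}

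Your $x+y$ fallback can also be completed, but adjacency across consecutive levels alone does not give it. Each level carries two vertices, and the edges between two consecutive levels form a path with three edges. You still have to choose which class is $U$, and the order inside each level, to exclude entries two below the diagonal and to produce the unit subdiagonal.
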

\begin{proof}
A domino snake graph is a bipartite subgraph of the square lattice. Choose a
bipartition $U\sqcup V$ and order the vertices by following the domino snake
from one end to the other. With this ordering, the boundary edges of the
domino snake give edges $u_{i+1}v_i$ for $i=1,\ldots,n-1$, all of weight
$1$. Since the graph is obtained by successively attaching domino blocks
along edges, no edge can connect $u_i$ to $v_j$ with $i>j+1$. Thus the
corresponding weight matrix is super upper triangular and has all
subdiagonal entries equal to $1$. Hence the domino snake graph is
represented by a wug-snake graph.
\end{proof}

We illustrate this proposition with a simple example.

\begin{example}\label{ex:perm-det}
Consider the domino snake graph $D\left(2/3\right)$:
\[
    \vcenter{\hbox{\includegraphics[width=3cm]{figures/snake29-1.pdf}}}
\]

We can redraw $D\left(2/3\right)$ as a wug-snake graph $K_8(w)$ by assigning every edge a weight of $1$:
\[
    \vcenter{\hbox{\includegraphics[width=3cm]{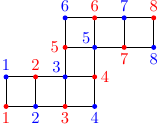}}}
    \qquad
    \vcenter{\hbox{\includegraphics[width=5cm]{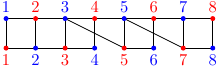}}}
    \qquad
    \vcenter{\hbox{\scalebox{1.05}{$w =\left(\begin{smallmatrix}
        1 &  1 &  0 &  0 &  0 &  0 &  0 & 0 \\
       1 &  1 &  1 &  0 &  0 &  0 &  0 & 0 \\
        0 & 1 &  1 &  1 &  1 &  0 &  0 & 0 \\
        0 &  0 & 1 &  1 &  0 &  0 &  0 & 0 \\
        0 &  0 &  0 & 1 &  1 &  1 &  1 & 0 \\
        0 &  0 &  0 &  0 & 1 &  1 &  0 & 0 \\
        0 &  0 &  0 &  0 &  0 & 1 &  1 & 1 \\
        0 &  0 &  0 &  0 &  0 &  0 & 1 & 1
    \end{smallmatrix}\right)$}}}
\]

The figures above illustrate the enumeration of the bipartite vertex sets $U$ (labelled in red) and $V$ (labelled in blue) for both the domino snake graph (left) and its corresponding wug-snake representation (middle) and weight matrix (right).
\end{example}

We continue by formally defining the subgraphs and associated sequences of wug-snake graphs.

\begin{definition}[Perfect matching sequence]
Let $K_n(w)$ be a wug-snake graph. For $0\leq i\leq n$, we denote by
$K_i(w)$ the induced wug-snake subgraph on the vertices
\[
    u_1,\ldots,u_i,
    \qquad
    v_1,\ldots,v_i.
\]
We set $K_0(w)=\varnothing$ and use the convention $\mu(K_0(w))=1.$
The sequence $(K_i(w))_{i=0}^n$ is called the \textit{filtration} of
$K_n(w)$, and the sequence
\[
    \bigl(\mu(K_i(w))\bigr)_{i=0}^n
\]
is called its \textit{perfect matching sequence}. When listing a perfect
matching sequence explicitly, we usually omit the conventional initial
term \(\mu(K_0(w))=1\) and write $ \bigl(\mu(K_1(w)),\ldots,\mu(K_n(w))\bigr).$
\end{definition}

\begin{definition}[Continuant matrix]\label{def:continuant matrix}
Let $K_n(w)$ be a wug-snake graph. Its \textit{continuant matrix} $C_n(w)$ is the $n\times n$ matrix obtained from the upper-left $n\times n$ part of $w$ by replacing each subdiagonal entry $1$ by $-1$. Thus
\[
    (C_n(w))_{i,j}
    =
    \begin{cases}
    -1, & i=j+1,\\
    w_{i,j}, & \text{otherwise}.
    \end{cases}
\]
\end{definition}

The following proposition establishes a recursive formula relating the wug-snake graph $K_n(w)$ to the subgraphs within its filtration. We discuss continuants in more details later in Example~\ref{ex:continuant-matrix}.

\begin{theorem}\label{theorem:wug-det}
Let $K_n(w)$ be a wug-snake graph. Then
\[
    \mu(K_n(w))=\det(C_n(w)).
\]
Moreover, the perfect matching sequence satisfies the recurrence
\[
    \mu(K_n(w))
    =
    \sum_{i=1}^n w_{i,n}\,\mu(K_{i-1}(w)).
\]
\end{theorem}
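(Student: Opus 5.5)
Both claims reduce to one recurrence: first prove the recurrence combinatorially, then show that $\det C_n(w)$ satisfies the same recurrence with the same initial value.

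\emph{Combinatorial recurrence.} A perfect matching of $K_n(w)$ is a bijection $\sigma$ with $u_i\mapsto v_{\sigma(i)}$, each edge nonzero. Super upper triangularity forces $\sigma(i)\ge i-1$ for every $i$.

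Look at the vertex $v_n$ and suppose it is matched with $u_i$. Then $\sigma$ restricts to a bijection from $\{u_1,\dots,u_n\}\setminus\{u_i\}$ onto $\{v_1,\dots,v_{n-1}\}$. We claim this forces
\[
\sigma(i+1)=i,\quad \sigma(i+2)=i+1,\quad\ldots,\quad \sigma(n)=n-1 .
\]
The argument runs downward from $u_n$.
\begin{itemize}
\item The vertex $u_n$ must go to some $v_j$ with $n-1\le j\le n-1$, so $\sigma(n)=n-1$.
\item Then $u_{n-1}$ can only go to $v_{n-2}$, since $v_{n-1}$ and $v_n$ are already used.
\item Continuing in the same way, each $u_k$ with $k>i$ goes to $v_{k-1}$.
\end{itemize}
All of these forced edges are subdiagonal, so each has weight $1$.

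What remains is a perfect matching of the induced subgraph $K_{i-1}(w)$ on $u_1,\dots,u_{i-1}$ and $v_1,\dots,v_{i-1}$. Conversely, every such matching extends uniquely in this way. Summing over $i$ gives
\[
\mu(K_n(w))=\sum_{i=1}^n w_{i,n}\,\mu(K_{i-1}(w)),
\]
with $\mu(K_0(w))=1$. This is the second statement.

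\emph{Determinant recurrence.} Expand $\det C_n(w)$ along its last column: entry $(i,n)$ equals $w_{i,n}$ (for $i=n$ it is the diagonal entry). Deleting row $i$ and column $n$ leaves a block upper triangular matrix with two diagonal blocks.
\begin{itemize}
\item The first block is the leading $(i-1)\times(i-1)$ block, namely $C_{i-1}(w)$.
\item The second block is the square submatrix formed by rows $i+1,\dots,n$ and columns $i,\dots,n-1$. It is upper triangular with diagonal entries $-1$, coming from the subdiagonal of $C_n(w)$, so its determinant is $(-1)^{n-i}$.
\end{itemize}
The block below these two, with rows $i+1,\dots,n$ and columns $1,\dots,i-1$, vanishes because $i+1>(i-1)+1$ places it strictly below the subdiagonal.

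The cofactor sign is $(-1)^{i+n}$, so the total sign is $(-1)^{i+n}(-1)^{n-i}=1$. This yields
\[
\det C_n(w)=\sum_{i=1}^n w_{i,n}\det C_{i-1}(w),
\]
with the convention $\det C_0=1$.

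\emph{Conclusion.} Both sequences $\mu(K_n(w))$ and $\det C_n(w)$ satisfy the same recurrence with the same initial term $1$. By induction on $n$ they coincide, which gives the first statement.

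\emph{Main obstacle.} The delicate points are verifying the block structure and the sign bookkeeping in the cofactor expansion, together with checking that the forced matching really uses only subdiagonal edges. I would confirm the boundary cases $i=1$ and $i=n$ explicitly.
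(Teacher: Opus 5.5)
Your proof is correct and follows the same route as the paper. You get the matching recurrence by conditioning on the partner $u_i$ of $v_n$ and noting that the subdiagonal edges $u_k v_{k-1}$ for $k>i$ are then forced, and you get the identical recurrence for $\det C_n(w)$ by expanding along the last column, so the two agree by induction. Your explicit block-triangular description of the minor, with the $(n-i)\times(n-i)$ upper triangular block of $-1$'s contributing $(-1)^{n-i}$, spells out the sign cancellation that the paper states in one line.
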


We defer the proof of this theorem a bit later in Subsection~\ref{Continuant and proof}.

\begin{remark}[Positive integer weights]
A positive integer weight $a$ on an edge may be interpreted as an edge of
multiplicity $a$. Equivalently, one may replace such a weighted edge by an
unweighted local configuration:
$$
\begin{array}{c}
\includegraphics[width=5cm]{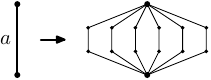}
\end{array}
$$
More precisely, depending on whether the multiple edge is used in the
matching, one replaces it by one of the two configurations
$$
\begin{array}{c}
\includegraphics[width=5cm]{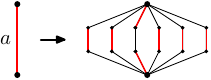}
\end{array}
\qquad \quad
\begin{array}{c}
\includegraphics[width=5cm]{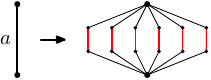}
\end{array}
$$
Thus positive integer weights can be eliminated at the cost of enlarging
the graph.
\end{remark}

\subsection{Continuant matrices for wug-snake graphs}
\label{Continuant and proof}

We now prove Theorem \ref{theorem:wug-det} by relating the continuant matrix to the perfect matching number.

\subsubsection{Continuant matrices}
\label{Continuants of wug-snake graphs}

Let $K_n(w)$ be a wug-snake graph with weight matrix $w=(w_{i,j})$. Recall
that $w$ is super upper triangular and that all subdiagonal entries satisfy
$w_{i+1,i}=1$. Recall we have defined the continuant matrices $C_n(w)$ associated with $K_n(w)$ in Definition \ref{def:continuant matrix}.

\begin{remark}
The matrix $C_n(w)$ is analogous to the continuant matrices appearing in
the theory of continued fractions. In the present setting, it plays the role
of a determinant model for perfect matchings of wug-snake graphs.
\end{remark}

\begin{example}\label{ex:continuant-matrix}
Consider the wug-snake graph $K_8(w)$ from Example~\ref{ex:perm-det}, where
all nonzero edge weights are equal to $1$:
\[
    \vcenter{\hbox{\includegraphics[width=5cm]{figures/snake29-3.pdf}}}
\]
The corresponding continuant matrix is
\[
    C_8(w)=
    \left(\begin{smallmatrix}
        1 &  1 &  0 &  0 &  0 &  0 &  0 & 0 \\
       -1 &  1 &  1 &  0 &  0 &  0 &  0 & 0 \\
        0 & -1 &  1 &  1 &  1 &  0 &  0 & 0 \\
        0 &  0 & -1 &  1 &  0 &  0 &  0 & 0 \\
        0 &  0 &  0 & -1 &  1 &  1 &  1 & 0 \\
        0 &  0 &  0 &  0 & -1 &  1 &  0 & 0 \\
        0 &  0 &  0 &  0 &  0 & -1 &  1 & 1 \\
        0 &  0 &  0 &  0 &  0 &  0 & -1 & 1
    \end{smallmatrix}\right).
\]
A direct computation gives $\det(C_8(w))=29.$ This agrees with the number of perfect matchings of $K_8(w)$.
\end{example}

\begin{remark}
In Example~\ref{ex:continuant-matrix}, the entries $-1$ on the subdiagonal
correspond to the green edges in the following graph:
\[
    \vcenter{\hbox{\includegraphics[width=4.8cm]{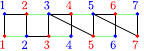}}}
\]
These edges are forced whenever one matches a later vertex $v_k$ to an
earlier vertex $u_i$.
\end{remark}

\subsubsection{Perfect matching numbers and continuants}
\label{Matching numbers and continuants}

We now prove that the determinant of the continuant matrix computes the
weighted perfect matching number of the corresponding wug-snake graph.

\begin{proposition}\label{kn-cn}
Let $K_n(w)$ be a wug-snake graph, and let $C_n(w)$ be its continuant
matrix. Then
\[
    \mu(K_n(w))=\det(C_n(w)).
\]
Moreover, the sequence $\mu(K_k(w))$ satisfies the recurrence
\[
    \mu(K_k(w))
    =
    \sum_{i=1}^k w_{i,k}\mu(K_{i-1}(w)),
    \qquad
    k=1,\ldots,n,
\]
with the convention $\mu(K_0(w))=1$.
\end{proposition}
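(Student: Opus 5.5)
We prove the recurrence combinatorially first, then identify it with the cofactor expansion of $\det C_k(w)$ along its last column. Write $\mu_k=\mu(K_k(w))$.

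\textbf{The recurrence.} Fix $k\geq 1$ and a perfect matching $P$ of $K_k(w)$. Since $w$ is super upper triangular, the vertex $v_k$ is matched to some $u_i$ with $i\leq k$. (The only possibility with $i=k+1$ lies outside $K_k(w)$.) We claim that once $u_iv_k\in P$, the edges $u_{j+1}v_j$ for $j=i,\ldots,k-1$ are forced. This follows by downward induction on $j$ from $j=k-1$. The vertex $u_k$ (when $i<k$) can only be matched to some $v_j$ with $j\geq k-1$, because $w_{k,j}=0$ for $j<k-1$. Since $v_k$ is already used, $u_k$ is matched to $v_{k-1}$. Repeating this, $u_{j+1}$ must be matched to $v_j$ for all $j=i,\ldots,k-1$. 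The remaining edges form an arbitrary perfect matching of $K_{i-1}(w)$. This uses that $u_1,\ldots,u_{i-1}$ and $v_1,\ldots,v_{i-1}$ are exactly the leftover vertices, and that the induced subgraph on them is $K_{i-1}(w)$.

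The forced edges all have weight $1$. Summing over $i$ therefore gives
\[
\mu_k=\sum_{i=1}^k w_{i,k}\,\mu_{i-1}.
\]
This decomposition is the main point to check carefully. In particular, one must verify that the forced chain never conflicts with the choice $u_iv_k$, and that each pair of $i$ and a matching of $K_{i-1}(w)$ gives back exactly one matching of $K_k(w)$. The sign $(-1)$ issues do not arise here, since the weights multiply directly.

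\textbf{The determinant.} Set $D_k=\det C_k(w)$ and $D_0=1$. Note that $C_k(w)$ is the leading $k\times k$ principal submatrix of $C_n(w)$. Expand $D_k$ along the last column, whose entries are $w_{i,k}$ for $i\leq k$ (the diagonal entry $w_{k,k}$ included). Deleting row $i$ and column $k$ leaves a block lower-triangular matrix. The upper-left block is $C_{i-1}(w)$. The lower-right block, in rows $i+1,\ldots,k$ and columns $i,\ldots,k-1$, is upper triangular with diagonal entries equal to the subdiagonal entries $-1$. The off-diagonal block in rows $1,\ldots,i-1$ and columns $i,\ldots,k-1$ may be nonzero, but it lies above the block diagonal and does not affect the determinant.

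Hence the minor equals $D_{i-1}(-1)^{k-i}$. The cofactor sign is $(-1)^{i+k}$, so the total sign is $(-1)^{i+k}(-1)^{k-i}=1$. We obtain $D_k=\sum_{i=1}^k w_{i,k}D_{i-1}$.

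\textbf{Conclusion.} The sequences $\mu_k$ and $D_k$ satisfy the same recurrence with the same initial value $\mu_0=D_0=1$. Induction on $k$ gives $\mu_k=D_k$ for all $k$, and in particular $\mu(K_n(w))=\det C_n(w)$.

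The only delicate bookkeeping is the block-triangular shape of the minor and its sign. We would verify this directly on the indices rather than by a generic argument.
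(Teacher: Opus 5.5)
Your proof is correct and follows the paper's argument: you derive the recurrence combinatorially by matching $v_k$ to $u_i$ and forcing the chain of subdiagonal edges $u_{j+1}v_j$. You then obtain the same recurrence for $\det C_k(w)$ by expanding along the last column, where the cofactor sign $(-1)^{i+k}$ is cancelled by the factor $(-1)^{k-i}$ from the subdiagonal entries, and conclude by induction from the common initial value. One small slip: the minor is block \emph{upper} triangular, not lower. The block in rows $i+1,\ldots,k$ and columns $1,\ldots,i-1$ vanishes because row minus column is at least $2$, and this is exactly what your next sentence about the off-diagonal block lying above the diagonal uses.
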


\begin{proof}
For $1\leq k\leq n$, let $C_k(w)$ be the $k\times k$ upper-left principal
submatrix of $C_n(w)$. We prove by induction on $k$ that $\mu(K_k(w))=\det(C_k(w))$.

For $k=0$, both sides are equal to $1$ by convention. For $k=1$, the graph
$K_1(w)$ consists of two vertices joined by an edge of weight $w_{1,1}$.
Thus $\mu(K_1(w))=w_{1,1}=\det(C_1(w))$.

Assume now that the statement holds for all smaller indices. We first note
that $\det(C_k(w))$ satisfies the recurrence
\[
    \det(C_k(w))
    =
    \sum_{i=1}^k w_{i,k}\det(C_{i-1}(w)).
\]
Indeed, this follows by expanding $\det(C_k(w))$ along the last column. The
entry in row $i$ of the last column is $w_{i,k}$. After deleting row $i$ and
the last column, the lower-right part of the remaining matrix is forced by
the subdiagonal entries $-1$. The corresponding cofactor sign is exactly
cancelled by the product of these $-1$ entries, leaving the contribution $ w_{i,k}\det(C_{i-1}(w)).$

It remains to show that $\mu(K_k(w))$ satisfies the same recurrence. In any
perfect matching of $K_k(w)$, the vertex $v_k$ must be matched to exactly
one of the vertices $u_i$, where $1\leq i\leq k$. Suppose that the edge
$u_i v_k$ is chosen. This contributes the weight $w_{i,k}$. Then all
intermediate vertices are forced to be matched by the subdiagonal edges
\[
    u_kv_{k-1},\quad u_{k-1}v_{k-2},\quad \ldots,\quad u_{i+1}v_i.
\]
Each of these edges has weight $1$. After removing these forced matched
vertices, the remaining graph is precisely $K_{i-1}(w)$. Hence the total
weighted contribution of all perfect matchings containing the edge
$u_i v_k$ is $w_{i,k}\mu(K_{i-1}(w)).$

Summing over all possible choices of $i$ gives
\[
    \mu(K_k(w))
    =
    \sum_{i=1}^k w_{i,k}\mu(K_{i-1}(w)).
\]

Thus $\mu(K_k(w))$ and $\det(C_k(w))$ satisfy the same recurrence with the
same initial condition. By induction,
\[
    \mu(K_k(w))=\det(C_k(w))
\]
for all $k=0,\ldots,n$. In particular, the first statement of the proposition holds.
\end{proof}

\begin{proof}[Proof of Theorem~\ref{theorem:wug-det}]
This is immediate from Proposition~\ref{kn-cn}.
\end{proof}

\subsection{Heads, bodies, and wug-snake determinants}
\label{Heads and bodies of wug-snake graphs, their determinants}

\subsubsection{Decomposition and numerical invariants}

We now decompose a wug-snake graph into two parts: an initial segment
called the \textit{head}, and the remainder
called the \textit{body}. We begin
with the formal definitions and then introduce the associated numerical
invariants.

\begin{definition}\label{def:head-body}
Let $K_n(w)$ be a wug-snake graph, and let $d$ be an integer with
$1\leq d\leq n$. We decompose $K_n(w)$ as follows.
\begin{itemize}
    \item The subgraph $H=K_d(w)$ is called the \textit{$d$-head}, or
    simply the \textit{head}, of $K_n(w)$.

    \item The remaining part $B=K_n(w)\setminus K_d(w)$ is called the
    \textit{$(n-d)$-body}, or simply the \textit{body}, of $K_n(w)$.

    \item The set of edges connecting vertices of $H$ to vertices of $B$
    is called the \textit{neck} of $K_n(w)$. By convention, the neck is
    regarded as part of the body data.
\end{itemize}
We write this decomposition as $K_n(w)=HB$.
\end{definition}

We associate the following numerical invariants to this decomposition.

\begin{definition}\label{def:rank-neck}
Let $K_n(w)=HB$ be as above.
\begin{itemize}
    \item The \textit{lengths} of $H$, $B$, and $K_n(w)$ are $d$,
    $n-d$, and $n$, respectively.

    \item The \textit{rank} of $K_n(w)$ is
    \[
        \max\{j-i+1 : w_{i,j}\neq 0,\; i\leq j\}.
    \]
    Equivalently, it is one more than the index of the highest nonzero
    superdiagonal of $w$.
    
    The rank of a body \(B\), including its neck data, is the maximum width
\(j-i+1\) of a nonzero upper-triangular weight belonging to \(B\).
This quantity is invariant under attaching \(B\) to different heads of
the same length.

    \item The \textit{neck-length} of $B$ is $d-\lambda$, where $\lambda$
    is the number of initial zero rows in the submatrix of $w$ formed by
    columns $d+1,\ldots,n$.
\end{itemize}
\end{definition}

%\begin{remark}
%The neck records how a body is attached to a %head. Thus, when we multiply
%bodies below, the attachment data are part of the structure.
%\end{remark}

Let us illustrate the definition with the following example.

\begin{example}\label{ex:head-body-decomposition}
Consider the following wug-snake graph $K_9(w)$:
$$
\includegraphics[height=2cm]{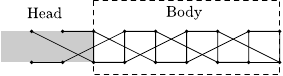}
$$
Note that the first grey square appears to have two vertices missing. This
is because $K_i$ has $2i$ vertices: $K_1$ corresponds to $2$ vertices,
$K_2$ to $4$, $K_3$ to $6$, and so on.

\vspace{2mm}

Its $3$-head and $6$-body are shown separately:
$$
\includegraphics[height=1cm]{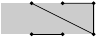}
\qquad\qquad
\includegraphics[height=2cm]{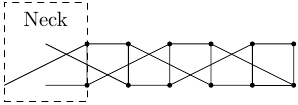}
$$

The corresponding weight matrix $w$ is displayed below:
$$
\includegraphics[height=5cm]{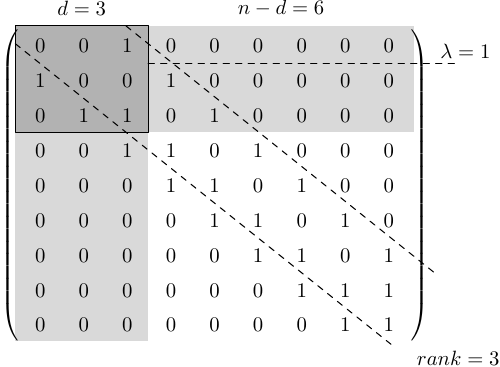}
$$
The dark grey $3\times 3$ block in the upper-left corner is the weight
matrix of the head. The light grey entries indicate the neck: the nonzero
positions in this region are the edges connecting the head to the body.

\vspace{2mm}

\noindent\textit{Rank.}\quad
The last nonzero superdiagonal of $w$ is the $2$-superdiagonal. Hence
the rank of this wug-snake graph is $3$.

\vspace{2mm}

\noindent\textit{Neck.}\quad
The neck consists of three edges: two from the upper-triangular part of
$w$ (whose positions vary from example to example) and one from the
subdiagonal (which is always present, by definition of a wug-snake graph).

\vspace{2mm}

\noindent\textit{Neck-length.}\quad
Removing the first three columns from $w$, we observe that the first row
of the resulting submatrix is zero, while the second row is nonzero. Thus
$\lambda=1$ and the neck-length equals $d-\lambda=3-1=2$. As we shall see
below, a body with neck-length $2$ can be attached to any head of length
at least $2$.
\end{example}

\subsubsection{Product of bodies, and wug-snake determinants}

\begin{definition}[Products of bodies and heads]
\quad
\begin{itemize}
\item
If \(B_1\) and \(B_2\) are two compatible bodies, their \emph{product}
\(B_1B_2\) is obtained by attaching the neck of \(B_2\) to the terminal
vertices of \(B_1\), respecting the orderings of the \(U\)- and
\(V\)-vertices.

\item
We denote by \(B^k\) the product of \(k\) copies of \(B\). We set
\(B^1=B\), and \(B^0\) is the empty body, containing no vertices.

\item
Any head is regarded as a body of neck-length \(0\), and therefore heads
may also be used in such products.
\end{itemize}
\end{definition}

\begin{remark}
With this convention, the product $B_1B_2$ means that $B_2$ is attached
after~$B_1$.
\end{remark}

\begin{example}
Consider the 1-body $B_1$ and the 2-body $B_2$ as follows:
$$
B_1=
\begin{array}{c}
\includegraphics[height=1.4cm]{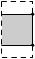}
\end{array}
\qquad \hbox{and}\qquad 
B_2=
\begin{array}{c}
\includegraphics[height=1.4cm]{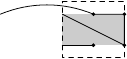}.
\end{array}
$$
Then the 3-body $B_1B_2$ is defined as
$$
B_1B_2=
\begin{array}{c}
\includegraphics[height=1.4cm]{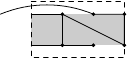}.
\end{array}
$$
\end{example}

\begin{definition}[State vector]\label{def:sum-wugs}
Let $K=HB$ be a wug-snake graph whose head has length $d$. For
$k\geq 0$.  The \textit{$k$-th state vector}
\[
    v_k(HB)\in \z^d
\]
is the vector consisting of the last $d$ entries of the perfect matching
sequence of $HB^k$. In particular, $v_0(HB)$ is the $0$-th state vector of the
head $H$.
%\\
%In case if $B$ is an empty body (i.e. the length of $B$ is zero), we write $v_k(H)$ instead of $v_k(HB)$.
\end{definition}

\begin{definition}[Wug-snake determinant]\label{Markov-MDavenport-det}
Let $K=HB$ be a wug-snake graph whose head has length $d$. The
\textit{wug-snake determinant} of $K$ is
\[
    \det(K)
    =
    \det\bigl(v_0(HB),v_1(HB),\ldots,v_{d-1}(HB)\bigr).
\]
\end{definition}

\begin{remark}
In the classical two-dimensional setting, if the head has state vector
$(1,0)$ or $(0,1)$, then the wug-snake determinant reduces to one of
the last entries of the perfect matching sequence. This recovers the usual
appearance of Markov numbers in classical snake graphs.
\end{remark}

\subsection{Polyomino wug-tiles for matrices}
\label{Polyomino wug-tiles for matrices}

We now explain how bodies of wug-snake graphs represent matrices. This is
the key link between perfect matchings and Markov-Davenport forms.

\subsubsection{Snake operators and polyomino wug-tiles}

\begin{definition}[Snake value]
Let \(B\) be a body whose neck-length does not exceed \(d\). For
\(x\in\mathbb Z^d\), choose a head \(H_x\) of length \(d\) with state
vector \(v_0(H_xB)=x\). We define
\[
    {\rm sv}_B^d(x)=v_1(H_xB)
\]
to be the \emph{snake value} at $x$. 
\end{definition}

Note that the recurrence formula shows that ${\rm sv}_B^d(x)$ depends only on \(x\) and
the weight data of \(B\), and not on the choice of \(H_x\). 

\begin{definition}[Snake operator]\label{def: snake operator}
We say that the operator \(\mathcal S_B^d:\mathbb Z^d\to\mathbb Z^d\) defined by
$$
x\mapsto {\rm sv}_B^d(x)
$$
is the {\it snake operator} for $B$. 
The snake operator is linear; its matrix is
denoted by \(M_B^d\).

\vspace{1mm}

\noindent
For \(k\geq0\), define
\[
    \mathcal S_{B^k}^d=(\mathcal S_B^d)^k.
\]
This map is called the \emph{k-step snake operator of $B$}.
\end{definition}

\begin{definition}[Polyomino wug-tile]\label{def: Polyomino wug-tile}
Let $A\in\Mat(d,\z)$. If there exists a wug-snake graph $K=HB$ whose head has length $d$, such that the 1-step snake operator of $B$ satisfies $M_B^d=A$, then $B$ is called a \textit{polyomino wug-tile} for $A$. Denote by $\mathcal T_d(A)$
the set of all polyomino wug-tiles for $A$.
\end{definition}

We will show in Subsection \ref{subsection: Canonical construction} that $\mathcal T_d(A)$ is nonempty.

\vspace{2mm}

Conversely, for a fixed dimension $d$, every body $B$ with neck-length at
most $d$ determines a unique matrix $M^d_{B}$. Thus $B$ is a polyomino
wug-tile for this unique matrix. The condition on the neck-length ensures
that the $m$-dimensional state vector contains enough information to attach
$B$ and compute the new state.

\begin{example}
Consider the body $B$  (shown in white) of a wug-snake graph:
\[
\begin{array}{c}
\includegraphics[height=3cm]{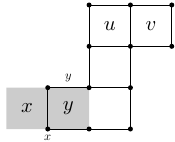}
\end{array}.
\]
All displayed edge weights are equal to $1$ except for the ones in the head marked by $x$ and $y$. Attach this body to a head $H$ whose
state vector is
\[
    v_0(HB)=
    \begin{pmatrix}
    x\\
    y
    \end{pmatrix}.
\]
The perfect matching sequence along the four white squares is
\[
    x+y,\qquad 2x+y,\qquad u=3x+2y,\qquad v=4x+3y.
\]
Hence the state vector after attaching the body is
\[
    v_1(HB)=
    \begin{pmatrix}
    u\\
    v
    \end{pmatrix}
    =
    \begin{pmatrix}
    3 & 2\\
    4 & 3
    \end{pmatrix}
    \begin{pmatrix}
    x\\
    y
    \end{pmatrix}.
\]
Thus the 1-step snake operator of $B$ is
\[
    M^2_{B}=
    \begin{pmatrix}
    3 & 2\\
    4 & 3
    \end{pmatrix},
\]
and therefore $B$ is a polyomino wug-tile for this matrix. Notice that the
rank of the wug-snake graph is $3$, whereas the induced snake operator acts
on a two-dimensional state space.
\end{example}

\begin{proposition}\label{product of tiles}
Let $B_1$ and $B_2$ be polyomino wug-tiles for matrices
$M_1,M_2 \in\Mat(m,\z)$, respectively. Then $B_1B_2$ is a polyomino wug-tile for
$M_2M_1$.
\end{proposition}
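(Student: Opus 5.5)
The plan is to show that attaching $B_1B_2$ to a head acts on state vectors as the composite of the two snake operators, first $\mathcal S_{B_1}^m$ and then $\mathcal S_{B_2}^m$. Linearity of these operators (Definition~\ref{def: snake operator}) then identifies the matrix of the composite with $M_{B_2}^mM_{B_1}^m=M_2M_1$.

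Fix $x\in\z^m$ and choose a head $H_x$ of length $m$ with $v_0(H_xB_1)=x$. Consider the wug-snake graph $H_xB_1B_2$.

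\textbf{Step 1: the first factor.} The perfect matching sequence of the initial segment $H_xB_1$ is unchanged by what is attached afterwards. This holds because $\mu(K_i(w))$ depends only on the upper-left $i\times i$ block of $w$, by Theorem~\ref{theorem:wug-det}. Hence the last $m$ entries of the sequence, computed up to the end of $B_1$, equal ${\rm sv}_{B_1}^m(x)=M_1x$.

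\textbf{Step 2: $H_xB_1$ as a head for $B_2$.} Let $H'$ be the head of length $m$ obtained by keeping the last $m$ vertex pairs of $H_xB_1$. We want to apply the defining property of $\mathcal S_{B_2}^m$ with $H'$ in place of $H_xB_1$. The neck of $B_2$ has length at most $m$, because $B_2$ is a wug-tile for an $m\times m$ matrix. So every weight $w_{i,j}$ with $j$ in $B_2$ and $i$ outside $B_2$ has $i$ among the last $m$ indices of $H_xB_1$.

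By the recurrence $\mu(K_j)=\sum_i w_{i,j}\mu(K_{i-1})$, each new term in $B_2$ is therefore a fixed linear combination of the previous $m$ state entries and of earlier terms within $B_2$. The coefficients of this combination are determined by the weights of $B_2$ together with its neck. This is exactly the observation, noted after the definition of the snake value, that the value depends only on $x$ and the weight data of the body.

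There is an indexing subtlety. The recurrence uses $\mu(K_{i-1})$, so an edge from the first vertex of the window refers to the term just before the window. I would handle this through the paper's neck-length convention: neck-length counts rows back from the junction, and a head-length-$m$ state vector is designed to carry enough information. This is the step I expect to need the most care.

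It follows that the last $m$ entries at the end of $B_2$ are ${\rm sv}_{B_2}^m(M_1x)=M_2M_1x$.

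\textbf{Step 3: conclusion.} The snake operator of $B_1B_2$ sends $x\mapsto M_2M_1x$ for every $x$. Its matrix is therefore $M_2M_1$, and $B_1B_2$ is a polyomino wug-tile for $M_2M_1$.

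The main obstacle is Step 2: making rigorous that the state after $B_1$ can serve as the state vector of a genuine head for $B_2$. This requires either the existence of a head with any prescribed state vector (the canonical construction) or, more simply, the locality argument from the recurrence, which avoids constructing such a head at all.
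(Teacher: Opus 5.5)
Your route is the paper's. Its proof is exactly the composition $v\mapsto M_1v\mapsto M_2M_1v$, and it simply takes for granted that $B_2$ sees what precedes it only through the last $m$ terms. Step 1 is fine. The off-by-one you flag in Step 2 is real, but appealing to the neck-length convention does not close it.

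Write $N=m+\ell_1$ for the length of $H_xB_1$, and $y=M_1x$ for its last $m$ terms. Neck-length at most $m$ still allows a weight $w_{N-m+1,j}\neq0$ with $j$ in $B_2$, that is, an edge from the \emph{first} vertex of the window. By the recurrence this edge contributes $w_{N-m+1,j}\,\mu(K_{N-m})$, and $\mu(K_{N-m})$ is not an entry of $y$. So your claim that each new term of $B_2$ is a linear combination of the $m$ state entries and earlier terms of $B_2$ fails in this boundary case. What locality actually gives is that the state after $B_2$ equals $My+\mu(K_{N-m})\,f_0$, for a matrix $M$ and a vector $f_0$ determined by the weights of $B_2$ alone.

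The missing step is to use the hypothesis that $B_2$ is a tile for $M_2$ to show $f_0=0$. Apply the same formula to a head of length exactly $m$ with state vector $y$, for instance its canonical realisation. There the pre-window term is $\mu(K_0)=1$, so $\mathrm{sv}^m_{B_2}(y)=My+f_0$. This must equal $M_2y$ for every $y\in\z^m$, so taking $y=0$ gives $f_0=0$, and then $M=M_2$. Consequently the extra term contributes nothing in $H_xB_1B_2$, whatever the value of $\mu(K_{N-m})$. The final state is $M_2M_1x$, and your Step 3 then goes through.

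For the tiles the paper actually builds, the canonical polyomino and $B_{\M}(A)$, the row of the first window vertex is zero in the body columns, so this case never arises. For an arbitrary tile, the argument above is needed.
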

\begin{proof}
Attaching $B_1$ sends the state vector $v$ to $M_1v$. Attaching
$B_2$ afterwards sends $M_1v$ to $M_2M_1v$. Hence the body $B_1B_2$
represents $M_2M_1$.
\end{proof}

\subsection{Canonical heads and canonical polyominoes}\label{subsection: Canonical construction}
Given a sequence of integers $(v_1,\ldots,v_d)$ of length $d$ and a matrix $A \in \Mat(m,\z)$, we provide a canonical construction of a wug-snake graph $K = HB$ which encodes both data.
\subsubsection{Canonical realisations of perfect matching sequences}\label{subsubssection: Canonical realisations of perfect matching sequences}
Given a sequence of integers $v = (v_1,\ldots,v_d)$, the set of all heads whose perfect matching sequence is $v$ is denoted by $\Head(v)$. We start with a canonical construction of a head $H_v$ with a prescribed
perfect matching sequence.

\begin{definition}
Let $v = (v_1,\ldots,v_d)$ be a sequence of integers. Its
\textit{canonical realisation} is the wug-snake graph $H_v$ with weight matrix
\[
w_{i,j}
=
\begin{cases}
v_j, & i=1,\\
1,   & i=j+1,\\
0,   & \text{otherwise}.
\end{cases} \qquad \text{ for }1 \leq i,j\leq d
\]
\end{definition}

\begin{example}
The canonical realisation of the sequence $(2,3,4,5)$
 is the wug-snake graph with its weight matrix shown below:
$$
\begin{array}{c}
\includegraphics[height=2cm]{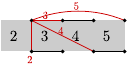}
\end{array} \qquad w = \begin{pmatrix}
 2&3&4&5\\
 1&0&0&0\\
 0&1&0&0\\
 0&0&1&0
\end{pmatrix}.
$$
All black edges have weight 1.
\end{example}

\begin{proposition}\label{canonical-head-prop}
The perfect matching sequence of the canonical realisation of
$(v_1,\ldots,v_d)$ is precisely $(v_1,\ldots,v_d)$. In particular, $H_v \in \Head(v)$.
\end{proposition}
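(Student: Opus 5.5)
We will use the recurrence of Theorem~\ref{theorem:wug-det} (equivalently Proposition~\ref{kn-cn}) and check by induction on $k$ that $\mu(K_k(w))=v_k$ for $k=1,\ldots,d$, where $w$ is the weight matrix of the canonical realisation $H_v$. The recurrence gives
\[
    \mu(K_k(w))=\sum_{i=1}^k w_{i,k}\,\mu(K_{i-1}(w)).
\]
The first step is to read off which entries of column $k$ of $w$ are nonzero. By definition the row $i=1$ contributes $w_{1,k}=v_k$. The subdiagonal entry $w_{k+1,k}=1$ lies in row $k+1>k$, so it does not appear in the sum. All other entries $w_{i,k}$ with $2\le i\le k$ are zero, since they are neither in the first row nor on the subdiagonal. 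Note also that $w$ is indeed super upper triangular with unit subdiagonal, so $H_v$ is a wug-snake graph and the recurrence applies.

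Hence the sum collapses to the single term $i=1$:
\[
    \mu(K_k(w))=w_{1,k}\,\mu(K_0(w))=v_k\cdot 1=v_k,
\]
using the convention $\mu(K_0(w))=1$. In fact no induction hypothesis is even needed, since only $\mu(K_0)$ enters. Combinatorially, this says that $v_k$ must be matched to $u_1$ (the only upper-triangular edge in its column), after which the subdiagonal edges $u_{j+1}v_j$ are forced.

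Therefore the perfect matching sequence of $H_v$ is $(v_1,\ldots,v_d)$, and so $H_v\in\Head(v)$. There is no real obstacle here. The only point requiring care is the index bookkeeping: one must confirm that the diagonal entries $w_{k,k}$ for $k\ge 2$ vanish. They do, because they lie neither in row $1$ nor on the subdiagonal. For $k=1$, the diagonal entry is $w_{1,1}=v_1$, which gives $\mu(K_1)=v_1$ directly.
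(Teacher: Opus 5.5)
Your proof is correct and is essentially the paper's argument: apply the recurrence of Theorem~\ref{theorem:wug-det} and observe that in column $k$ the only nonzero entry among rows $1,\ldots,k$ is $w_{1,k}=v_k$, so $\mu(K_k)=v_k\,\mu(K_0)=v_k$. Your added check that $H_v$ is genuinely a wug-snake graph, and your remark that no induction hypothesis is needed, are both accurate.
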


\begin{proof}
By Theorem~\ref{theorem:wug-det}, the sequence satisfies
\[
    \mu(K_j)=\sum_{i=1}^j w_{i,j}\mu(K_{i-1}).
\]
For the canonical realisation, the only nonzero entry in column $j$ among
the rows $1,\ldots,j$ is $w_{1,j}=v_j$. Hence
\[
    \mu(K_j)=v_j\mu(K_0)=v_j.
\]
This proves the claim.
\end{proof}
\subsubsection{Canonical polyomino for matrices}\label{subsubsection: Canonical polyomino for matrices}
For any integer matrix $A \in \Mat(m,\z)$, we now construct a canonical polyomino wug-tile $B_{\mathrm{can}}(A)$.

\begin{definition}
Let $A=(a_{i,j})\in \Mat(m,\z)$. The \textit{canonical polyomino} of $A$, denoted by $B_{\mathrm{can}}(A)$, is the body of length $m$ whose body weight data are encoded by the
following $2m\times m$ matrix:
\[
\left(
\begin{array}{ccccc}
0&0&\ldots&0&0\\[2pt]
a_{1,1}&a_{1,2}&\ldots& a_{1,m-1}&a_{1,m}\\
\vdots&\vdots&\ddots &\vdots &\vdots\\
a_{m,1}&a_{m,2}&\ldots& a_{m,m-1}&a_{m,m}\\[2pt]
1&0&\ldots&0&0\\
0&1&\ldots&0&0\\
\vdots&\vdots&\ddots &\vdots &\vdots\\
0&0&\ldots&1&0
\end{array}
\right).
\]
The first row of zeros is the row inherited from the head: it records the
fact that the last vertex of the head connects to the body only through the
subdiagonal edge (which has weight $1$ by definition). The middle block
contains the entries of $A$, and the lower block provides the subdiagonal
edges that propagate the state vector forward.
\end{definition}

\begin{example}
Let
$
A=
\begin{pmatrix}
a_{11} & a_{12}\\
a_{21} & a_{22}
\end{pmatrix}.
$
Then by definition, the canonical polyomino $B_{\mathrm{can}}(A)$ has weight matrix
\[
\begin{pmatrix}
0&0\\
a_{11}&a_{12}\\
a_{21}&a_{22}\\
1&0
\end{pmatrix}.
\]
Suppose $v=(x,y)$. For the canonical realisation $H_v$, the weight matrix of the full wug-snake graph $H_vB_{\mathrm{can}}(A)$ is obtained by
placing the head's weight matrix in the first $2$ columns and the body's
weight matrix in the remaining columns, with the neck entries occupying
the overlap region. This gives the $4\times 4$ weight matrix
\[
\begin{pmatrix}
x&y&0&0\\
1&0&a_{11}&a_{12}\\
0&1&a_{21}&a_{22}\\
0&0&1&0
\end{pmatrix}.
\]
The corresponding wug-snake graph is shown below:
\[
\begin{array}{c}
\includegraphics[height=2.3cm]{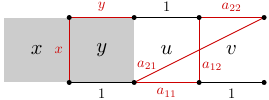}
\end{array}
\]
Here the first two columns represent the canonical realisation $H_v$, while the last two
columns represent the canonical polyomino $B_{\mathrm{can}}(A)$. By construction, attaching
$B_{\mathrm{can}}(A)$ sends the state vector $(x,y)^T$ to
\[
\begin{pmatrix}
u\\
v
\end{pmatrix}
=
\begin{pmatrix}
a_{11} & a_{12}\\
a_{21} & a_{22}
\end{pmatrix}
\begin{pmatrix}
x\\
y
\end{pmatrix}.
\]
Thus the snake operator of the canonical polyomino is precisely $A$.
\end{example}

\begin{proposition}\label{rank-canonic}
Let $A\in\Mat(m,\z)$. Then $B_{\mathrm{can}}(A)\in\mathcal T_m(A)$. In particular, every integer matrix admits a polyomino wug-tile
representation. Moreover, the rank of $B_{\mathrm{can}}(A)$ is at most
$2m-1$.
\end{proposition}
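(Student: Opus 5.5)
We verify directly, using the recurrence of Theorem~\ref{theorem:wug-det}, that attaching $B_{\mathrm{can}}(A)$ to any head $H$ of length $m$ transforms the state vector by $A$. Then we read off the rank from the positions of the nonzero weights.

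\textbf{Indexing.} Fix a head $H$ of length $m$ with perfect matching sequence ending in $x=(x_1,\ldots,x_m)^T=v_0(HB_{\mathrm{can}}(A))$, so that $\mu(K_{j})=x_j$ for $j=1,\ldots,m$. In the full weight matrix of $K_{2m}=HB_{\mathrm{can}}(A)$, the $2m\times m$ body block occupies rows $m,\ldots,3m-1$ (truncated to the $2m$ available rows) and columns $m+1,\ldots,2m$. Row $m$ is zero. Rows $m+1,\ldots,2m$ carry the entries of $A$, but only rows up to $2m$ exist. This mismatch must be resolved carefully, and I expect it to be the main obstacle.

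\textbf{Reading the block.} Comparing with the $m=2$ example, the intended reading is as follows. For the body column $m+k$, the entries of $A$ sit in rows $i$ with $i-1\in\{m,\ldots,2m-1\}$ paired against column index $k$. Concretely, $w_{m+1+s,\,m+k}$ ranges over $a_{\cdot,k}$ for $s=0,\ldots,m-1$. The lower identity block gives only the subdiagonal ones $w_{m+k+1,\,m+k}=1$. Upper-triangularity forces the entries with $m+1+s>m+k+1$ to lie below the subdiagonal, and there they must vanish.

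So I would first fix the convention so that column $m+k$ has weight $a_{k,i}$ in row $i$ for $i=1,\ldots,m$ (rows $2,\ldots,m+1$ in the example read $x,y$-rows shifted). In the example, $w_{2,3}=a_{11}$, $w_{3,3}=a_{21}$, $w_{2,4}=a_{12}$, $w_{3,4}=a_{22}$. The recurrence $\mu(K_j)=\sum_i w_{i,j}\mu(K_{i-1})$ uses $\mu(K_{i-1})$, so row $i$ picks up $\mu(K_{i-1})$. Row $i=2,\ldots,m+1$ therefore sees $\mu(K_1),\ldots,\mu(K_m)=x_1,\ldots,x_m$, while row $1$, which is part of the head, has zero entries in the body columns.

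\textbf{Computing the new state.} With this reading, the recurrence gives $\mu(K_{m+k})=\sum_{s}(\text{row-}(s+1)\text{ weight})\,x_s$, and this must equal $(Ax)_k$. Checking against the example, column $3$ gives $a_{11}x+a_{21}y$, whereas the example claims $u=a_{11}x+a_{12}y$. I would therefore reconcile the transpose convention by matching the example's stated output, and conclude that each new term equals $(Ax)_k$ for $k=1,\ldots,m$.

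The subdiagonal ones in rows $m+2,\ldots,2m$ would contribute $\mu(K_{m+k-1})$ terms. So one must check that the construction's final block places these ones exactly on the subdiagonal, where they are absorbed as the mandatory subdiagonal edges of the wug-snake structure (the recurrence sum runs over $i\le j$, so they do not enter it). Hence the last $m$ entries of the sequence form $Ax$, giving $M^m_{B_{\mathrm{can}}(A)}=A$ and $B_{\mathrm{can}}(A)\in\mathcal T_m(A)$.

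\textbf{Rank bound.} The nonzero upper-triangular weights of the body lie at positions $(i,j)$ with $i\ge 2$ and $j\le 2m$. Their width is $j-i+1\le 2m-2+1=2m-1$; for instance, $w_{2,2m}$ gives exactly width $2m-1$. This proves the rank bound. Nonemptiness of $\mathcal T_m(A)$ then follows immediately.
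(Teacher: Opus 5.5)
Your handling of the subdiagonal block and your rank bound are fine. Your final placement of the $A$-entries, in rows $2,\ldots,m+1$ of columns $m+1,\ldots,2m$ with row $1$ as the zero row (as in the $m=2$ example), is also the right one. Drop the earlier placements in rows $m,\ldots,3m-1$ or $m+1,\ldots,2m$ outright. Weights there would multiply the new terms $\mu(K_{m+1}),\mu(K_{m+2}),\ldots$ instead of $x_1,\ldots,x_m$, and ``truncating'' would discard part of $A$.

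The genuine gap is the key identity $\mu(K_{m+k})=(Ax)_k$, which you assert rather than prove. With the block as displayed, $w_{1+i,m+k}=a_{i,k}$, and the recurrence of Theorem~\ref{theorem:wug-det} gives
\[
\mu(K_{m+k})=\sum_{i=1}^m a_{i,k}\,x_i=(A^Tx)_k .
\]
This is exactly the discrepancy your check $a_{11}x+a_{21}y$ detected. ``Reconciling the convention by matching the example's stated output'' simply postulates the conclusion your own computation contradicts. In fact the literal construction represents $A^T$: for instance, the realisation $w_1$ of $\begin{psmallmatrix}3&2\\4&3\end{psmallmatrix}$ produces the new state $(3x+4y,\,2x+3y)$. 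The paper's own proof is the one-line ``by construction'' and passes over this same transposition, so you must make the correction explicit. There are two ways to do so:
\begin{itemize}
\item Define the middle block as $A^T$, i.e.\ $w_{1+i,m+k}=a_{k,i}$. Then the recurrence yields $\mu(K_{m+k})=\sum_i a_{k,i}x_i=(Ax)_k$ for every head of length $m$, so $M^m_{B}=A$.
\item Keep the literal block, conclude $M^m_{B_{\mathrm{can}}(A)}=A^T$, and deduce $B_{\mathrm{can}}(A^T)\in\mathcal T_m(A)$.
\end{itemize}
In both versions the ``in particular'' clause and the bound $2m-1$ go through unchanged. Transposing keeps the nonzero upper-triangular weights inside rows $2,\ldots,m+1$ and columns $m+1,\ldots,2m$; note that $w_{2,2m}$ attains width $2m-1$ only when it is nonzero. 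As written, however, your proof establishes neither version.
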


\begin{proof}
The statement follows directly from the construction: the body acts on the
state vector by multiplication with $A$. The displayed construction uses a
recurrence of rank at most $2m-1$. This rank may drop when some of the
initial rows involved in the construction are zero.
\end{proof}

Later in Subsection~\ref{A few words on the rank of wug-tiles representing matrices} we show that any matrix of size $m$ admits  wug-tile representations of rank at most $m$ (see Theorem~\ref{transFrob-tiles}).

\subsection{Polyomino wug-tiles and Markov-Davenport forms}
\label{Polyomino wug-tiles and Markov-Davenport forms}

By Proposition~\ref{rank-canonic} and \ref{canonical-head-prop}, every integer sequence of length $d$ admits at least one realisation, and every matrix $A\in\Mat(d,\z)$ admits at
least one polyomino wug-tile representation. Thus the following theorem applies to  every integer sequence of length $d$ and every integer matrix $A$. Moreover,
the resulting determinant depends only on $A$ and on the state vector of the
head, not on the particular tile chosen; this independence is made explicit
in Corollary~\ref{cor:tile-independence}.

\begin{theorem}[\textbf{Values of the Markov-Davenport form}]\label{MD-det}
Let $A\in\Mat(d,\z)$, and let $B\in\mathcal T_d(A)$ be any polyomino
wug-tile for $A$. 
For a sequence $x=(x_1,\ldots,x_d)$, let $H \in \Head(x)$ be any head whose perfect matching sequence is $x$.
Then
\[
    f_A(x_1,\ldots,x_d)=\det(HB),
\]
where $f_A$ is the Markov-Davenport form of $A$ and $\det(HB)$ is the wug-snake determinant, i.e. the determinant
formed from the state vectors of $H,HB,\ldots,HB^{d-1}$.
\end{theorem}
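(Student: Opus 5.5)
The plan is to reduce everything to the identity $v_k(HB)=A^k x$ for $0\le k\le d-1$. Once we have it, the wug-snake determinant is
\[
\det(HB)=\det\bigl(x,Ax,\ldots,A^{d-1}x\bigr),
\]
which is $f_A(x_1,\ldots,x_d)$ by Definition~\ref{def: markov davenport nd}.

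First, the base case. Since $H\in\Head(x)$ has length $d$, its perfect matching sequence is $(\mu(K_1),\ldots,\mu(K_d))=(x_1,\ldots,x_d)$. The last $d$ entries of this sequence are therefore exactly $x$, so $v_0(HB)=x$ directly from Definition~\ref{def:sum-wugs}.

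The inductive step is to show $v_{k+1}(HB)=\mathcal S_B^d\bigl(v_k(HB)\bigr)$. Here I would use the recurrence of Theorem~\ref{theorem:wug-det}: each new entry $\mu(K_j)$ is $\sum_i w_{i,j}\mu(K_{i-1})$. For $j$ in the last copy of $B$ inside $HB^{k+1}$, the only nonzero weights $w_{i,j}$ lie in that copy of $B$ or in its neck. Because the neck-length of $B$ is at most $d$ (this is implicit in $B$ being a tile for a $d\times d$ matrix, since $M_B^d$ is defined), those neck weights reach back only to the last $d$ vertices of $HB^k$. Hence the new $\mu$-values depend only on $v_k(HB)$ and the weights of $B$.

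Next, the graph $HB^k$ is itself a head of length at least $d$ whose terminal state vector is $v_k(HB)$. This is exactly the remark after the definition of the snake value: ${\rm sv}_B^d$ depends only on the state vector, not on the head. Applying it with the head $HB^k$ gives $v_{k+1}(HB)={\rm sv}_B^d(v_k(HB))=A\,v_k(HB)$, using $M_B^d=A$. One technicality is that the snake value was defined for heads of length exactly $d$. I would handle this by noting that only the last $d$ vertices of the preceding graph touch the attached copy of $B$. A canonical head $H_{v_k}$ then produces the same new values as $HB^k$ does, by the same recurrence computation.

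By induction, $v_k(HB)=A^kx$ for all $k$, which completes the argument. The main obstacle is making the locality claim of the inductive step rigorous, namely that the neck of each copy of $B$ only sees the last $d$ $\mu$-values. That claim rests on the neck-length definition together with the structure of the product $B^k$, which attaches each neck to the terminal vertices. The rest is the recurrence of Theorem~\ref{theorem:wug-det} together with the definition of $f_A$.
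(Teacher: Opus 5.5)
Your proposal is correct and is essentially the paper's proof. The paper also reduces to $v_k(HB)=A^k x$ for $0\le k\le d-1$ and reads off $\det(x,Ax,\ldots,A^{d-1}x)=f_A(x_1,\ldots,x_d)$, but it simply asserts the state-vector identity from the tile property, whereas you justify $v_{k+1}(HB)=A\,v_k(HB)$ by locality of the recurrence in Theorem~\ref{theorem:wug-det}. One index caution in that locality step: since $w_{i,j}$ multiplies $\mu(K_{i-1})$, a neck of length exactly $d$ also brings in $\mu(K_{N-d})$ (with $N$ the length of $HB^k$), which is not part of $v_k(HB)$ and is not the value $\mu(K_0)=1$ seen by a length-$d$ head, so your intermediate values need not agree with those for $H_{v_k}$ --- the final state vector is nevertheless $A\,v_k(HB)$, because the coefficient vector of that extra value equals $\mathrm{sv}_B^d(0)=0$, and for neck-length at most $d-1$ the issue does not arise at all.
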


The proof is immediate once the framework is in place, reflecting the
naturality of the construction.

\begin{proof}
For $x=(x_1,\ldots,x_d)^T$. By construction, the state vector of any $H \in \Head(x)$ is
$x$. Since $B$ is a polyomino wug-tile for $A$, the state vectors satisfy
\[
    v_i(HB)=A^i x,
    \qquad
    i=0,\ldots,d-1.
\]
Therefore
\[
    \det(HB)
    =
    \det(x,Ax,\ldots,A^{d-1}x)
    =
    f_A(x_1,\ldots,x_d).
\qedhere
\]
\end{proof}

\begin{corollary}\label{cor:tile-independence}
The value $\det(HB)$ depends only on $A$ and on the sequence $x$, not on the particular choice of a head whose perfect matching sequence is $x$ and polyomino wug-tile
for $A$.
\end{corollary}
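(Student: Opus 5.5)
The plan is to read the corollary off Theorem~\ref{MD-det}: both sides of that identity are compared, and one side visibly involves only $A$ and $x$. Fix $A\in\Mat(d,\z)$ and a sequence $x=(x_1,\ldots,x_d)$. Take two arbitrary choices, $H,H'\in\Head(x)$ and $B,B'\in\mathcal T_d(A)$. Applying Theorem~\ref{MD-det} to each pair gives
\[
\det(HB)=f_A(x_1,\ldots,x_d)=\det(H'B').
\]
The middle term is the Markov--Davenport form $\det(x,Ax,\ldots,A^{d-1}x)$. It is defined purely in terms of $A$ and $x$ (Definition~\ref{def: markov davenport nd}), so the common value cannot depend on the chosen head or tile.

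One subtlety must be checked before invoking Theorem~\ref{MD-det}: the state vectors $v_i(HB)$ must genuinely equal $A^ix$, whatever heads and tiles are used.

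First, $v_0(HB)=x$ for every $H\in\Head(x)$. The head $H$ has length $d$ and its perfect matching sequence is $x$, so its last $d$ entries are exactly $x$.

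Second, the passage $v_i\mapsto v_{i+1}$ must be given by $M_B^d=A$ for every head, not only for the head implicitly used when $B$ was declared a tile for $A$. I would justify this with the recurrence of Theorem~\ref{theorem:wug-det}. Each new entry $\mu(K_k)$ of the perfect matching sequence is $\sum_i w_{i,k}\mu(K_{i-1})$. Because the neck-length of $B$ is at most $d$, the only earlier entries this sum can reach are among the last $d$ entries before $B$ is attached, or inside $B$ itself. So attaching a copy of $B$ changes the state vector by the fixed linear map $\mathcal S_B^d$, independently of what precedes it; this is the remark made right after the definition of snake value.

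Induction on $i$ then gives $v_i(HB)=(\mathcal S_B^d)^i x=A^ix$. Equivalently, this is Proposition~\ref{product of tiles} applied to $B^i$.

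I do not expect a real obstacle. The only step that needs care is the head-independence of the snake operator just described, that is, that $M_B^d$ is well defined regardless of the head. Once that is in place, the corollary is a one-line consequence of Theorem~\ref{MD-det}.
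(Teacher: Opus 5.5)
Your proposal is correct and follows the paper's own argument. The paper derives the corollary in one line from Theorem~\ref{MD-det}: $\det(HB)=f_A(x_1,\ldots,x_d)=\det(x,Ax,\ldots,A^{d-1}x)$ involves only $A$ and $x$. Your extra check that $v_i(HB)=A^ix$ for every head is already contained in the proof of Theorem~\ref{MD-det} and in the remark after the definition of the snake value.

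If you keep that check, fix one boundary slip, which you share with the paper's phrasing. In $\mu(K_k)=\sum_i w_{i,k}\mu(K_{i-1})$, the row-$i$ entry multiplies $\mu(K_{i-1})$. A neck entry in row $1$ of a length-$d$ head therefore feeds in $\mu(K_0)=1$, which is not part of the state vector. So the new entries depend only on $\mu(K_1),\ldots,\mu(K_d)$ when the neck-length is at most $d-1$ in the paper's convention, not merely at most $d$.
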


\begin{proof}
This follows from Theorem~\ref{MD-det}, since $f_A(v)$ is determined
entirely by $A$.
\end{proof}

\begin{remark}
Besides canonical polyominoes, we describe in
Subsection~\ref{Recurrence relations for sequences and corresponding
wug-graphs} another important class of polyomino wug-tiles, of rank $d$,
arising from products of lower companion matrices. These are generally
non-canonical but often more natural for applications, as they connect to
the theory of multidimensional continued fractions.
\end{remark}

The definitions of the geometric and algebraic Markov numbers (Definition~\ref{alg-geom}) immediately yield the following consequence.

\begin{corollary}[\textbf{Markov numbers as wug-snake determinants}]\label{alg-geo-Markov-det-wug}
Let $A\in\Mat(d,\z)$, and let $B$ be a polyomino wug-tile for $A$.
\begin{enumerate}[label=\textup{(\roman*)}]
    \item There exists a head $H$ of length $d$ such that
    \[
        m(A)=|\det(HB)|.
    \]
    \item If $H_0$ is the canonical realisation of $(0,\ldots,0,1)$, then
    \[
        \hat m(A)=|\det(H_0B)|.
    \]
\end{enumerate}
\end{corollary}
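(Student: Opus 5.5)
The plan is to reduce both parts directly to Theorem~\ref{MD-det}, which gives $\det(HB)=f_A(x)$ whenever $H\in\Head(x)$ and $B\in\mathcal T_d(A)$. The only additional inputs are the existence of heads with a prescribed perfect matching sequence (Proposition~\ref{canonical-head-prop}) and the fact that $m(A)$ is attained for integer matrices.

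For (i), I will first note that since $A\in\Mat(d,\z)$, the form $f_A$ takes integer values on $\z^d$. The infimum in Definition~\ref{alg-geom} is then an infimum of a set of nonnegative integers, so it is a minimum. Hence there is $p=(p_1,\ldots,p_d)\in\z^d\setminus\{0\}$ with $m(A)=|f_A(p)|$; this is the remark following Definition~\ref{alg-geom}.

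Next I take $H=H_p$, the canonical realisation of $p$. It has length $d$, and by Proposition~\ref{canonical-head-prop} it lies in $\Head(p)$, so its state vector is $v_0(H_pB)=p$. Theorem~\ref{MD-det} then gives $\det(H_pB)=f_A(p)$, so $m(A)=|\det(H_pB)|$.

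There is one subtlety to check. The body $B$ must attach to a head of length $d$, which is exactly the neck-length condition built into Definition~\ref{def: Polyomino wug-tile}. The snake value depends only on the state vector (the note after the snake value definition), so replacing whatever head was used to define $B$ as a tile by $H_p$ is legitimate. This independence of the head is the only point needing care, and the paper already records it.

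For (ii), I apply the same reasoning with $x=(0,\ldots,0,1)$. Proposition~\ref{canonical-head-prop} gives $H_0\in\Head(x)$, and Theorem~\ref{MD-det} gives $\det(H_0B)=f_A(0,\ldots,0,1)$. Taking absolute values and using Definition~\ref{alg-geom} yields $\hat m(A)=|\det(H_0B)|$.
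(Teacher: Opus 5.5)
Your proposal is correct and matches the paper's proof: part (ii) is Theorem~\ref{MD-det} applied to the canonical realisation of $(0,\ldots,0,1)$. Part (i) uses that $f_A$ is integer-valued on $\z^d$, so the infimum defining $m(A)$ is attained at some $p$, and then applies Theorem~\ref{MD-det} to the canonical realisation of $p$. Your remark that the snake value does not depend on the choice of head is already built into Theorem~\ref{MD-det}, so it is a harmless sanity check rather than an extra step.
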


\begin{proof}
Statement (ii) follows immediately from Theorem~\ref{MD-det}:
\[
    \hat m(A)
    =
    |f_A(0,\ldots,0,1)|
    =
    |\det(H_0B)|.
\]
For (i), note that $A$ has integer entries, so $f_A$ takes integer values
on $\z^d$. The set
$\{|f_A(p)| : p\in\z^d\setminus\{0\}\}$
is a nonempty subset of $\z_{\geq 0}$, and its infimum is attained at some
$p=(x_1,\ldots,x_d)$. Let $H$ be the canonical realisation of
$(x_1,\ldots,x_d)$. Theorem~\ref{MD-det} gives
\[
    m(A)=|f_A(p)|=|\det(HB)|.
\qedhere
\]
\end{proof}

\begin{example}\label{exM111M1015}
Consider the matrix
\[
M=
\begin{pmatrix}
2 & 1 & 3\\
3 & 2 & 4\\
6 & 4 & 9
\end{pmatrix}.
\]
We illustrate Corollary~\ref{alg-geo-Markov-det-wug}\,(ii) by computing
$\hat m(M)$. Attaching two copies of a polyomino $B(M)$ to
the head $H_0$ representing $(0,0,1)$ gives the following wug-snake graph:
\[
\includegraphics[height=2.9cm]{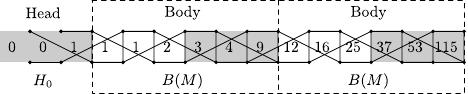}
\]
All edge weights equal $1$. The state vectors are
\[
    v_0 = \begin{pmatrix}0\\0\\1\end{pmatrix},\qquad
    v_1 = \begin{pmatrix}3\\4\\9\end{pmatrix},\qquad
    v_2 = \begin{pmatrix}37\\53\\115\end{pmatrix},
\]
and the wug-snake determinant is
\[
\det(H_0B(M))=
\det
\begin{pmatrix}
0 & 3 & 37\\
0 & 4 & 53\\
1 & 9 & 115
\end{pmatrix}
=11.
\]
For comparison, the full Markov-Davenport form of $M$ is
\[
f_M = 18x^3 + 18x^2y + 12x^2z + 26xy^2 - 22xyz - 14xz^2
     + 14y^3 + 18y^2z - 28yz^2 + 11z^3,
\]
confirming that $\hat m(M)=|f_M(0,0,1)|=11$. Moreover, in this particular example, we have $m(M)=\hat{m}(M)=11$. The details to arrive $m(M)=11$ are quite involved and we omit for this paper.
\end{example}

\subsection{Linear recurrences and lower companion matrices}
\label{Linear recursions and lower companion matrices}

We now recall the matrix formulation of linear recurrences.

\begin{definition}
Let $N\in\z_{>0}\cup\{\infty\}$. A sequence $(x_n)_{n=1}^N$ is
\textit{defined by a linear recurrence system of order $d$} if
\begin{itemize}
    \item its initial terms $x_1,\ldots,x_d$ are prescribed, and
    \item for $t=d,\ldots,N-1$,
    \[
        x_{t+1}
        =
        \sum_{i=1}^d c_{i,t}\,x_{t+1-i}.
    \]
\end{itemize}
The coefficients $c_{i,t}$ are allowed to depend on $t$.
\end{definition}

\begin{remark}
A sequence can satisfy more than one recurrence of a given order. For
instance, the sequence $(1,1,3)$ can be obtained from $x_1=x_2=1$ either
by $x_3=x_1+2x_2$ or by $x_3=2x_1+x_2$.
\end{remark}

%\begin{proposition}
%Consider a sequence $(x_n)_{n=1}^N$ and fix the number of initial terms $s$. Then the space of all possible iteration coefficients describing $(x_n)_{n=1}^N$ is an affine space.
%\end{proposition}
%
%\begin{proof}
%Indeed, let $(a_{k,t})$ and $(b_{k,t})$ be two sets of iteration coefficients defining the sequence $(x_n)_{n=1}^N$. Then the convex combination of these iteration coefficients,
%\[
%t(a_{k,t}) + (1-t)(b_{k,t}),
%\]
%defines $(x_n)_{n=1}^N$ as well for any real $t$.
%\end{proof}

%\begin{example}
%Consider the sequence $x_n = 2+(-1)^n$, which yields $(1,3,1,3,\ldots)$, and let $s=4$. Then
%\[
%x_n = x_{n-2} \qquad \text{and} \qquad x_n = x_{n-4}
%\]
%are two possible linear recurrence relations for $(x_n)$ for $n > 4$. Hence, for every real $t$, we also have the recurrence relation:
%\[
%x_n = t x_{n-2} + (1-t) x_{n-4}.
%\]
%\end{example}

\begin{definition}[Lower companion matrix]\label{def: lower companion matrix}
For $a_1,\ldots,a_d\in\z$, the \textit{lower companion matrix} $M_{a_1,\ldots,a_d} \in \Mat(d,\z)$ is given by
\[
M_{a_1,\ldots,a_d}
=
\begin{pmatrix}
0 & 1 & 0 & \cdots & 0\\
0 & 0 & 1 & \cdots & 0\\
\vdots & \vdots & \ddots & \ddots & \vdots\\
0 & 0 & \cdots & 0 & 1\\
a_1 & a_2 & \cdots & a_{d-1} & a_d
\end{pmatrix}.
\]
These matrices are also known as transposed Frobenius companion matrices.
\end{definition}

The connection between recurrences and companion matrices is the following.
If a sequence $(x_n)$ satisfies $x_{t+1}=\sum_{i=1}^d c_{i,t}\,x_{t+1-i}$,
then
\[
\begin{pmatrix}
x_{t-d+2}\\
x_{t-d+3}\\
\vdots\\
x_{t+1}
\end{pmatrix}
=
M_{c_{d,t},\ldots,c_{1,t}}
\begin{pmatrix}
x_{t-d+1}\\
x_{t-d+2}\\
\vdots\\
x_t
\end{pmatrix},
\qquad t=d,\ldots,N-1.
\]
Thus one step of a recurrence of order $d$ is encoded by multiplication by a
lower companion matrix. This motivates the following definitions.

\begin{definition}\label{Frob-to-E}
Let $d,N\in\z_{>0}$ with $d<N$, or let $N=\infty$. Let
\[
    \M
    =
    \bigl(M_{c_{d,t},\ldots,c_{1,t}}\bigr)_{t=d}^{N-1}
\]
be a sequence of lower companion matrices. The \textit{associated linear
recurrence system} is
\[
    x_{t+1}
    =
    \sum_{i=1}^{d}c_{i,t}\,x_{t+1-i},
    \qquad
    t=d,\ldots,N-1.
\]
We denote this system by $E(\M)$.
\end{definition}

%This observation provides a direct link between recurrence sequences
%, and multidimensional continued fractions, commonly referred to as \textit{subtractive algorithms} (see Subsection~\ref{Cyclic subtractive algorithms}).

\subsection{Recurrences and their wug-snake graphs}
\label{Recurrence relations for sequences and corresponding wug-graphs}

We now show that every linear recurrence system can be realised by a
wug-snake graph whose perfect matching sequence coincides with the given
recurrence sequence. The construction separates initial data from recurrence
data: the initial terms are encoded by the head, while the recurrence
coefficients determine the body.

\subsubsection{Heads for sequences}
Recall for a linear recurrence system of order $d$, the initial terms $x_1,\ldots,x_d$ are prescribed. Thus we can use, for instance, the canonical realisation of a sequence constructed in Subsection \ref{subsection: Canonical construction} for the initial terms.

\begin{remark}
Since the initial terms are not governed by the recurrence, the head is not
unique. In contrast, once a head is fixed, the body is completely determined
by the recurrence coefficients.
\end{remark}

\subsubsection{Construction of the body}

\begin{definition}\label{Perfect matching of a sequence -- definition}
Let $(x_n)_{n=1}^N$ be defined by the linear recurrence system of order $d$, with
\[
    x_{t+1}
    =
    \sum_{r=1}^{d} c_{r,t}\,x_{t+1-r},
    \qquad
    t=d,\ldots,N-1,
\]
and choose a head $H\in\Head(x_1,\ldots,x_d)$. The \emph{wug-snake graph associated with the recurrence system and the
head \(H\)} is the graph \(K=HB\) whose weight matrix is defined as follows:
\begin{itemize}
    \item on columns $1,\ldots,d$, the weight matrix agrees with that of
    $H$;
    \item for $j>d$ (corresponding to $t=j-1$),
    \[
        w_{i,j}
        =
        \begin{cases}
        c_{j+1-i,\,j-1}, & j+1-d\leq i\leq j,\\
        1,               & i=j+1,\\
        0,               & \text{otherwise}.
        \end{cases}
    \]
\end{itemize}
The subgraph $B$ is called the \textit{body associated with the recurrence
system}.
\end{definition}

\begin{theorem}\label{Perfect matching of a sequence -- theorem}
Let \(K=HB\) be the wug-snake graph constructed above. Then
\[
    \mu(K_j)=x_j,
    \qquad
    j=1,\ldots,N.
\]
Thus, the perfect matching
sequence of \(K\) is $(x_1,x_2,\ldots,x_N).$

Moreover, $\operatorname{rank}(K)\leq d.$ If $c_{d,t}\neq0$ for some \(t\in\{d,\ldots,N-1\}\), then $\operatorname{rank}(K)=d.$
\end{theorem}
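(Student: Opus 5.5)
The plan is to apply the recurrence of Theorem~\ref{theorem:wug-det} column by column. That recurrence is $\mu(K_j)=\sum_{i=1}^j w_{i,j}\mu(K_{i-1})$. It involves only column $j$ of the weight matrix and the earlier values $\mu(K_0),\ldots,\mu(K_{j-1})$. We will first check that the constructed $w$ really defines a wug-snake graph. Columns $1,\ldots,d$ are those of $H$, which is already a wug-snake graph. For $j>d$, the only nonzero entries of column $j$ lie in rows $j+1-d\le i\le j+1$, so nothing sits below the subdiagonal. The subdiagonal entries equal $1$: for $j>d$ this holds by construction, and for $j\le d$ it is inherited from $H$ (the entry $w_{d+1,d}=1$ belongs to the neck/body data and is set by the rule $i=j+1$ read for column $d$, consistent with the convention that the subdiagonal edge joining head and body has weight $1$).

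Next, induct on $j$. For $1\le j\le d$, column $j$ and all earlier columns agree with those of $H$, so $K_j$ coincides with the corresponding filtration subgraph of $H$. Hence $\mu(K_j)=x_j$ because $H\in\Head(x_1,\ldots,x_d)$. For $j>d$, put $t=j-1\ge d$ and assume $\mu(K_{i})=x_i$ for all $i<j$. Substituting $r=j+1-i$, so that $i-1=j-r=t+1-r$ and $r$ runs over $1,\ldots,d$ as $i$ runs over $j+1-d,\ldots,j$, the recurrence gives
\[
\mu(K_j)=\sum_{i=j+1-d}^{j} c_{j+1-i,\,j-1}\,\mu(K_{i-1})=\sum_{r=1}^{d} c_{r,t}\,x_{t+1-r}=x_{t+1}=x_j .
\]
All indices $i-1\ge j-d\ge1$ are legitimate, so the induction hypothesis applies. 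This proves the first claim.

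For the rank, we must check widths $j-i+1$ over all nonzero $w_{i,j}$ with $i\le j$. In body columns these satisfy $i\ge j+1-d$, so $j-i+1\le d$. In head columns $j\le d$ we have $j-i+1\le j\le d$ automatically. Hence $\operatorname{rank}(K)\le d$. If $c_{d,t}\ne0$, take $j=t+1$ and $i=j+1-d$; then $w_{i,j}=c_{d,t}\neq0$ has width exactly $d$, so the rank equals $d$.

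The only delicate point is bookkeeping: matching the indices $r\leftrightarrow i$ correctly, and making sure the neck entries in rows $\le d$ of columns $>d$ are treated as part of the body rather than the head, so that they do not disturb $\mu(K_j)$ for $j\le d$. Since $\mu(K_j)$ depends only on the first $j$ columns, this separation is automatic. No genuine obstacle is expected.
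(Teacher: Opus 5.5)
Your proof is correct and follows the paper's argument essentially verbatim. You induct on $j$ via the recurrence of Theorem~\ref{theorem:wug-det}, using that column $j>d$ has nonzero entries only in rows $j+1-d,\ldots,j$, and you read off the rank bound from $i\ge j+1-d$ together with the entry $w_{t+2-d,t+1}=c_{d,t}$ on the $(d-1)$-st superdiagonal. Your extra check that $w$ is a wug-snake graph is harmless, though the rule $i=j+1$ is stated only for $j>d$, so $w_{d+1,d}=1$ actually comes from the wug-snake convention for the neck.
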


\begin{proof}
For \(1\leq j\leq d\), the equality $\mu(K_j)=x_j$ holds by the choice of the head \(H\).

We proceed by induction on \(j>d\). Set \(t=j-1\). By
Theorem~\ref{theorem:wug-det}, $\mu(K_j)
    =
    \sum_{i=1}^{j}w_{i,j}\mu(K_{i-1}).$ By construction, the only possibly nonzero entries of column \(j\) in
rows \(1,\ldots,j\) are $w_{j+1-r,j}=c_{r,j-1}$ for $r=1,\ldots,d.$
Therefore, using the induction hypothesis,
\[
    \mu(K_j)
    =
    \sum_{r=1}^{d}
    c_{r,j-1}\mu(K_{j-r})
    =
    \sum_{r=1}^{d}
    c_{r,t}x_{t+1-r}
    =
    x_{t+1}
    =
    x_j.    
\]
This proves the assertion about the perfect matching sequence.

It remains to consider the rank. Every nonzero entry \(w_{i,j}\) with
\(i\leq j\) satisfies $ j-i\leq d-1.$ Indeed, this is automatic in the \(d\times d\) head, while for \(j>d\)
the construction gives \(i\geq j+1-d\). Hence $\operatorname{rank}(K)\leq d.$

If \(c_{d,t}\neq0\), then $w_{t+2-d,t+1}=c_{d,t}\neq0.$
This entry lies on the \((d-1)\)-st superdiagonal, since $(t+1)-(t+2-d)=d-1.$
Consequently, \(\operatorname{rank}(K)\geq d\), and therefore $\operatorname{rank}(K)=d.$
\end{proof}

\begin{example}\label{ex:sequence-194}
Consider the sequence
\[
    0,\;1,\;1,\;2,\;3,\;5,\;13,\;31,\;44,\;75,\;194.
\]
We take initial terms $x_1=0$, $x_2=1$. The remaining terms are defined by
\[
\renewcommand{\arraystretch}{1.2}
\begin{array}{llll}
x_3=x_1+x_2, &
x_4=x_2+x_3, &
x_5=x_3+x_4, &
x_6=x_4+x_5, \\
x_7=x_5+2x_6, &
x_8=x_6+2x_7, &
x_9=x_7+x_8, &
x_{10}=x_8+x_9, \\
\multicolumn{2}{l}{x_{11}=x_9+2x_{10}.}
\end{array}
\]
Two possible heads realising $(0,1)$ are shown below:
\[
\begin{array}{l}
\includegraphics[height=1.2cm]{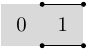}
\end{array}
\qquad\text{and}\qquad
\begin{array}{l}
\includegraphics[height=1.2cm]{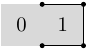}
\end{array}.
\]
Choosing the left head, the resulting wug-snake graph is
\[
\includegraphics[height=1.2cm]{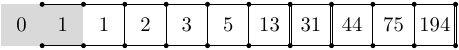}
\]
The grey part is the head; the white part is the body. All edge weights are
$1$ or $2$, as indicated.

The head has length $2$, the body has length $9$, and the full graph has
length $11$. The rank is $2$ (the recurrence has order $2$, which coincides with the rank). The neck-length
is~$1$.

The weight matrix (first $5$ columns displayed) is
\[
{\small
\begin{pmatrix}
0&1&0&0&0&\cdots\\
1&0&1&0&0&\cdots\\
0&1&1&1&0&\cdots\\
0&0&1&1&1&\cdots\\
0&0&0&1&1&\cdots\\
\vdots&\vdots&\vdots&\vdots&\vdots&\ddots
\end{pmatrix}.}
\]
\end{example}

\subsection{Lower companion matrices decomposition}
\label{Lower companion matrices decomposition}

In this subsection we study representations of matrices in $\Mat(m,\z)$ as
products of lower companion matrices. This property will be used in Subsection \ref{A few words on the rank of wug-tiles representing matrices}.

Let us introduce the following notations.
\begin{definition}\label{LC-definition}
Denote by 
\begin{itemize}
\item
$\LC(m,\z)$ the semigroup generated by all lower companion
matrices of size $m$ over $\z$;

\item $\LC_{\geq0}(m,\z)$ the subsemigroup generated by those whose last row has nonnegative entries;

\item
$\GLC(m,\z)$ the set of all invertible matrices in $\LC(m,\z)$;

\item 
$\GLC_{\geq0}(m,\z)$ the semigroup of all invertible matrices in $\LC_{\geq0}(m,\z)$.

\end{itemize}
\end{definition}

In fact, $\GLC(m,\z)$ is a group. This is justified by the following remark.

\begin{remark}\label{Remark-inverse}
Note that if $M\in \LC(m,\z)$ and $M$ is invertible, then $M^{-1}\in \LC(m,\z)$. Therefore, $\GLC(m,\z)$ is a group.
Indeed, the claim is true for the generators:
$$
M^{-1}_{\pm 1,a_2,\ldots a_m}=
(M_{1,0,\ldots, 0})^{m-1}
M_{\pm 1,\mp a_2,\ldots \mp a_m}
(M_{1,0,\ldots, 0})^{m-1}
\in \LC(m,\z),
$$
here we take either upper signs or lower signs. Now let \(M\in\LC(m,\z)\) be invertible then \(\det M=\pm1\). Since \(M\) is a product of lower
companion matrices and \(\det M=\pm1\), it follows that every factor in such a product is also invertible and the above formula shows that the inverse of each factor is in $\LC(m,\z)$. Hence, $M^{-1}$ is again a product of lower companion matrices. Therefore, $M^{-1}\in \LC(m,\z).$ 
\end{remark}

We have the following structure theorem.

\begin{theorem}\label{thm:lc-mat-glc-ll}
Let $m\ge 2$. Then
\begin{enumerate}[label=\textup{(\roman*)}]
    \item $\LC(m,\z)=\Mat(m,\z)$;
    \item $\GLC(m,\z)=\GL(m,\z)$.
\end{enumerate}
\end{theorem}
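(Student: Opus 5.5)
The plan is to reduce everything to a concrete description of products of exactly $m$ lower companion matrices, then use standard generators of $\GL(m,\z)$ and the Smith normal form.

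\textbf{Step 1 (row-building lemma).} Let $x=(x_1,\ldots,x_m)^T$ and apply $P=M^{(m)}\cdots M^{(1)}$, a product of $m$ lower companion matrices. By the discussion preceding Definition~\ref{Frob-to-E}, each factor produces a new term $x_{m+i}=\sum_{j=1}^m a^{(i)}_j x_{i-1+j}$ and shifts the window. Hence $Px=(x_{m+1},\ldots,x_{2m})^T$. Let $r_i$ denote the $i$-th row of $P$, viewed as a linear form, so that $x_{m+i}=r_i(x)$.

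When $x_{m+i}$ is formed, the available window consists of $x_i,\ldots,x_m$, which are the forms $e_i,\ldots,e_m$, together with $x_{m+1},\ldots,x_{m+i-1}$, which are the forms $r_1,\ldots,r_{i-1}$. The coefficients $a^{(i)}_j$ are arbitrary integers. So I will show the following.

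\emph{Lemma.} A matrix with rows $r_1,\ldots,r_m$ lies in $\LC(m,\z)$ whenever, for every $i$,
\[
r_i\in\operatorname{span}_\z\{e_i,\ldots,e_m,r_1,\ldots,r_{i-1}\}.
\]
I expect this bookkeeping to be the main point needing care: the indices of the window must be tracked exactly. Once it is right, everything else is formal.

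\textbf{Step 2 (generators).} I will apply the Lemma to three families of matrices.

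1. Every upper triangular integer matrix satisfies the condition, since $r_i\in\operatorname{span}_\z\{e_i,\ldots,e_m\}$. This family includes all diagonal integer matrices, all upper unipotent matrices, and $\operatorname{diag}(-1,1,\ldots,1)$.

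2. The reversal permutation $J$ has rows $r_i=e_{m+1-i}$. If $m+1-i\ge i$, then $r_i$ is one of $e_i,\ldots,e_m$. Otherwise $m+1-i<i$, and $e_{m+1-i}=r_{m+1-i}$ is an earlier row. So $J\in\LC(m,\z)$.

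3. Every lower triangular integer matrix has the form $JUJ$ with $U$ upper triangular. Since $\LC(m,\z)$ is a semigroup, lower triangular matrices (in particular lower unipotent ones) also lie in $\LC(m,\z)$.

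The elementary transvections $I+E_{ij}$ with $i\ne j$ are upper or lower unipotent, and together they generate $\SL(m,\z)$. Adding $\operatorname{diag}(-1,1,\ldots,1)$ gives all of $\GL(m,\z)$. Hence $\GL(m,\z)\subset\LC(m,\z)$.

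\textbf{Step 3 (conclusion).} For (i), take any $A\in\Mat(m,\z)$ and write its Smith normal form $A=UDV$ with $U,V\in\GL(m,\z)$ and $D$ diagonal. By Step 2 each of $U$, $D$ and $V$ lies in $\LC(m,\z)$, so $A\in\LC(m,\z)$.

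For (ii), the inclusion $\GLC(m,\z)\subset\GL(m,\z)$ is immediate from the definition. Conversely, $\GL(m,\z)\subset\LC(m,\z)$ by (i), and these matrices are invertible, so they lie in $\GLC(m,\z)$ as defined in Definition~\ref{LC-definition}. This agrees with Remark~\ref{Remark-inverse}. In any factorization of a unimodular matrix into integer lower companion matrices, the determinants $\pm a_1$ of the factors are integers whose product is $\pm1$. So every factor is itself invertible, and the factorization takes place inside $\GLC(m,\z)$.
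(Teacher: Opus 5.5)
Your row-building lemma is correct. In fact your derivation shows the criterion is also necessary, so it characterises exactly the products of $m$ lower companion matrices. The upper triangular case and the Smith normal form step are also fine.

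\textbf{The gap: item 2 of Step 2 is false.} The reversal permutation $J$ does not satisfy the lemma's hypothesis. Since $r_k=e_{m+1-k}$, you have $r_{m+1-i}=e_i$, not $e_{m+1-i}$; in fact $e_{m+1-i}=r_i$ itself.

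Take $m+1-i<i$. The earlier rows are $e_m,\ldots,e_{m+2-i}$, and $i\ge m+2-i$. So the available forms span only $e_{m+2-i},\ldots,e_m$, which misses exactly the required $e_{m+1-i}$. Already for $m=2$ you get $M_{c,d}M_{a,b}=\begin{psmallmatrix}a&b\\ da&c+db\end{psmallmatrix}$, which can never equal $J$.

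By the ``only if'' direction, $J$ is not a product of $m$ lower companion matrices at all. Item 3 ($L=JUJ$) is therefore unsupported. As a result, the lower transvections, which you need to generate $\SL(m,\z)$, have not been placed in $\LC(m,\z)$.

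\textbf{The repair stays inside your framework.} Apply the lemma directly to lower unitriangular matrices. For such $L$, the rows $r_1,\ldots,r_{i-1}$ span the same lattice as $e_1,\ldots,e_{i-1}$. Hence $\operatorname{span}_{\z}\{e_i,\ldots,e_m,r_1,\ldots,r_{i-1}\}=\z^m$ for every $i\ge2$, and the hypothesis holds trivially. For $m=2$ this is $M_{1,\lambda}M_{1,0}=\begin{psmallmatrix}1&0\\ \lambda&1\end{psmallmatrix}$. Then $J$ is not needed at all.

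Since $\LC(m,\z)$ is a priori only a semigroup, say explicitly that you use $I+\lambda E_{ij}$ for all $\lambda\in\z$, so the inverses of the generators are included. Your families do cover these.

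\textbf{Comparison with the paper.} The paper also goes through Smith normal form and elementary generators. It builds the diagonal matrices, the cycle $M_{1,0,\ldots,0}$, the transposition $T_{1,2}$ and $E_{1,2}(\lambda)$ each from a separate ad hoc recurrence sequence. It then conjugates by permutation matrices to reach all $E_{i,j}(\lambda)$. Your repaired route replaces this case analysis with one criterion that handles all upper triangular and all lower unitriangular matrices at once. It needs no permutations or inverses. It also shows that these matrices are products of exactly $m$ lower companion matrices.
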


\begin{proof}
By the Smith normal forms theory for integer matrices, for any matrix $A\in \Mat(m,\z)$, there exist $U,V \in \GL(m,\z)$ and a diagonal matrix $D$ such that $UAV=D$. Moreover, $U$ and $V$ are products of elementary unimodular matrices. We
use the standard fact that such matrices are generated by permutation
matrices, Chevalley matrices \(E_{i,j}(\lambda)=I+\lambda e_{i,j}\), with
\(i\neq j\) and \(\lambda\in\z\) and diagonal sign-change matrices.

\vspace{2mm}

It suffices to show that every diagonal matrix belongs to $\LC(m,\z)$, and
that permutation matrices, Chevalley matrices, and diagonal sign-change
matrices belong to $\GLC(m,\z)$.
We prove this by discussing the following cases.
\begin{itemize}

\vspace{2mm}

\item
{\bf Diagonal matrices.} Let $D=\operatorname{diag}(\lambda_1,\ldots,\lambda_m)$. Consider the
following sequence 
$$
x_1,\ldots, x_m,
\quad
\lambda_1 x_1,\ldots, \lambda_m x_m.
$$
Its first $m$ terms form the vector $X=(x_1,\ldots,x_m)^T$, and its last
$m$ terms form $DX$. This sequence satisfies a recurrence system of order
$m$: at the $j$-th step one obtains $\lambda_jx_j$ from the oldest entry
$x_j$ in the current state vector. Hence, $ D\in\LC(m,\z).$ 

If $D\in\GL(m,\z)$, then each $\lambda_j=\pm1$, and the corresponding
recurrence steps are invertible. Hence, $D\in\GLC(m,\z).$

In particular, all diagonal sign-change matrices belong to $\GLC(m,\z)$.
\vspace{1mm}

\item
{\bf Coordinate cycles.} The lower companion matrix $M_{1,0,\ldots, 0}$ acts by the coordinate cycle
\[
    (x_1,\ldots,x_m)^T
    \longmapsto
    (x_2,\ldots,x_m,x_1)^T.
\]
It is invertible, and therefore belongs to $\GLC(m,\z)$.

\vspace{1mm}

\item
{\bf Coordinate transpositions.} Let $T_{1,2}$ be the transposition matrix swapping the first two basis
vectors. Consider the sequence
$$
x_1, x_2, \ldots, x_m,
\quad
x_1+\sum\limits_{i=3}^m x_i,
\quad
x_2,x_1,x_3,\ldots, x_m.
$$
Indeed, the first $m$ terms form $X$, while the last $m$ terms form
$T_{1,2}X$. The displayed sequence satisfies an order-$m$ recurrence whose
steps are invertible lower companion steps. Hence, $T_{1,2}\in\GLC(m,\z).$

\vspace{1mm}
\item
{\bf Chevalley matrices.} Let $E_{1,2}(\lambda)$ be the matrix obtained from the identity
matrix by adding $\lambda$ in position $(1,2)$. Thus
\[
    E_{1,2}(\lambda)X
    =
    (x_1+\lambda x_2,x_2,\ldots,x_m)^T.
\]
Consider the sequence
$$
x_1, x_2, \ldots, x_m,
\quad
x_1+\lambda x_2,x_2,\ldots, x_m.
$$
Again the recurrence steps are invertible lower companion steps, and hence, $E_{1,2}(\lambda)\in \GLC(m,\z)$. 
\end{itemize}

\vspace{2mm}

Since the coordinate cycle $M_{1,0,\ldots,0}$ and the transposition
$T_{1,2}$ both belong to $\GLC(m,\z)$, the group $\GLC(m,\z)$ contains
all coordinate permutation matrices. Therefore, conjugating $E_{1,2}(\lambda)$ by permutation matrices
shows that every Chevalley matrix $E_{i,j}(\lambda)$ belongs to
$\GLC(m,\z)$. Hence every elementary
unimodular matrix belongs to $\GLC(m,\z)$.

Now let $A\in\Mat(m,\z)$. By Smith normal form, $A=U^{-1}DV^{-1},$ where $U,V$ are products of elementary unimodular matrices and $D$ is
diagonal. Now we know that $U,V \in \GLC(m,\z)$ and we have shown in Remark~\ref{Remark-inverse} that $ U^{-1},V^{-1}\in\GLC(m,\z)\subset\LC(m,\z),$ and that $D\in\LC(m,\z).$ Since $\LC(m,\z)$ is a semigroup, it follows that $A=U^{-1}DV^{-1}\in\LC(m,\z).$ Therefore, $\LC(m,\z)=\Mat(m,\z).$

If, in addition, $A\in\GL(m,\z)$, then in its Smith normal form the diagonal
matrix $D$ is unimodular. Hence, by the diagonal case,
$D\in\GLC(m,\z).$
Therefore, $A=U^{-1}DV^{-1}\in\GLC(m,\z),$
and consequently $\GLC(m,\z)=\GL(m,\z).$ This completes the proof.
\end{proof}

\subsection{Reduced matrices and LLS-sequences}
\label{Semigroup of reduced matrices}

In this subsection we briefly discuss the
semigroup $\GLC_{\geq0}(2,\z)$, which appears in Gauss reduction theory for
$\GL(2,\z)$. Its elements may be viewed as a discrete analogue of Jordan normal
forms for invertible integer matrices.

In dimension \(2\), the invertible nonnegative lower companion matrices are $\begin{psmallmatrix}
    0 & 1\\
    1 & q
    \end{psmallmatrix}$ for $q\in\z_{\geq0}$.

We shall focus on the positive subsemigroup generated by those $\begin{psmallmatrix}
    0 & 1\\
    1 & q
    \end{psmallmatrix}$ with
\(q\geq1\). This subsemigroup admits a particularly simple description in
terms of reduced matrices.

\begin{definition}\label{def-reduced-matrix}
A matrix
\[
    M=
    \begin{pmatrix}
    a & b\\
    c & d
    \end{pmatrix}
    \in \GL(2,\z)
\]
is called \textit{reduced} if $d \geq c\geq a \geq 0$ and $d \geq b \geq a \geq 0$.
\end{definition}

\begin{remark}
Here a matrix in \(\GL(2,\z)\) is called \emph{hyperbolic} if none of
its eigenvalues has absolute value \(1\). Every hyperbolic matrix in
\(\GL(2,\z)\) with positive trace is integrally conjugate to at least
one reduced matrix, and each integral conjugacy class contains only
finitely many reduced matrices. In other words, for such a matrix
\(A\), there exist finitely many reduced matrices of the form $U^{-1}AU$ for $U\in\GL(2,\z),$ and at least one such matrix exists.
\end{remark}

\begin{proposition}\label{PLLS-prop}
For every reduced matrix $M$, there exists a unique $n \geq 1$ and a unique finite sequence of
positive integers $(a_1,\ldots, a_n)$ such that 
\[
M=\prod_{i=1}^n
\begin{pmatrix}
0 & 1\\
1 & a_{n-i+1}
\end{pmatrix}.
\]
Conversely, any matrix in $\GL(2,\z)$ admitting such a positive companion factorization is reduced.
\end{proposition}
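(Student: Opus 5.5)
The plan is to run the classical Euclidean/Gauss reduction argument directly on reduced matrices by peeling off one factor at a time. Write $P_q=\begin{psmallmatrix}0&1\\1&q\end{psmallmatrix}$. For $M=\begin{psmallmatrix}a&b\\c&d\end{psmallmatrix}$ we compute
\[
P_q^{-1}M=\begin{pmatrix}-q&1\\1&0\end{pmatrix}M=\begin{pmatrix}c-qa & d-qb\\ a& b\end{pmatrix},
\]
so left multiplication by $P_q$ sends $\begin{psmallmatrix}a'&b'\\c'&d'\end{psmallmatrix}$ to $\begin{psmallmatrix}c'&d'\\a'+qc'&b'+qd'\end{psmallmatrix}$. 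Since the product is written as $P_{a_n}P_{a_{n-1}}\cdots P_{a_1}$, the leftmost factor $P_{a_n}$ is the one we peel off first.

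\textbf{Converse (and base cases).} We prove by induction on $n$ that any product $P_{a_n}\cdots P_{a_1}$ with all $a_i\geq1$ is reduced. For $n=1$, $P_{a}=\begin{psmallmatrix}0&1\\1&a\end{psmallmatrix}$ satisfies $a\geq1\geq0$ and $a\ge 1\ge 0$, so it is reduced. Inductively, assume $N=\begin{psmallmatrix}a'&b'\\c'&d'\end{psmallmatrix}$ is reduced and has nonnegative entries, and consider $P_qN=\begin{psmallmatrix}c'&d'\\a'+qc'&b'+qd'\end{psmallmatrix}$ with $q\geq1$. We need:
\begin{itemize}
\item $b'+qd'\geq a'+qc'\geq c'\geq0$, which follows from $d'\ge c'$, $b'\ge a'$ and $q\ge1$;
\item $b'+qd'\geq d'\geq c'\geq0$, which follows from $b'\ge 0$ and $d'\ge c'$.
\end{itemize}
The product also stays in $\GL(2,\z)$, since each factor has determinant $-1$.

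\textbf{Existence.} We induct on the size of $d$ (or on $a+b+c+d$). Given a reduced $M$ with $\det M=\pm1$, choose $q=\lfloor d/b\rfloor$ (see the $b=0$ case below) and set $N=P_q^{-1}M$. We must check that $N$ is reduced, i.e.
\[
b\geq a,\qquad b\geq d-qb,\qquad c-qa\geq a\ \text{or}\ \ldots
\]
Here the hard part appears. The natural choice of $q$ must simultaneously make
\[
0\le c-qa\le a\le b \quad\text{and}\quad c-qa\le d-qb\le b.
\]
These conditions force $q$ to be determined by $d$ and $b$, or equivalently by $c$ and $a$, and we must show the two quotients agree. I would use $|ad-bc|=1$ together with $d\ge b\ge a$ and $d \ge c\ge a$ to prove $\lfloor c/a\rfloor=\lfloor d/b\rfloor$ up to the boundary cases. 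The key estimate is that $c/a$ and $d/b$ differ by $1/(ab)$, so no integer lies strictly between them except in degenerate cases, which are handled separately.

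\textbf{Degenerate cases and termination.} The degenerate cases are $a=0$ (which forces $bc=1$, so $M=P_d$ and the product has length $n=1$) and $q\ge 1$ (which holds since $d\ge b$). The entries strictly decrease under the reduction, so the process terminates at some $P_{a_1}$.

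\textbf{Uniqueness.} In any factorization the leftmost factor is forced, because the bottom row of $M$ equals $(a'+qc',\,b'+qd')$ with $N$ reduced, and hence $q=\lfloor d/b\rfloor$ is determined by the reduced inequalities; a possible edge ambiguity at $d'=b'$ needs checking. Induction then finishes the argument. The main obstacle is the existence step: verifying that the reduced inequalities for $N$ hold with the unimodular condition handling the borderline divisions.
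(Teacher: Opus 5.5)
There is a genuine gap: your converse argument is correct, but the existence step, which is the actual content of the proposition, is only announced and not carried out. Worse, the rule you commit to is wrong in a case you did not flag. For $a=b=1$, i.e.\ $M=\begin{psmallmatrix}1&1\\c&c+1\end{psmallmatrix}=P_cP_1$, the choice $q=\lfloor d/b\rfloor=c+1$ gives $N=\begin{psmallmatrix}-1&0\\1&1\end{psmallmatrix}$, which is not reduced. Already for the Cohn matrix $\begin{psmallmatrix}1&1\\1&2\end{psmallmatrix}=P_1P_1$ you would peel off $P_2$ instead of $P_1$.

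Your observation that no integer lies strictly between $c/a$ and $d/b$ is right. However, it gives $\lfloor c/a\rfloor=\lfloor d/b\rfloor$ only when neither ratio is an integer. Since $\det M=\pm1$ forces $\gcd(a,c)=\gcd(b,d)=1$, the genuine boundary cases are $a=1$ or $b=1$, not $a=0$; and ``$q\ge1$'' is something to verify, not a case.

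A correct rule for $a\ge1$ is to let $q$ be the largest integer with $qb<d$, so that $qb<d\le(q+1)b$. Then $qa\le c\le(q+1)a$ follows from the no-integer-between fact; the case $b=1$, where $a=1$ and $d=c+1$, is checked by hand. The remaining inequality $a'\le b'$ for $N=\begin{psmallmatrix}a'&b'\\a&b\end{psmallmatrix}$ follows from $a'b-b'a=\det N=\mp1$. Indeed, if $a'\ge b'+1$ then $a'b-b'a\ge b'(b-a)+b\ge b$, which forces $b=1$, again settled directly. You also need $d>b$ whenever $a\ge1$: if $d=b$ then $b(a-c)=\pm1$ leads to a contradiction with $d\ge c\ge a$. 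This gives $q\ge1$ and the strict decrease needed for termination.

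Your uniqueness paragraph repeats the incorrect formula $q=\lfloor d/b\rfloor$ and leaves the ambiguity open, but it closes in one line. For $a,b\ge1$, reducedness of $N$ forces $0\le c-qa\le a$ and $0\le d-qb\le b$, and these cannot hold for two values of $q$. Two solutions would have to be $q$ and $q+1$, which gives $(c,d)=(q+1)(a,b)$ and hence $ad-bc=0$.

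You must also exclude ambiguity in the length. In $M=P_qN$ the cofactor $N$ is either $I$ (when $n=1$) or, by your converse, reduced. A reduced $N$ has bottom-left entry $c'\ge1$, since otherwise $a'=c'=0$ and $\det N=0$. Because the top row of $M$ is the bottom row of $N$, the case $n=1$ occurs exactly when $a=0$.

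With these repairs your Euclidean peeling argument becomes a complete proof. Note that the paper itself gives no proof of this proposition and only refers to the literature.
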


This is a classical statement; we refer the interested reader to~\cite{OK2022}.

\begin{definition}
The sequence $(a_1,\ldots,a_n)$ in Proposition~\ref{PLLS-prop} is called
the \textit{PLLS-sequence} of $M$. The bi-infinite periodic sequence with
period $(a_1,\ldots,a_n)$ is called the \textit{LLS-sequence} of $M$ and
is denoted by $\LLS(M)$.
\end{definition}

\begin{remark}
The ordering convention for the PLLS-sequence is chosen to agree with
matrix-vector multiplication: the entry $a_1$ corresponds to the
\emph{last} companion matrix factor in the product in
Proposition~\ref{PLLS-prop}.
\end{remark}

\begin{example}
Let
\[
    M=
    \begin{pmatrix}
    2 & 7\\
    5 & 17
    \end{pmatrix}
    =
    \begin{pmatrix}
    0 & 1\\
    1 & 2
    \end{pmatrix}
    \begin{pmatrix}
    0 & 1\\
    1 & 2
    \end{pmatrix}
    \begin{pmatrix}
    0 & 1\\
    1 & 3
    \end{pmatrix}.
\]
The PLLS-sequence of $M$ is $(3,2,2)$, and
$\LLS(M)=(\ldots,3,2,2,3,2,2,3,2,2,\ldots)$.
\end{example}

\begin{remark}
Since each factor $\bigl(\begin{smallmatrix}0&1\\1&a\end{smallmatrix}\bigr)$
is a unimodular companion matrix, Proposition~\ref{PLLS-prop} implies that
the set of reduced matrices is closed under multiplication. Hence reduced
matrices form a semigroup.
\end{remark}

We conclude this subsection with the following open problem.

\begin{problem}
Describe which square integer matrices have integer conjugates lying in
$\GLC_{\geq0}(m,\z)$ or in $\LC_{\geq0}(m,\z)$.
\end{problem}

To the best of our knowledge, even the case $m=3$ has not been fully
explored.

\subsection{Ranks of wug-tiles representing matrices}
\label{A few words on the rank of wug-tiles representing matrices}
Following the structure theorem of matrix decomposition in Subsection \ref{Lower companion matrices decomposition}, we start with the following construction.

\begin{definition}\label{def:body-from-decomposition}
Let $A\in\LC(m,\z)$, and let $A=M_k\cdots M_1$ be a decomposition into
lower companion matrices. Set $\M=(M_1,\ldots,M_k)$. The body of the
wug-snake graph associated with the recurrence system $E(\M)$ is denoted
by $B_{\M}(A)$.
\end{definition}
We present in the following example two realizations of a matrix in $\Mat(2,\z)$.
\begin{example}
Consider the following matrix and its decomposition into product of lower companion matrices
$$
M=
\begin{pmatrix}
    3 & 2\\
    4 & 3
    \end{pmatrix}
    =
    (M_{-1,2}M_{1,1})^2.
$$
Fix the canonical head \(H_{(x,y)}\) with state vector
\((x,y)^T\). The full wug-snake graph
\(H_{(x,y)}B_{\mathrm{can}}(M)\) associated with the canonical tile has
weight matrix $w_1$. Since its nonzero upper-triangular entries have width at most \(3\), and
the entry in position \((2,4)\) is nonzero, this realisation has rank
\(3=2m-1\), where \(m=2\). 

On the other hand, the decomposition $(M_{-1,2}M_{1,1})^2$ gives a rank-\(2\) realisation. Let $\M
=
\bigl(
M_{1,1},M_{-1,2},M_{1,1},M_{-1,2}
\bigr)$. The full wug-snake graph
\(H_{(x,y)}B_{\M}(M)\) has weight matrix $w_2$:
\[
w_1 = \begin{pmatrix}
x&y&0&0\\
1&0&3&2\\
0&1&4&3\\
0&0&1&0
\end{pmatrix};
\qquad\qquad
w_2=\begin{pmatrix}
x&y&0&0&0&0\\
1&0&1&0&0&0\\
0&1&1&-1&0&0\\
0&0&1&2&1&0\\
0&0&0&1&1&-1\\
0&0&0&0&1&2
\end{pmatrix}.
\]
Thus \(M\) admits both a canonical rank-\(3\) realisation and a
rank-\(2\) realisation arising from its lower companion factorisation.

We return to this factorisation in
Subsection~\ref{Classical snake graphs as 3x3 systems}.

\end{example}

For \(A\in\Mat(m,\z)\), define
\[
    \rho(A)
    =
    \min\{
        \operatorname{rank}(B):
        B\in\mathcal T_m(A)
        \text{ and }B\text{ is nonempty}
    \}.
\]

\begin{theorem}\label{transFrob-tiles}
Let \(m\geq2\) and \(A\in\Mat(m,\mathbb Z)\). Then
\begin{enumerate}[label=\textup{(\roman*)}]
    \item \(A\) admits a polyomino wug-tile representation of rank at
    most \(m\).

    \item More precisely, for every decomposition $A=M_k\cdots M_1$
    into lower companion matrices, the body \(B_{\M}(A)\) is a
    polyomino wug-tile for \(A\) of rank at most \(m\).

    \item If $Ae_1\neq0$ for $e_1=(1,0,\ldots,0)^T$, then $ \rho(A)=m.$
\end{enumerate}
\end{theorem}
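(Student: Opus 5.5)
Parts (i) and (ii) come together from Theorem~\ref{thm:lc-mat-glc-ll}(i), which gives $A\in\LC(m,\z)$, and from the recurrence realisation of Theorem~\ref{Perfect matching of a sequence -- theorem}.

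Fix a decomposition $A=M_k\cdots M_1$ into lower companion matrices, and let $\M=(M_1,\ldots,M_k)$. For any $x\in\z^m$, take a head $H\in\Head(x)$, for instance the canonical realisation $H_x$. Then form $K=HB_{\M}(A)$, the wug-snake graph of Definition~\ref{Perfect matching of a sequence -- definition} associated with the recurrence system $E(\M)$.

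By Theorem~\ref{Perfect matching of a sequence -- theorem}, the perfect matching sequence of $K$ is the recurrence sequence $x_1,\ldots,x_{m+k}$. The matrix identity preceding Definition~\ref{Frob-to-E} says that the window of the last $m$ terms after step $t$ is obtained by applying the companion factor. Hence the window after all $k$ steps is $M_k\cdots M_1x=Ax$. So $v_1(HB_{\M}(A))=Ax$, that is, $M^m_{B_{\M}(A)}=A$.

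Two things must be checked here. First, the neck-length of $B_{\M}(A)$ is at most $m$, which holds because column $j>m$ only uses rows $i\ge j+1-m$. Second, the state vector of a body with $k<m$ steps should still be read as the last $m$ terms of the full sequence. I expect this to be harmless, since the definition of $v_1$ only uses the final $m$ entries. The rank bound $\le m$ (with the body's neck included) is exactly the rank statement of Theorem~\ref{Perfect matching of a sequence -- theorem}. This proves (ii), and (i) follows.

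For (iii), the upper bound $\rho(A)\le m$ follows from (i), provided the body constructed is nonempty. It is, because $k\ge1$: if needed, we can write $A$ with at least one factor.

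The lower bound is the main point. Suppose $B\in\mathcal T_m(A)$ is nonempty with rank $r<m$. I would attach $B$ to the canonical head of $x=e_1$, so the sequence is $(1,0,\ldots,0)$ followed by $\mu$-values computed from the body. Column $j=m+1$, the first body column, has nonzero entries only in rows $i\ge j+1-r=m+2-r\ge 3$. Its value is therefore a combination of $\mu(K_{i-1})$ with $i-1\ge2$, and all of these are $0$.

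Inductively, every body column $j$ only sees the previous $r$ terms $\mu(K_{j-r}),\ldots,\mu(K_{j-1})$. Since $r<m$, the initial $1$ in position $1$ lies out of reach: it would require a window of width at least $m$ to connect index $1$ to index $m+1$. So the whole continuation is zero, and $v_1(HB)=0$. This gives $Ae_1=M_B^m e_1=0$, a contradiction. Hence $\rho(A)\ge m$.

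The obstacle is making this rank argument precise. I need to check that the rank of a body "including its neck data" really constrains column $j$ to rows $i\ge j-r+1$. I also need to handle bodies whose length is less than $m$, where $v_1$ mixes head entries with body entries. In that case $v_1$ contains the shifted zeros of $e_1$, and its first coordinate is taken from $x_{k+1}=0$ for $k\ge1$. The relevant claim is then that all terms beyond index $1$ are zero, so the window of the last $m$ terms is zero and the argument still yields $Ae_1=0$.
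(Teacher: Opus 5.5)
Your proposal is correct and follows essentially the same route as the paper: (i)--(ii) come from the lower companion factorisation of Theorem~\ref{thm:lc-mat-glc-ll}, the recurrence realisation, and its rank bound. For (iii), your computation with the head realising $e_1$ (all body terms vanish, so $v_1=0$ and $Ae_1=0$) is a concrete version of the paper's argument that a body of rank $r<m$ cannot see $x_1$, so the final state vector does not depend on $x_1$; your extra care about short bodies ($k<m$) and about row $1$, and hence $\mu(K_0)=1$, never being reached is a harmless refinement of the same idea.
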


\begin{proof} By
Theorem~\ref{thm:lc-mat-glc-ll}\textup{(i)}, we have $\Mat(m,\z)=\LC(m,\z)$. Hence every matrix $A\in\Mat(m,\z)$ admits a decomposition
$A=M_k\cdots M_1$ into lower companion matrices.

The recurrence system $E(\M)$ acts on its state vector by successive
multiplication by $M_1,\ldots,M_k$. Hence the total action is multiplication by $M_k\cdots M_1=A$. Therefore $B_{\M}(A)$ is a polyomino wug-tile for $A$. Since
\(E(\M)\) has order \(m\), Theorem~\ref{Perfect matching of a sequence -- theorem}
shows that $\operatorname{rank}(B_{\M}(A))\leq m$. This proves
\textup{(i)} and \textup{(ii)}.

It remains to prove \textup{(iii)}. Let \(B\) be a nonempty polyomino
wug-tile for \(A\), and suppose for contradiction that $\operatorname{rank}(B)=r<m.$ Let $(x_1,\ldots,x_m)^T$ be the state vector before \(B\) is attached. By the definition of
rank and the perfect-matching recurrence, the first new term produced
by the body depends only on $x_{m+1-r},\ldots,x_m.$ In particular, it is independent of \(x_1\). Every subsequent new term
is a linear combination of terms that are themselves independent of
\(x_1\). Since \(B\) is nonempty, the entry \(x_1\) is removed from the state vector after the first recurrence step and cannot influence any later term.

It follows that the final state vector is independent of \(x_1\).
Equivalently, $Ae_1=0$, contrary to the hypothesis. Hence every nonempty polyomino wug-tile
for \(A\) has rank at least \(m\). Part \textup{(i)} provides one of
rank at most \(m\), so $\rho(A)=m.$
\end{proof}

\begin{remark}
By Proposition~\ref{rank-canonic}, the canonical body associated with
\(A\in\Mat(m,\z)\) has rank at most \(2m-1\). Theorem~\ref{transFrob-tiles}
shows that every integer matrix admits another polyomino wug-tile
representation of rank at most \(m\). If \(Ae_1\neq0\), this bound is
optimal among nonempty polyomino wug-tiles.
\end{remark}

We conclude this subsection with some examples of matrices in $\Mat(2,\z)$ that admit 
rank-$2$ polyomino wug-tile representations.

\begin{example}
Consider the second Cohn matrix
$
A=
\begin{pmatrix}
    3 & 2 \\
    4 & 3
\end{pmatrix}.
$ Write $M_{1,q} = \begin{pmatrix}
        0&1\\1&q
    \end{pmatrix}$, we have
\[
    A= M_{1,1}M_{1,2}M_{1,1}M_{1,0}
\]
All these factors have nonnegative entries. Therefore the matrix belongs to $\GLC_{\geq0}(2,\z)$ and admits a rank-2 tile with nonnegative weights. But $A$ is not reduced in the sense of Definition \ref{def-reduced-matrix}, because it is not in the smaller semigroup $\langle M_{1,q}: q \geq 1\rangle$.
\end{example}

\begin{example}
Consider
\[
    A=M_{-1,2}
    =
    \begin{pmatrix}
        0&1\\
        -1&2
    \end{pmatrix}.
\]
Since \(A\) is itself a lower companion matrix, it admits a rank-\(2\)
polyomino wug-tile representation. Moreover, $ Ae_1= (0,-1)^{T}\neq0,$
so Theorem~\ref{transFrob-tiles} implies that its minimal possible rank
is \(2\).

On the other hand, a polyomino wug-tile with nonnegative weights has an
entrywise nonnegative snake operator: this follows inductively from the
perfect-matching recurrence. Since \(A\) has a negative entry, it cannot
be represented by a polyomino wug-tile with nonnegative weights.
\end{example}
%\begin{remark}
%Theorem~\ref{MD-det} and %Corollary~\ref{alg-geo-Markov-det-wug} %apply to
%every matrix in $\LC(n,\z)$: its Markov-Davenport values and Markov numbers
%are expressible as wug-snake determinants.
%\end{remark}

%\begin{example}
%The matrix $M$ of Example~\ref{exM111M1015} belongs to $\LC(3,\z)$ via the
%decomposition $M=M_{1,1,1}\,M_{1,0,1}^{\,5}$. The corresponding body
%$B_{\M}(M)$ is the polyomino wug-tile used in that example.
%\end{example}

\subsection{A remark on CW-complexes and face matchings}
\label{Wug-snake graphs for CW-complexes}

We briefly indicate a possible geometric extension of the wug-snake
framework. The discussion is informal and intended to suggest a direction
for future work rather than to develop a complete theory.
\begin{definition}
Let $T$ be a three-dimensional CW-complex. A \textit{perfect
face-matching} of $T$ is a collection of pairwise disjoint two-dimensional
faces such that every vertex of $T$ belongs to exactly one chosen face. The
number of perfect face-matchings of $T$ is denoted by $\mu_2(T)$.
\end{definition}

The wug-snake construction can be extended to sequences of three-dimensional
boxes whose edges are connected by quadrilateral faces. For instance, a
recurrence of the form $x_n = a\,x_{n-1} + b\,x_{n-2} + c\,x_{n-3}$ admits
the following face-matching decomposition:
\[
    \begin{aligned}
    \vcenter{\hbox{\includegraphics[width=5cm]{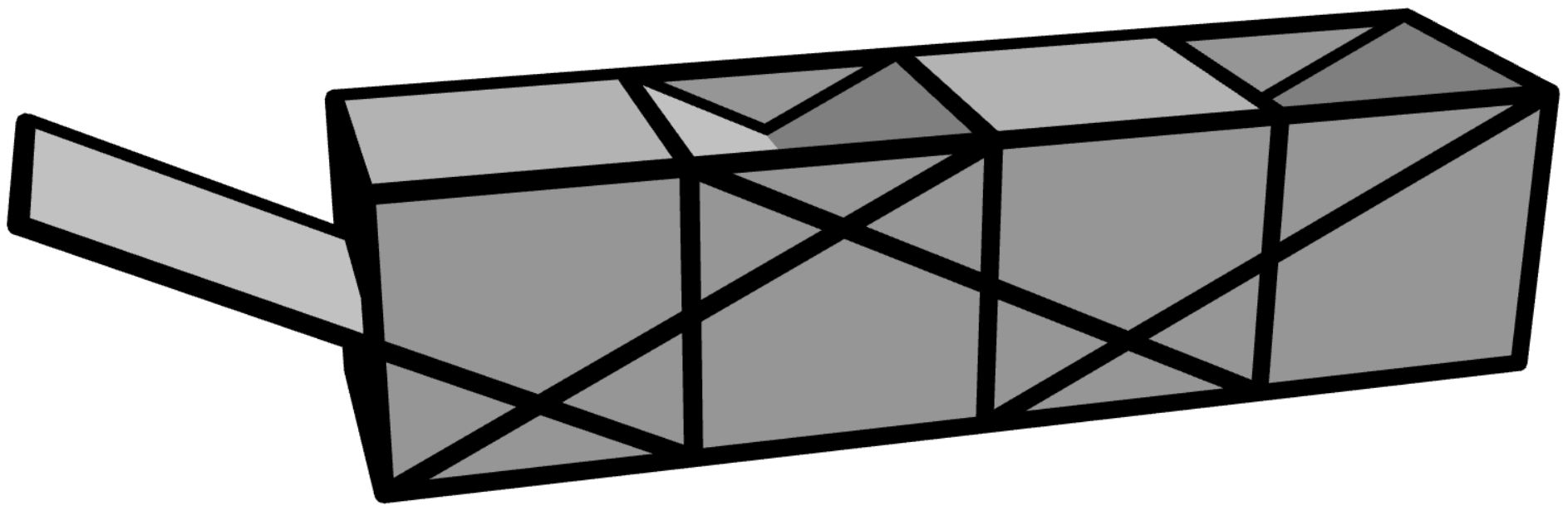}}}
    &=
    \vcenter{\hbox{\includegraphics[width=5cm]{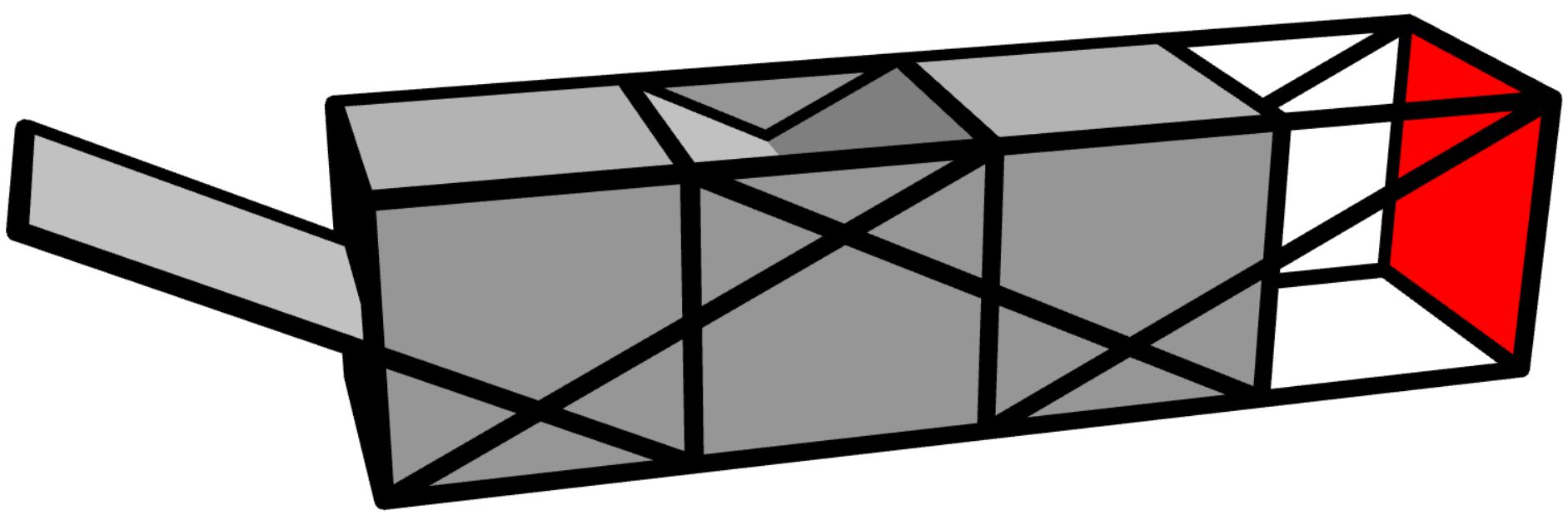}}} \\
    &\quad+
    \vcenter{\hbox{\includegraphics[width=5cm]{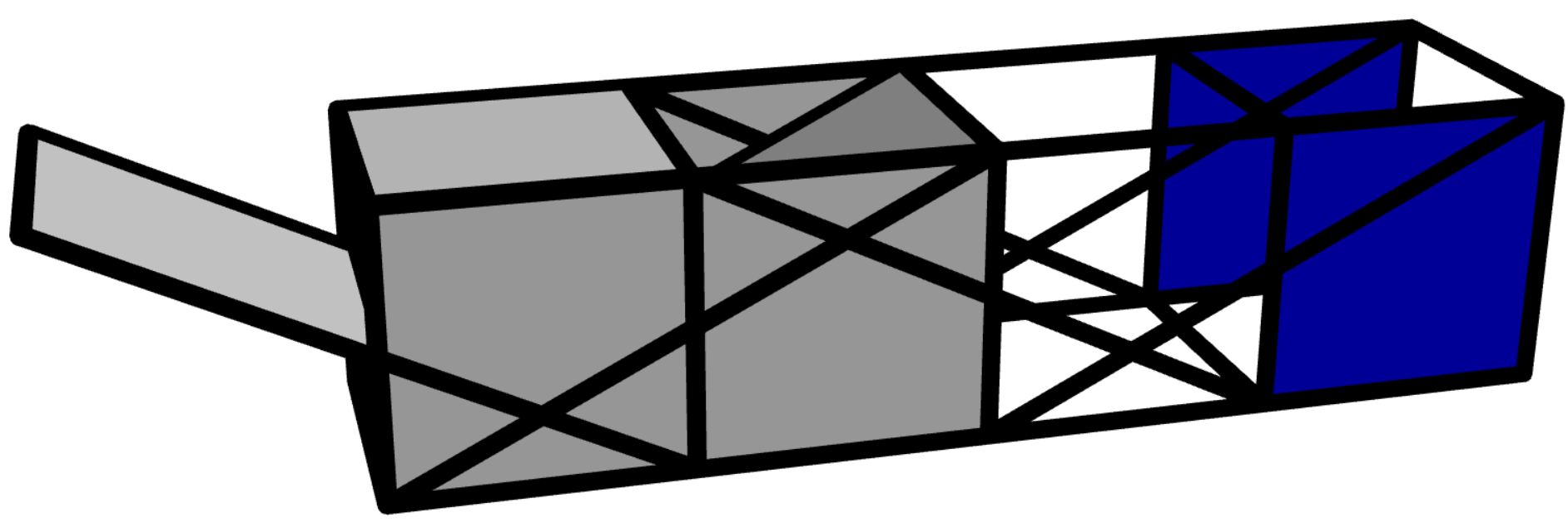}}} \\
    &\quad+
    \vcenter{\hbox{\includegraphics[width=5cm]{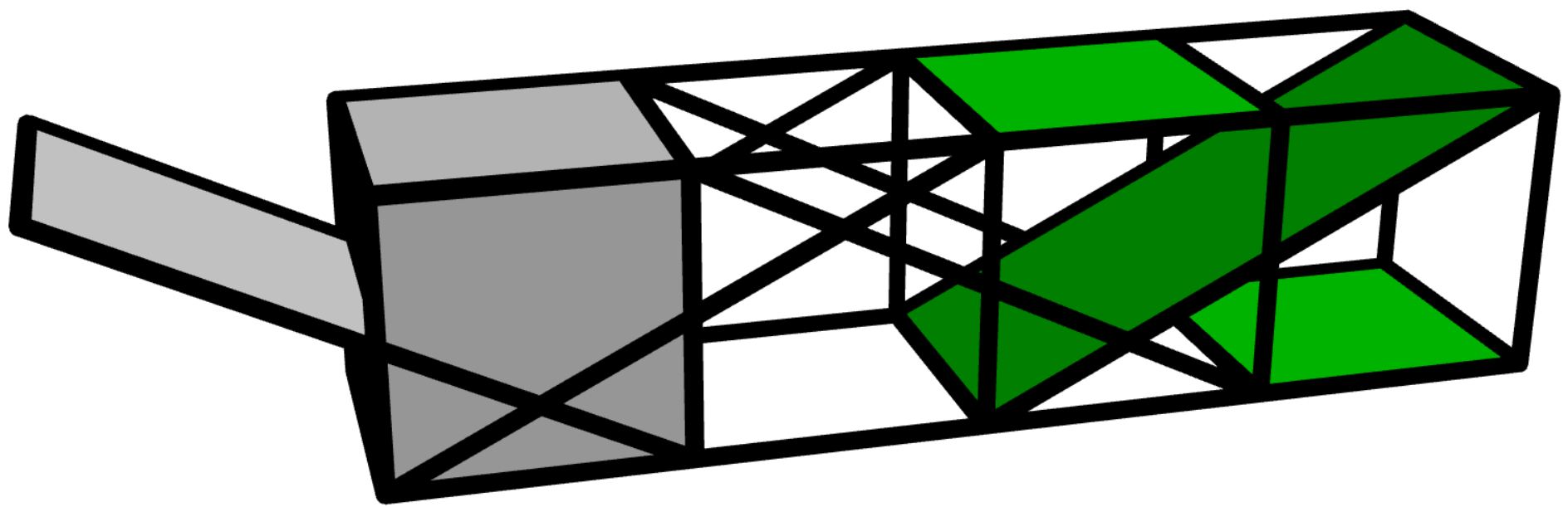}}}.
    \end{aligned}
\]
The coefficients $a$, $b$, $c$ appear as weights on the red, blue, and
green faces, respectively.
Namely, $a$ is the weight for the red face; $b$ and $1$ are weights on the blue faces;
$a$, $1$ and $1$ are weights on the green faces. 
The grey faces of the summands on the right hand side of the equation have the same weights as the corresponding faces on the left hand side. 

This CW-complex viewpoint does not produce new Markov numbers: the
resulting face-matching count reduces to the weighted perfect matching number
of the corresponding wug-snake graph:
\[
    \includegraphics[height=2.5cm]{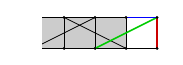}
\]
Nevertheless, the three-dimensional realisation may offer geometric insight
into lattice configurations that is not visible from the abstract graph
alone.

\begin{problem}
Develop a theory of perfect face-matchings in CW-complexes, relating their
enumeration to lattice geometry in $\r^n$.
\end{problem}

\section{Markov numbers of semigroups: definitions and examples}
\label{Markov numbers of semigroups: definitions and examples}

In this section we extend the classical Frobenius map on Markov numbers
to finitely generated matrix semigroups. The basic input is a subdivision
scheme, which plays the role of the Farey tree, together with a choice of
matrix generators. This produces a Cohn-type map from the vertices of the
resulting tessellation to the semigroup. Composing this map with the
geometric or algebraic Markov number gives the corresponding Frobenius map
for the semigroup.

\vspace{2mm}

After providing general definitions and notation, we discuss several examples of old and new wug-snake graph constructions. We
first recover the classical Markov numbers from semigroups of $2\times2$
matrices and from their wug-snake realisations. We then consider semigroups
generated by matrices with PLLS-sequences $(a,a)$ and $(b,b)$. Finally
we study a three-generator example whose wug-snake graphs admit natural
embeddings into $\z^3$.

\subsection{Basic definitions}
\label{Markov numbers of semigroups: basic definitions}
\subsubsection{Farey sums of rational vectors}
\label{Farey addition of rational vectors}

Let $v\in\q^n$. A representation
\[
    v=
    \left(
    \frac{p_1}{q},\frac{p_2}{q},\ldots,\frac{p_n}{q}
    \right)
\]
is called a \textit{regular fractional representation} of $v$ if
$p_1,\ldots,p_n$ are integers, $q$ is a positive integer, and $\gcd(p_1,\ldots,p_n,q)=1$.

\begin{definition}
Let $v_1,\ldots,v_k\in\q^n$, and write their regular fractional
representations as
\[
    v_i=
    \left(
    \frac{p_{1,i}}{q_i},
    \frac{p_{2,i}}{q_i},
    \ldots,
    \frac{p_{n,i}}{q_i}
    \right),
    \qquad i=1,\ldots,k.
\]
For nonnegative integers $\lambda_1,\ldots,\lambda_k$ that are not all zero, the
\textit{Farey sum with multiplicities} \(\lambda_1,\ldots,\lambda_k\) is defined by
\[
    \bigoplus_{i=1}^k \lambda_i v_i
    =
    \left(
    \frac{\sum_{i=1}^k \lambda_i p_{1,i}}
         {\sum_{i=1}^k \lambda_i q_i},
    \frac{\sum_{i=1}^k \lambda_i p_{2,i}}
         {\sum_{i=1}^k \lambda_i q_i},
    \ldots,
    \frac{\sum_{i=1}^k \lambda_i p_{n,i}}
         {\sum_{i=1}^k \lambda_i q_i}
    \right).
\]
\end{definition}
\begin{remark}
The denominator in the preceding definition is positive because
\(q_i>0\) and the multiplicities are not all zero. Moreover,
\[
    \bigoplus_{i=1}^k\lambda_i v_i
    =
    \frac{\sum_{i=1}^k\lambda_iq_i v_i}
         {\sum_{i=1}^k\lambda_iq_i},
\]
so the Farey sum belongs to the convex hull of
\(v_1,\ldots,v_k\).
\end{remark}

\subsubsection{Affine subdivision schemes and tessellations}

Let \(\Aff_{n+1}(\mathbb Q^n)\) denote the set of ordered rational
\(n\)-simplices, that is, ordered \((n+1)\)-tuples of affinely
independent points in \(\mathbb Q^n\).

We say that triangulations of
two simplices $\sigma_1$ and $\sigma_2$ with ordered vertices 
are {\it combinatorially equivalent}
if there exist a homeomorphism of $\sigma_1$ to $\sigma_2$ which preserves the structure of faces and the order of vertices and which maps the triangulation of $\sigma_1$ to the triangulation of $\sigma_2$.

\begin{definition}
Let $k$ be a positive integer. A map
\[
    \Lambda:
    \Aff_{n+1}(\mathbb Q^n)
    \longrightarrow
    \bigl(\Aff_{n+1}(\mathbb Q^n)\bigr)^k
\]
is called a \emph{subdivision scheme into \(k\) simplices} if, for every
simplex $\sigma$, the $k$ simplices in $\Lambda(\sigma)$ form a
triangulation of $\sigma$, and all such triangulations are
combinatorially equivalent.
\\
By $|V(\Lambda)|$ we denote the number of vertices in the
subdivision of a simplex under $\Lambda$.
\end{definition}

\begin{definition}
Let $\Lambda$ be a subdivision scheme. A subset
$W\subset\Aff_{n+1}(\mathbb Q^n)$ is called \textit{aligned with}
$\Lambda$ if, for every simplex $s\in W$, all simplices in
$\Lambda(s)$ also belong to $W$. We denote the set of all such subsets
by $\Al(\Lambda)$.
\end{definition}

\begin{definition}
%[Affine subdivision scheme]
A subdivision scheme $\Lambda$ is called an
\emph{affine Farey subdivision scheme} if there exist
nonnegative integer vectors
\[
    (\lambda_{1,j},\ldots,\lambda_{n+1,j}),
    \qquad j=1,\ldots,|V(\Lambda)|,
\]
such that, for every simplex with vertices $v_1,\ldots,v_{n+1}$, the
vertices $w_1,\ldots,w_{|V(\Lambda)|}$ appearing in its subdivision are
given by
\[
    w_j=\bigoplus_{i=1}^{n+1}\lambda_{i,j}v_i,
    \qquad
    j=1,\ldots,|V(\Lambda)|.
\]
The matrix $(\lambda_{i,j})$ is called the \textit{subdivision matrix} of
$\Lambda$.
\end{definition}

\begin{remark}
Multiplying all entries of the subdivision matrix by a common positive
scalar does not change the corresponding subdivision rule. In what follows,
we assume that a subdivision matrix has been fixed for each affine
subdivision scheme.
\end{remark}

\begin{definition}
Let \(\sigma\) be an ordered rational simplex and let $\Lambda$ be an affine subdivision
scheme. The \textit{tessellation of $\sigma$ with respect to $\Lambda$}
is the smallest aligned collection of simplices containing $\sigma$:
\[
    \mathcal T_\Lambda(\sigma)
    =
    \bigcap_{\substack{W\in\Al(\Lambda)\\ \sigma\in W}} W.
\]
We denote by $V(\mathcal T_\Lambda(\sigma))$ the set of all vertices of
all simplices in this tessellation.
\end{definition}

\subsubsection{Examples of tessellations}

\begin{example}[Classical Farey tessellation]
The classical Farey tessellation starts with the segment $\sigma=[0,1]$.
The subdivision rule is
\[
    \Lambda([a,b])=([a,c],[c,b]),
    \qquad
    c=a\oplus b.
\]
The subdivision matrix, with vertices ordered as $(a,b,c)$, is
$
    \begin{pmatrix}
    1&0&1\\
    0&1&1
    \end{pmatrix}.
$
\end{example}

\begin{example}[Farey tessellations of a two-simplex]
\label{Farey tessellations examples}
Let $\sigma$ be the two-simplex with vertices $a,b,c$. We describe
three standard Farey-type subdivision rules.

\begin{itemize}
\item \textbf{Simultaneous Farey summation.}
Set
$
    d=a\oplus b\oplus c.
$
Then the simplex is subdivided into the three simplices
\[
    (a,b,d),\qquad (b,c,d),\qquad (c,a,d).
\]
The subdivision matrix, with vertices ordered as $(a,b,c,d)$, is
\[
    \begin{pmatrix}
    1&0&0&1\\
    0&1&0&1\\
    0&0&1&1
    \end{pmatrix}.
\]

\item \textbf{Pairwise Farey summation.}
Set
\[
    d=a\oplus b,\qquad e=a\oplus c,\qquad f=b\oplus c.
\]
The corresponding subdivision is
\[
    (a,d,e),\qquad (b,f,d),\qquad (c,e,f),\qquad (d,f,e).
\]
The subdivision matrix, with vertices ordered as $(a,b,c,d,e,f)$, is
\[
    \begin{pmatrix}
    1&0&0&1&1&0\\
    0&1&0&1&0&1\\
    0&0&1&0&1&1
    \end{pmatrix}.
\]

\item \textbf{Barycentric Farey summation.}
Set
\[
    d=a\oplus b,\qquad e=a\oplus c,\qquad f=b\oplus c,
    \qquad g=a\oplus b\oplus c.
\]
The corresponding subdivision is
\[
    (a,d,g),\quad (d,b,g),\quad (b,f,g),\quad
    (f,c,g),\quad (c,e,g),\quad (e,a,g).
\]
The subdivision matrix, with vertices ordered as $(a,b,c,d,e,f,g)$, is
\[
    \begin{pmatrix}
    1&0&0&1&1&0&1\\
    0&1&0&1&0&1&1\\
    0&0&1&0&1&1&1
    \end{pmatrix}.
\]
\end{itemize}

The three subdivisions are illustrated below:
\[
\begin{array}{c}
\includegraphics[width=2.5cm]{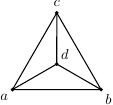}
\end{array}
\qquad
\begin{array}{c}
\includegraphics[width=2.5cm]{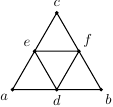}
\end{array}
\qquad
\begin{array}{c}
\includegraphics[width=2.5cm]{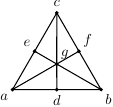}
\end{array}.
\]
\end{example}

\subsubsection{Cohn maps and Farey sets for semigroups}

Fix an ordered rational simplex $\sigma=(e_1,\ldots,e_{n+1})$ and an affine Farey subdivision scheme \(\Lambda\), with subdivision
matrix \(L=(\lambda_{i,j})\).

\begin{definition}
Let \(S=\langle g_1,\ldots,g_{n+1}\rangle\) be a semigroup with an
ordered generating set. The \textit{Cohn map}
\[
    \mathcal F_{g_1,\ldots,g_{n+1}}^\Lambda:
    V(\mathcal T_\Lambda(\sigma))\to S
\]
is defined recursively as follows. On the vertices $e_1,\ldots,e_{n+1}$
of the initial simplex, set
\[
    \mathcal F_{g_1,\ldots,g_{n+1}}^\Lambda(e_i)=g_i.
\]
If a new vertex $w_j$ is obtained from vertices $v_1,\ldots,v_{n+1}$ by
\[
    w_j=\bigoplus_{i=1}^{n+1}\lambda_{i,j}v_i,
\]
then define
\[
    \mathcal F_{g_1,\ldots,g_{n+1}}^\Lambda(w_j)
    =
    \prod_{i=n+1}^{1}
    \left(
    \mathcal F_{g_1,\ldots,g_{n+1}}^\Lambda(v_i)
    \right)^{\lambda_{i,j}},
\]
where the product is taken in the displayed order, from $i=n+1$ down to $i=1$.
\end{definition}

\begin{definition}
The \emph{Farey set} associated with the ordered generators
\((g_1,\ldots,g_{n+1})\), the initial simplex \(\sigma\), and the
subdivision scheme \(\Lambda\) is
\[
    \mathcal F_{g_1,\ldots,g_{n+1}}^\Lambda(\sigma)
    :=
    \mathcal F_{g_1,\ldots,g_{n+1}}^\Lambda
    \left(
        V\bigl(\mathcal T_\Lambda(\sigma)\bigr)
    \right).
\]
\end{definition}

\subsubsection{Geometric and algebraic Frobenius maps for semigroups}

Recall that $m(A)$ and $\hat m(A)$ denote the geometric and algebraic
Markov numbers of a matrix $A$, respectively.

\begin{definition}
Let $A_1,\ldots,A_{n+1}\in\Mat(d,\z)$, and let $\Lambda$ be an affine Farey subdivision scheme. The
\textit{geometric Frobenius map} associated with $\langle A_1,\ldots,A_{n+1}\rangle$ is
\[
    \phi_{A_1,\ldots,A_{n+1}}^\Lambda:
    V(\mathcal T_\Lambda(\sigma))\to\r
\]
defined by
\[
    \phi_{A_1,\ldots,A_{n+1}}^\Lambda(v)
    =
    m\left(
    \mathcal F_{A_1,\ldots,A_{n+1}}^\Lambda(v)
    \right).
\]
Similarly, the \textit{algebraic Frobenius map} is
\[
    \hat\phi_{A_1,\ldots,A_{n+1}}^\Lambda(v)
    =
    \hat m\left(
    \mathcal F_{A_1,\ldots,A_{n+1}}^\Lambda(v)
    \right).
\]
The images of these maps are called the \textit{geometric} and
\textit{algebraic Markov sets} of the semigroup, respectively.
\end{definition}

\begin{example}
Let $\Lambda$ be the classical Farey tessellation, and consider the
semigroup
$
    \langle M_{1,1}^2, M_{1,2}^2 \rangle.
$
Then the corresponding  algebraic and geometric Frobenius maps both coincide the
classical Frobenius map. Hence their images are the classical Markov
numbers.
\end{example}

\begin{remark}
Equip $V\bigl(\mathcal T_\Lambda(\sigma)\bigr)$
with the subspace topology inherited from \(\sigma\). If the Cohn map is
injective, this topology can be transported to its Farey set. If,
in addition, the restriction of \(m\), respectively \(\hat m\), to the
Farey set is injective, then the corresponding geometric, respectively
algebraic, Markov set inherits the transported topology.
\end{remark}

\subsubsection{Semigroups and wug-snake realisations}

Once polyomino wug-tiles have been chosen for the generators of a semigroup,
Proposition~\ref{product of tiles} produces wug-snake realisations of their
products.

\begin{definition}
Let $S=\langle A_1,\ldots,A_n\rangle\subseteq\Mat(d,\mathbb Z)$ be a semigroup. A
\textit{polyomino realisation} of $S$ is a map
\[
    \polyomino:S\to\{\text{bodies of wug-snake graphs}\}
\]
such that:
\begin{itemize}
    \item \(\polyomino(M)\) is a polyomino wug-tile for \(M\), for every
\(M\in S\);
    \item for all $M,N\in S$,
    \[
        \polyomino(MN)=\polyomino(N)\cdot \polyomino(M).
    \]
\end{itemize}
\end{definition}

\begin{remark}
Tiles chosen for the generators always determine a polyomino realisation
of the free semigroup on the symbols \(A_1,\ldots,A_n\). Such a
realisation descends to the matrix semigroup \(S\) only if any two words
representing the same matrix determine the same body. Thus relations in
\(S\) impose compatibility conditions on the chosen generator tiles.
\end{remark}

\subsubsection{Markov semigroups}

Perfect matchings of classical snake graphs provide a combinatorial model
for the classical Markov numbers. By
Corollary~\ref{alg-geo-Markov-det-wug}, wug-snake graphs give an analogous
model for algebraic and geometric Markov numbers of matrices. This raises a
natural question: \textit{when do the algebraic and geometric Markov numbers
coincide?}

\vspace{2mm}

In~\cite{KvS2020}, the authors initiated the study of Markov semigroups of
reduced matrices generated by pairs of matrices. In particular, for positive
integers $a<b$, the semigroup $\langle M_{1,a}^2, M_{1,b}^2\rangle$
is Markov.

\begin{definition}
A matrix \(M\in \Mat(n,\z)\) is called \emph{Markov} if $m(M)=\hat m(M).$ A semigroup \(S \subset \Mat(n,\z)\) is \emph{Markov} if every matrix in its Farey set is Markov; equivalently, if $m(M)=\hat m(M)$ for every \(M\) in the Farey set of \(S\).
\end{definition}

This leads to the following open problems.

\begin{problem}
Study Markov semigroups of reduced matrices with three or more generators.
\end{problem}

\begin{problem}
Study Markov semigroups in higher dimensions.
\end{problem}

\subsection{Examples related to classical Markov numbers}
\label{Several semigroups for classical Markov numbers}

Let us start with four different wug-snake graph representations of the classical Markov numbers: reduced, transpose, Cohn, and $3\times 3$ representations. The first three are represented by new wug-snake graph constructions, and the fourth one corresponds to classical domino snake graphs.

\vspace{2mm}

Recall that the first few Markov numbers are
\[
    1,2,5,13,29,34,89,169,194,233,433,610,985,1325,1597,\ldots.
\]
\subsubsection{Reduced $2\times2$ matrices}

Consider the semigroup generated by 
$ R_1=M_{1,1}^2$ and $R_2=M_{1,2}^2$.
We use the canonical head representing the initial sequence $(0,1)$:
\[
    \includegraphics[height=1.5cm]{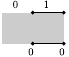}
\]
The numbers above/below the graph indicate the weights of the corresponding
horizontal/vertical edges.
The polyomino wug-tiles for the two generators are
\[
\polyomino(R_1)=
\begin{array}{c}
\includegraphics[height=1.5cm]{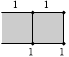}
\end{array},
\qquad
\polyomino(R_2)=
\begin{array}{c}
\includegraphics[height=1.5cm]{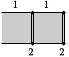}
\end{array}.
\]
In the resulting wug-snake graph, the corresponding Markov number appears
in the second-to-last square.

\begin{proposition}\label{prop:classical-reduced-model}
Let $\Lambda$ be the classical Farey tessellation. For the generating
pair $\langle R_1,R_2\rangle$ defined above, the geometric Frobenius map satisfies
\[
    \phi^\Lambda_{R_1,R_2}(v)=m_v
    \qquad
    \text{for all } v\in\q_{0,1},
\]
where $m_v$ is the classical Markov number with Farey index $v$.
\end{proposition}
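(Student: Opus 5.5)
The plan is to reduce the statement to Cohn's theorem by showing that the Cohn map for $\langle R_1,R_2\rangle$ produces matrices integrally conjugate to the Cohn matrices $C_v$, and then to use the $\GL(2,\z)$-invariance of the geometric Markov number (Remark~\ref{Non-sym-rem}).

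First I compute the generators. With $M_{1,q}=\begin{psmallmatrix}0&1\\1&q\end{psmallmatrix}$ we get
\[
R_1=M_{1,1}^2=\begin{pmatrix}1&1\\1&2\end{pmatrix}=C_{0/1},\qquad
R_2=M_{1,2}^2=\begin{pmatrix}1&2\\2&5\end{pmatrix}.
\]
The matrix $R_2$ has trace $6=3\cdot 2$. It is conjugate in $\GL(2,\z)$ to $C_{1/1}=\begin{psmallmatrix}3&2\\4&3\end{psmallmatrix}$, but not by a matrix fixing $R_1$.

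The natural approach is therefore to find a single $U\in\GL(2,\z)$ and an alternative Cohn-type normalisation with $U^{-1}R_1U$ and $U^{-1}R_2U$ equal to a standard Cohn pair. Alternatively, one can avoid conjugation entirely and work with traces.

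Consider the Cohn map for the classical Farey scheme. By definition, for $w=a\oplus b$ we have $\mathcal F(w)=\mathcal F(b)\mathcal F(a)$, so every Farey vertex $v$ is sent to a word in $R_1,R_2$. The pairs $(\mathcal F(a),\mathcal F(b))$ attached to Farey neighbours satisfy two invariants, which I would verify by induction along the Farey tree:
\[
\operatorname{tr}\bigl(\mathcal F(a)\mathcal F(b)\mathcal F(a)^{-1}\mathcal F(b)^{-1}\bigr)=-2,
\qquad
\operatorname{tr}\mathcal F(v)\equiv 0 \pmod 3.
\]
The Fricke identity then shows that the triples $\bigl(\tfrac13\operatorname{tr}\mathcal F(a),\tfrac13\operatorname{tr}\mathcal F(b),\tfrac13\operatorname{tr}\mathcal F(a\oplus b)\bigr)$ are Markov triples obeying the same Vieta mutations as the Markov tree. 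The base case is $(\tfrac13\cdot 3,\tfrac13\cdot 6)=(1,2)$ at $0/1$ and $1/1$, and at $1/2$ we get $\operatorname{tr}(R_2R_1)=15$, giving $5$. Hence $\tfrac13\operatorname{tr}\mathcal F(v)=m_v$. Since $\det\mathcal F(v)=1$, the discriminant of $f_{\mathcal F(v)}$ is $9m_v^2-4$.

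Next I would identify the geometric Markov number. A matrix $A\in\SL(2,\z)$ with trace $3m$ whose commutator data come from the Cohn construction is $\GL(2,\z)$-conjugate to the Cohn matrix $C_v$. I would prove this by exhibiting the conjugator inductively: it is the same $U$ for all $v$ if it works on the generators, or it is obtained from Aigner's uniqueness of Cohn matrices up to conjugation. Then $f_{\mathcal F(v)}$ is equivalent to $f_{C_v}=c_vx^2+(3m_v-2a_v)xy-m_vy^2$, which is (up to sign) the Markov form $f_{m;m_1,m_2}$ of Theorem~\ref{thm: markov 1879}. By the remark following that theorem, $m(f)=m_v$, and Remark~\ref{Non-sym-rem} transfers this to $\phi^\Lambda_{R_1,R_2}(v)=m(\mathcal F(v))=m_v$.

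The main obstacle is this conjugacy step. The pair $(R_1,R_2)$ is not literally a Cohn pair, so a clean version would find an explicit $U$, or a transposition or inversion trick, with $U^{-1}R_iU$ forming a Cohn-type generating pair whose products realise the Markov forms. I would first try $U$ built from $M_{1,1}$ (so that $R_2$ becomes $M_{1,1}^{-1}M_{1,2}^2M_{1,1}$ and similar words), and otherwise fall back on the reduced-matrix/LLS description from \cite{KvS2020}. There, $\mathcal F(v)$ is reduced with LLS period a Christoffel-type word in $(1,1)$ and $(2,2)$, and its minimum is attained and computed from the continued fraction of the attracting eigenvalue, matching Markov's theorem.
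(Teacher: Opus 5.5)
Your overall strategy is the right one: transport the problem to Cohn's matrices using the invariance of the geometric Markov number under $\GL(2,\z)$-conjugation and transposition, then apply Markov's theorem. It is essentially what the paper's one-line appeal to ``the classical reduced-matrix realisation'' amounts to. As written, however, the decisive conjugacy step is left open, and the reason you give for it being an obstacle is false. You assert that $R_2$ is conjugate to $C_{1/1}$ ``but not by a matrix fixing $R_1$''. In fact $M_{1,1}$ commutes with $R_1=M_{1,1}^2$, and $M_{1,1}R_2M_{1,1}^{-1}=C_{1/1}$; the paper records both identities in the remark of the subsection on PLLS-sequences $(a,a)$ and $(b,b)$.

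Without the conjugacy, everything rests on your Fricke/trace induction, and that cannot finish the proof. The facts $\tr\mathcal F(v)=3m_v$ and $\det\mathcal F(v)=1$ only fix the discriminant $9m_v^2-4$. But $m$ depends on the $\GL(2,\z)$-class, which the trace does not determine. For instance, $A=\begin{psmallmatrix}0&-1\\1&15\end{psmallmatrix}\in\SL(2,\z)$ has trace $15=3\cdot 5$, yet $f_A(x,y)=x^2+15xy+y^2$, so $m(A)\le 1<5=m_{1/2}$. The phrase ``whose commutator data come from the Cohn construction'' and your fallbacks (a uniqueness statement for Cohn matrices, or the LLS description of reduced matrices) do not supply this identification. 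You also never reconcile the order conventions: the Cohn map gives $\mathcal F(a\oplus b)=\mathcal F(b)\mathcal F(a)$, while $C_{r\oplus s}=C_rC_s$. So a naive conjugation yields the reversed word of $C_v$ rather than $C_v$.

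The repair is exactly the conjugation you say you would try first. The map $X\mapsto M_{1,1}^{-1}XM_{1,1}$ fixes $R_1=C_{0/1}=C_{0/1}^T$ and sends $R_2$ to $\begin{psmallmatrix}3&4\\2&3\end{psmallmatrix}=C_{1/1}^T$. The reversed multiplication order of the Cohn map is then precisely what transposition produces. For $v=r\oplus s$, induction on the Farey tree gives
\[
M_{1,1}^{-1}\mathcal F(v)M_{1,1}=\bigl(M_{1,1}^{-1}\mathcal F(s)M_{1,1}\bigr)\bigl(M_{1,1}^{-1}\mathcal F(r)M_{1,1}\bigr)=C_s^TC_r^T=(C_rC_s)^T=C_v^T .
\]
For example, $M_{1,1}^{-1}R_2R_1M_{1,1}=\begin{psmallmatrix}7&11\\5&8\end{psmallmatrix}=C_{1/2}^T$. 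Hence $m(\mathcal F(v))=m(C_v^T)=m(C_v)$ by the two invariance remarks following Definition~\ref{alg-geom}.

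It remains to see that $m(C_v)=m_v$. From $\det C_v=1$ one gets $c_v=3a_v-w$ with $w=(a_v^2+1)/m_v$. Therefore $-f_{C_v}(y,-x)=m_vx^2+(3m_v-2a_v)xy+(w-3a_v)y^2$, which is Markov's form with $u$ replaced by $a_v$. Classical Cohn theory identifies $a_v$ with Markov's $u$ up to $u\mapsto \pm u+km_v$, which is a $\GL(2,\z)$-substitution, and Markov's theorem then gives $m(C_v)=m_v$. With this in place the Fricke induction becomes unnecessary, since Cohn's theorem already identifies $m_v$ as the upper-right entry of $C_v$.
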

\begin{proof}
This is the classical reduced-matrix realisation of the Frobenius map.
\end{proof}
Let us illustrate the above with the following example.

\begin{example}
Consider the reduced matrix
\[
    R=R_2R_1R_2R_1R_1
    =
    \begin{pmatrix}
    119 & 194\\
    284 & 463
    \end{pmatrix}.
\]
The geometric and algebraic Markov numbers of $R$ are both equal to 194. The corresponding wug-snake
graph is
\[
    \includegraphics[height=1.6cm]{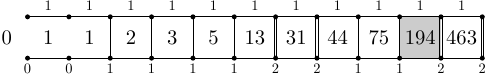}
\]
The numbers inside the squares are the entries of the perfect matching
sequence up to the corresponding square. The Markov number $194$ appears
in the second-to-last square.
%the PLLS-sequence
%\[
%    (1,1,1,1,2,2,1,1,2,2).
%\]
\end{example}

\subsubsection{Transposed matrices}

For \(v\in\mathbb Q_{0,1}\), let $R_v=\mathcal F_{R_1,R_2}^{\Lambda}(v)$ be the reduced matrix associated with \(v\), and set
\[
    S_v=R_v^T.
\]
If $R_v=R_{i_k}\cdots R_{i_1},$ then, since \(R_1\) and \(R_2\) are symmetric, $S_v
    =
    R_{i_1}\cdots R_{i_k}.$
Thus transposition corresponds to reversing the companion word.

Since the geometric Markov number is invariant under transposition, $m(S_v)=m(R_v)=m_v.$ The minimum for \(S_v\) is attained at the transposed initial vector
\((1,0)^T\), rather than at \((0,1)^T\).

The canonical head representing the initial sequence $(1,0)^T$ is as follows:
\[
\begin{array}{c}
\includegraphics[height=1.25cm]{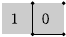}
\end{array}.
\]
All black edges have weight $1$. The polyomino wug-tiles are as above, but
the corresponding Markov number now appears in the last square.

\begin{example}
For the Markov number $194$, the corresponding transposed matrix is
\[
    S=R_1R_1R_2R_1R_2
    =
    \begin{pmatrix}
    119 & 284\\
    194 & 463
    \end{pmatrix}.
\]
The corresponding wug-snake graph is
\[
    \includegraphics[height=1.6cm]{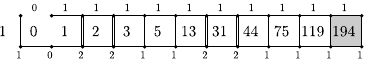}
\]
\end{example}

\subsubsection{Cohn wug-snake graph}
\label{Cohn wug-snake graph}
The elementary Cohn matrices admit the decompositions
\[
    P_1=
    \begin{pmatrix}
    1 & 1\\
    1 & 2
    \end{pmatrix}
    =
    M_{1,1}^2,
    \qquad
    P_2=
    \begin{pmatrix}
    3 & 2\\
    4 & 3
    \end{pmatrix}
    =
    (M_{-1,2}M_{1,1})^2.
\]
In this realisation, some edge weights are negative.

We use the canonical head representing $(0,1)$:
\[
    \includegraphics[height=1.8cm]{figures/snake194-6.pdf}
\]
The polyomino wug-tiles for $P_1$ and $P_2$ are
\[
\polyomino(P_1)=
\begin{array}{c}
\includegraphics[height=1.8cm]{figures/snake194-7.pdf}
\end{array},
\qquad
\polyomino(P_2)=
\begin{array}{c}
\includegraphics[height=1.8cm]{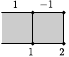}
\end{array}.
\]
The tile for $P_1$ is the same as above. The tile for $P_2$ has last
vertical edge of weight $2$, upper-right horizontal edge of weight $-1$,
and all other displayed edges of weight $1$.

\begin{proposition}
Let $\Lambda$ be the classical Farey tessellation. For the generating
pair $\langle P_1,P_2 \rangle$, the geometric Frobenius map satisfies
\[
    \phi^\Lambda_{P_1,P_2}(v)=m_v
    \qquad
    \text{for all } v\in\q_{0,1}.
\]
\end{proposition}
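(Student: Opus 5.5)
The plan is to reduce the statement to the classical reduced-matrix realisation of Proposition~\ref{prop:classical-reduced-model} via an integral conjugation, using the invariance of the geometric Markov number from Remark~\ref{Non-sym-rem}. First I check that $P_1$ and $P_2$ are simultaneously conjugate to $R_1$ and $R_2$ by a single $U\in\GL(2,\z)$, or to their transposes. Note $R_1=M_{1,1}^2=P_1$ already. For $R_2=M_{1,2}^2=\begin{psmallmatrix}1&2\\2&5\end{psmallmatrix}$ and $P_2=\begin{psmallmatrix}3&2\\4&3\end{psmallmatrix}$, both have trace $6$ and determinant $1$. So I look for $U$ with $U^{-1}R_1U=P_1$ and $U^{-1}R_2U=P_2$. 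The first condition means $U$ commutes with $P_1$, so $U$ is $\pm$ a power of $P_1$ (or of a unimodular square root, such as $M_{1,1}$). I test $U=M_{1,1}^{\pm1}$ and $U=P_1^{\pm1}$ directly.

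If no such $U$ exists, the fallback is to use a different pair of generators for the conjugation. Since the Farey tessellation subdivision rule is symmetric in how products are formed, $C_t=C_rC_s$ with $C_{0/1}=P_1$, $C_{1/1}=P_2$. I would conjugate the whole Cohn family to the reduced family by a fixed $U$ sending $P_1\mapsto R_1$ and $P_2\mapsto R_2$, possibly composed with transposition. Transposition reverses products, so it needs to be paired with the symmetry $v\mapsto 1-v$ of the Farey tree, or with swapping generator order. Transposition preserves $m$ in dimension two by Remark~\ref{Non-sym-rem}.

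Once a simultaneous conjugation $P_i=U^{-1}R_iU$ is found, the argument is an induction over the tessellation $\mathcal T_\Lambda([0,1])$. The Cohn map is defined by the products $\mathcal F(w)=\mathcal F(v_2)\mathcal F(v_1)$. Conjugation is a semigroup homomorphism, so
\[
\mathcal F^\Lambda_{P_1,P_2}(v)=U^{-1}\mathcal F^\Lambda_{R_1,R_2}(v)\,U
\]
for every vertex $v$. Remark~\ref{Non-sym-rem} then gives $m(\mathcal F_{P_1,P_2}(v))=m(\mathcal F_{R_1,R_2}(v))$, and Proposition~\ref{prop:classical-reduced-model} yields $m_v$.

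As an independent route, in case the conjugation step is awkward, I would use Cohn's theorem directly. The Cohn map for $\langle P_1,P_2\rangle$ reproduces the Cohn matrices $C_v$, since $C_{r\oplus s}=C_rC_s$ with the same ordering convention. The geometric Markov number of a Cohn matrix equals its Markov number, because $f_{C_v}$ is, up to sign and $\GL(2,\z)$-equivalence, the Markov form $f_{m;m_1,m_2}$ of Theorem~\ref{thm: markov 1879}, whose minimum is $m_v$.

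The main obstacle is exactly this identification: verifying the ordering convention of the Cohn map, which puts the product from $i=n+1$ down, against Cohn's $C_rC_s$. It also requires checking that $f_{C_v}$, which has discriminant $9m_v^2-4$, is equivalent to a Markov form, rather than just having the right discriminant. For this I would rely on Markov's theorem: a form with discriminant $9m^2-4$ that represents $m$ (via $f_{C_v}(0,1)=-m_v$) and has Markov value below $3$ is equivalent to $f_{m;m_1,m_2}$. Bounding the value below $3$ is the delicate point, and is best handled through the conjugation to the reduced model.
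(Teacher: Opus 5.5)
Your main route is correct and is essentially the paper's argument. The test you propose succeeds with $U=M_{1,1}^{-1}$, since $M_{1,1}R_iM_{1,1}^{-1}=P_i$ for $i=1,2$ (recorded in Subsection~\ref{section-aa-bb}); hence by induction $\mathcal F^\Lambda_{P_1,P_2}(v)=M_{1,1}\,\mathcal F^\Lambda_{R_1,R_2}(v)\,M_{1,1}^{-1}$, and $\GL(2,\z)$-invariance of $m$ together with Proposition~\ref{prop:classical-reduced-model} finishes the proof. The fallbacks are unnecessary, and the identification with Cohn's $C_v$ in your ``independent route'' is wrong as stated: at $v=1/2$ the Cohn map gives $P_2P_1$ (upper-right entry $7$), not $P_1P_2$, although the two are conjugate and have the same $m$.
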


\begin{proof}
This follows from the preceding transposed model. The matrices here are
conjugate to the reduced matrices used above, and the multiplication order
matches the convention for Cohn's right action. The corresponding matching
sequence is obtained by changing the initial head from $(0,1)$ to the
appropriate transposed initial vector.
\end{proof}

\begin{example}
The word corresponding to the Markov number $194$ is 
$
    P_1P_1P_2P_1P_2.
$
Starting with the vector $(0,1)$, we obtain the wug-snake graph
\[
    \includegraphics[height=1.6cm]{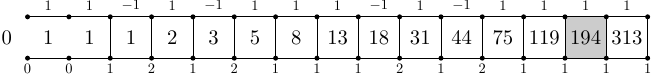}
\]
In the figure, ordinary, double, and negative edges are all represented by
single line segments; their weights are indicated separately.
\end{example}

\subsubsection{Classical snake graphs as $3\times3$ systems}
\label{Classical snake graphs as 3x3 systems}

Classical snake graphs satisfy recurrences of order $2$. In our
wug-snake framework, it is often convenient to represent them using
$3\times3$ matrices. We use the generators
\[
    M_1=M_{0,1,1}^2,
    \qquad
    M_2=\bigl(M_{1,0,1}M_{0,1,1}\bigr)^2.
\]
The canonical head representing $(0,0,1)$ is
\[
    \includegraphics[height=1.25cm]{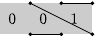}
\]
The polyomino wug-tiles for $M_1$ and $M_2$ are
\[
\polyomino(M_1)=
\begin{array}{c}
\includegraphics[height=1.25cm]{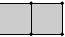}
\end{array},
\qquad
\polyomino(M_2)=
\begin{array}{c}
\includegraphics[height=1.25cm]{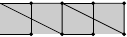}
\end{array}.
\]
All black edges have weight $1$.

\vspace{2mm}

The wug-snake graph corresponding to the word
$
    M_1M_1M_2M_1M_2
$
is
\[
    \includegraphics[height=1.00cm]{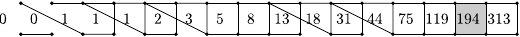}
\]
An isometric classical snake graph is shown below:
\[
    \includegraphics[height=5.3cm]{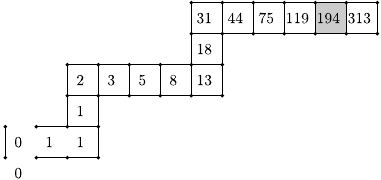}
\]
In this model, the classical snake graph starts with the value $2$, after
removing the first five entries of the associated linear recurrence system.

\begin{remark}
In this $3\times3$ realisation, we recover classical Markov numbers as
entries of matrices rather than as wug-snake determinants. For example,
\[
M_1M_1M_2M_1M_2=
\begin{pmatrix}
0&165&119\\
0&269&194\\
0&434&313
\end{pmatrix}.
\]
The Markov number $194$ appears as the middle entry of the last column.
However, the geometric Markov number of this $3\times3$ matrix is $0$. Indeed, the first column of this matrix is zero. Hence \(Me_1=0\), and therefore $f_M(e_1)=\det(e_1,Me_1,M^2e_1)=0.$ Consequently, \(m(M)=0\).
\end{remark}

\subsection{Semigroups with PLLS-sequences $(a,a)$ and $(b,b)$}
\label{section-aa-bb}

We now consider semigroups generated by two $2\times2$ matrices whose
PLLS-sequences are $(a,a)$ and $(b,b)$, respectively, with $a<b$, and
which are simultaneously reduced in the same basis. In this setting the
algebraic and geometric Markov numbers coincide. The first values are
\[
    a,\quad b,\quad a^2b+a+b, \quad
    a^4b+a^3+3a^2b+2a+b,
\quad  a^2b^3+2a^2b+ab^2+b^3+a+2b,\ldots
\]
For $a=1$ and $b=2$, this gives the classical Markov numbers
$
    1,2,5,13,29,\ldots
$
\begin{remark}
The standard Cohn matrices are conjugate to $M_{1,1}^2$ and
$M_{1,2}^2$:
\[
\begin{pmatrix}
1 & 1\\
1 & 2
\end{pmatrix}
=
M_{1,1}M_{1,1}^2M_{1,1}^{-1},
\qquad
\begin{pmatrix}
3 & 2\\
4 & 3
\end{pmatrix}
=
M_{1,1}M_{1,2}^2M_{1,1}^{-1}.
\]
This explains why perfect matchings of wug-snake graphs naturally describe
the semigroup generated by Cohn matrices, even though one of the generators
is not reduced in the original basis. It also explains why the Markov
number $1$ does not appear in the usual snake graph model.
\end{remark}

Motivated by this observation, we consider
\[
    M_1=M_{1,a}^2,
    \qquad
    M_2=\bigl(M_{1,a}M_{1,b}M_{1,a}^{-1}\bigr)^2.
\]

\begin{proposition}\label{aa-bb}
The bodies $($in white$)$ in the following wug-snake graphs represent $M_1$ and $M_2$, respectively:
\[
\begin{array}{c}
\includegraphics[height=1.5cm]{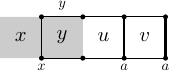}
\end{array}
\qquad\quad
\begin{array}{c}
\includegraphics[height=3.2cm]{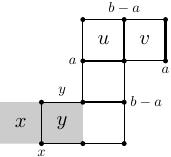}
\end{array}
\]
That is,
\[
    \begin{pmatrix}
    u\\
    v
    \end{pmatrix}
    =
    M_i
    \begin{pmatrix}
    x\\
    y
    \end{pmatrix},
    \qquad i=1,2.
\]
\end{proposition}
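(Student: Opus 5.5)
The statement is a direct calculation, so the plan is to turn each pictured body into its weight-matrix columns. Then I will run the perfect-matching recurrence of Theorem~\ref{theorem:wug-det},
\[
\mu(K_j)=\sum_{i\le j} w_{i,j}\,\mu(K_{i-1}),
\]
starting from a head with state vector $(x,y)^T$. By the remark after the definition of the snake value, the result does not depend on which head realises $(x,y)$. So I will take the canonical head $H_{(x,y)}$ and only keep track of the body columns and the neck entries.

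First I compute the two target matrices explicitly. We have $M_{1,a}=\begin{psmallmatrix}0&1\\1&a\end{psmallmatrix}$, so
\[
M_1=M_{1,a}^2=\begin{pmatrix}1&a\\a&a^2+1\end{pmatrix}.
\]
Next, $M_{1,a}^{-1}=\begin{psmallmatrix}-a&1\\1&0\end{psmallmatrix}$, and
\[
M_{1,a}M_{1,b}M_{1,a}^{-1}=\begin{pmatrix}1&b-a\\a&ab-a^2+1\end{pmatrix}.
\]
Squaring this gives $M_2$ as an explicit polynomial matrix in $a,b$. In the paper's normalisation $a=1,b=2$, this should reproduce $\begin{psmallmatrix}3&2\\4&3\end{psmallmatrix}$, which serves as a sanity check.

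For $M_1$, the body should be the order-$2$ recurrence body $B_{\M}$ with $\M=(M_{1,a},M_{1,a})$. Here Theorem~\ref{Perfect matching of a sequence -- theorem} and Theorem~\ref{transFrob-tiles}(ii) give the result at once. Concretely, the sequence $x,y,\;x+ay,\;y+a(x+ay)$ gives $u=x+ay$ and $v=ax+(a^2+1)y$, which matches $M_1\binom{x}{y}$.

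For $M_2$, I will read off the picture as a wug-snake body, presumably with some edges of weight $a$, $b$ and possibly rank $>2$, since the picture is larger. I then propagate the perfect matching sequence square by square. Each new term is a linear form in $x,y$, and the last two terms give $(u,v)$. The cleanest organisation is to view $M_2=M_{1,a}M_{1,b}^2M_{1,a}^{-1}$. Conjugation by $M_{1,a}$ corresponds to an initial segment that undoes one $M_{1,a}$-step, namely a step with a negative weight, or equivalently a rank-$3$ neck that recovers the previous state. This is followed by two $M_{1,b}$ steps and one $M_{1,a}$ step. By Proposition~\ref{product of tiles}, matching the picture's segments to these factors reduces the verification to checking each short segment. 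The final state is then compared with the explicit $M_2\binom{x}{y}$.

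The main obstacle is the $M_{1,a}^{-1}$ factor. It is not a nonnegative lower companion step, so the picture must realise it through its particular neck/rank-$3$ edges. Identifying precisely which edges produce $M_{1,a}^{-1}$, or else simply brute-forcing the recurrence through the whole body, is where care is needed. I would do the brute-force linear-form computation and confirm equality of the two entries polynomially in $a,b$.
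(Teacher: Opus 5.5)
Your strategy is the paper's: read the weights off each pictured body and run the recurrence of Theorem~\ref{theorem:wug-det} from a head with state $(x,y)^T$. The paper's proof is just ``direct computation of the corresponding snake operators'', and your $M_1$ computation is correct.

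The $M_2$ part, however, rests on a wrong target matrix, so your final comparison would fail. In fact
\[
M_{1,a}M_{1,b}M_{1,a}^{-1}
=\begin{pmatrix}1&b\\ a&ab+1\end{pmatrix}\begin{pmatrix}-a&1\\ 1&0\end{pmatrix}
=\begin{pmatrix}b-a&1\\ ab-a^2+1&a\end{pmatrix},
\]
not $\begin{psmallmatrix}1&b-a\\ a&ab-a^2+1\end{psmallmatrix}$. Your matrix has determinant $+1$ rather than $\det M_{1,b}=-1$. Its square at $a=1$, $b=2$ is $\begin{psmallmatrix}2&3\\ 3&5\end{psmallmatrix}$, so the sanity check you announced would have failed. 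The correct target is
\[
M_2=\begin{pmatrix}b^2-ab+1&b\\ b(ab-a^2+1)&ab+1\end{pmatrix}.
\]
This gives $\begin{psmallmatrix}3&2\\ 4&3\end{psmallmatrix}$ at $a=1,b=2$, and $\hat m(M_2)=b$, as in the paper's list of values.

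Your suggested organisation through an initial $M_{1,a}^{-1}$ segment also cannot be matched to the picture. The matrix $M_{1,a}^{-1}=\begin{psmallmatrix}-a&1\\ 1&0\end{psmallmatrix}$ is not a $2\times2$ lower companion matrix, so it is not a single recurrence step. Moreover, a tile with nonnegative weights has an entrywise nonnegative snake operator. Hence no initial segment of a body with weights $a$, $b-a$, $1$ (as described in the remark after the proposition) can realise $M_{1,a}^{-1}$.

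The segmentation that works is $M_2=N^2$ with the nonnegative matrix $N=M_{1,a}M_{1,b}M_{1,a}^{-1}$. From a state $(x_1,x_2)$, the steps $x_3=(b-a)x_1+x_2$ and $x_4=x_1+a x_3$ give $(x_3,x_4)^T=N(x_1,x_2)^T$; the second step uses a width-$3$ edge $w_{2,4}=1$. Two copies of this tile then give $M_2$ by Proposition~\ref{product of tiles}. For $a=1$, $b=2$ this reproduces the sequence $x+y,\,2x+y,\,3x+2y,\,4x+3y$ of the paper's tile for $\begin{psmallmatrix}3&2\\ 4&3\end{psmallmatrix}$. Your brute-force fallback through the whole body is fine once the target matrix is corrected.
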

\begin{proof}
This follows by direct computation of the corresponding snake operators.
\end{proof}

\begin{remark}
The wug-snake graphs generated by these two tiles represent the elements in
the image of the Cohn map for the semigroup generated by $M_1$ and
$M_2$. They coincide with classical domino snake graphs whose edge
weights are $a$, $b-a$, or $1$.
\end{remark}

\begin{proposition}
For every matrix \(M\) in the Farey set associated with
\((M_1,M_2)\) and the classical Farey subdivision scheme, one has $m(M)=\hat m(M).$ Thus the semigroup generated by \(M_1\) and \(M_2\) is Markov with
respect to this generating pair and subdivision scheme.
\end{proposition}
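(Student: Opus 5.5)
Everything in the Farey set is conjugate to an element of the reduced semigroup $\langle M_{1,a}^2,M_{1,b}^2\rangle$, and that semigroup is known to be Markov. The plan is to transport this fact back through the conjugation and to show that the quantity $\hat m$ is unchanged along the way.

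\textbf{Conjugation.} Put $U=M_{1,a}$. Then $M_1=UM_{1,a}^2U^{-1}$ and $M_2=UM_{1,b}^2U^{-1}$. The Cohn map is built only from products of generators, so for every vertex $v$ of the Farey tessellation we get
\[
\mathcal F^\Lambda_{M_1,M_2}(v)=U\,R_v\,U^{-1},
\qquad
R_v=\mathcal F^\Lambda_{M_{1,a}^2,M_{1,b}^2}(v).
\]
Since $U\in\GL(2,\z)$, Remark~\ref{Non-sym-rem} gives $m(M)=m(R_v)$ for $M=\mathcal F^\Lambda_{M_1,M_2}(v)$.

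\textbf{Input from \cite{KvS2020}.} For $a<b$ the semigroup $\langle M_{1,a}^2,M_{1,b}^2\rangle$ is Markov. We take this in the form
\[
m(R_v)=\hat m(R_v)=|(R_v)_{12}|
\]
for every Farey word $R_v$.

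\textbf{Algebraic side.} It remains to show $\hat m(UR_vU^{-1})=\hat m(R_v)$. Remark~\ref{Non-sym-rem} gives
\[
\hat m(U^{-1}MU)=|f_M(Ue_2)|.
\]
Apply it with $M$ replaced by $R_v$ and with $U^{-1}$ in the role of $U$. This yields $\hat m(UR_vU^{-1})=|f_{R_v}(U^{-1}e_2)|$. We compute
\[
U^{-1}=\begin{pmatrix}-a&1\\1&0\end{pmatrix},
\qquad
U^{-1}e_2=(1,0)^T=e_1,
\]
so $\hat m(M)=|f_{R_v}(e_1)|=|(R_v)_{21}|$.

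\textbf{Reduction to a symmetry statement.} The proposition is therefore equivalent to $|(R_v)_{21}|=|(R_v)_{12}|$ for every $v$. In other words, the minimum of $|f_{R_v}|$ is also attained at $e_1$. This is the transposition phenomenon already seen in the "Transposed matrices" subsection.

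\textbf{Main obstacle: proving the symmetry.} The factors $M_{1,a}^2$ and $M_{1,b}^2$ are symmetric. Hence $R_v^T$ is the reversed word. For Christoffel-type Farey words, reversal is related to the original word by a cyclic shift. I would first try to prove, by induction along the Farey tree, that each Farey word $W_v$ is a palindrome up to its two end letters. Concretely, the target is $W_v=XY\,P\,$ with $P$ a palindrome and $\{X,Y\}=\{M_{1,a}^2,M_{1,b}^2\}$, which is the classical Cohn–Christoffel structure.

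The induction step uses the mediant identity $W_{r\oplus s}=W_rW_s$. I expect the off-diagonal entries $p_{12}$ and $p_{21}$ to satisfy the same recursion, a Fricke-type trace relation. Starting values to check at the base: for the generators both entries equal $a$ (resp. $b$), and for $M_1M_2$ one verifies $a^2b+a+b$ on both sides.

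If the palindromic induction proves unwieldy, the fallback is direct. Show that $R_v^T=VR_vV^{-1}$ for some $V\in\GL(2,\z)$ fixing $\pm e_2\mapsto\pm e_1$ up to the form, by the reversal/cyclic-shift argument. Then invoke $f_{A^T}(x,y)=-f_A(y,-x)$ from Remark~\ref{Non-sym-rem} to conclude $|f_{R_v}(e_1)|=|f_{R_v}(e_2)|$.
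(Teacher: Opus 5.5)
Your reduction is correct. With $U=M_{1,a}$ one has $\mathcal F^\Lambda_{M_1,M_2}(v)=UR_vU^{-1}$, so $m$ is unchanged. Moreover $\hat m(UR_vU^{-1})=|(R_v)_{21}|$, because the first row of $U$ is $(0,1)$ and the second column of $U^{-1}$ is $e_1$.

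\textbf{The failing step.} The symmetry $|(R_v)_{21}|=|(R_v)_{12}|$ is false, already at the first mediant. In the Cohn map as defined, the product runs from $i=n+1$ down to $i=1$. So for neighbours $r<s$ one gets $\mathcal F(r\oplus s)=\mathcal F(s)\mathcal F(r)$; this is the order that produces the paper's word $R_2R_1R_2R_1R_1$ for $194$. Hence $R_{1/2}=M_{1,b}^2M_{1,a}^2$. Its upper-right entry is $a^2b+a+b$, but its lower-left entry is $ab^2+a+b$. Your base-case check is a miscalculation: a product of two symmetric matrices is not symmetric, since its transpose is the reversed product.

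For the same reason neither backup route helps. The palindrome induction fails because a word $XPY$ with $P$ palindromic transposes to $YPX$, not to itself. The fallback would have to equate $|f_{R_v}(e_1)|=|(R_v)_{21}|$ with $|f_{R_v}(e_2)|=|(R_v)_{12}|$, and these differ: for example $284$ versus $194$ when $v=2/5$, $a=1$, $b=2$.

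\textbf{The statement fails with this ordering.} Your own reduction shows that no proof can exist as the Cohn map is defined. Take $M=\mathcal F^\Lambda_{M_1,M_2}(1/2)=M_2M_1$. Then
\[
|f_M(Ue_2)|=|f_{R_{1/2}}(e_2)|=a^2b+a+b<ab^2+a+b=\hat m(M)
\]
whenever $a<b$. Concretely, for $a=1$, $b=2$,
\[
M_2M_1=\begin{pmatrix}3&2\\4&3\end{pmatrix}\begin{pmatrix}1&1\\1&2\end{pmatrix}=\begin{pmatrix}5&7\\7&10\end{pmatrix},
\qquad f_{M_2M_1}(1,1)=5,\quad f_{M_2M_1}(0,1)=-7,
\]
so $m\le5<7=\hat m$.

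\textbf{What is missing: the order of multiplication.} The statement becomes true if the Farey words in $\langle M_1,M_2\rangle$ are formed in Cohn's order $\mathcal F(r\oplus s)=\mathcal F(r)\mathcal F(s)$. This is the order behind the paper's word $P_1P_1P_2P_1P_2$ for $194$. Because $M_{1,a}^2$ and $M_{1,b}^2$ are symmetric, that word equals $UR_v^TU^{-1}$. Your computation then gives directly
\[
\hat m=|(R_v^T)_{21}|=|(R_v)_{12}|=m(R_v)=m(R_v^T)=m(UR_v^TU^{-1}).
\]
This uses the Markov property of the reduced semigroup and the conjugation and transposition invariance of $m$ in dimension two; no symmetry lemma is needed.

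The paper's proof is only a citation of \cite{KvS2020}. The transfer from the reduced generators $M_{1,a}^2,M_{1,b}^2$ to their conjugates is exactly where the order of multiplication matters, and that citation does not settle it.
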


\begin{proof}
This follows from the general theory developed in~\cite{KvS2020}.
\end{proof}

\begin{remark}
As in the classical case, the corresponding wug-snake graphs can also be
generated by $3\times3$ matrices. Define
\[
    \widetilde M_1=A^2,
    \qquad
    \widetilde M_2=(CB)^2,
\]
where
\[
A=
\begin{pmatrix}
0&1&0\\
0&0&1\\
0&1&a
\end{pmatrix},
\qquad
B=
\begin{pmatrix}
0&1&0\\
0&0&1\\
0&b-a&1
\end{pmatrix},
\qquad
C=
\begin{pmatrix}
0&1&0\\
0&0&1\\
1&0&a
\end{pmatrix}.
\]
The wug-snake bodies associated with
\(\widetilde M_1,\widetilde M_2\) produce the same weighted snake-graph
patterns as those associated with the preceding \(2\times2\) model,
after replacing heads of length \(2\) by heads of length \(3\).
\end{remark}

\subsection{A three-generator example}
\label{Further example}

We conclude with a three-generator model whose wug-snake bodies admit realisations in \(\mathbb R^3\).

Consider the following transposed Frobenius companion matrices:
\[
A=
\begin{pmatrix}
0 & 1 & 0 & 0\\
0 & 0 & 1 & 0\\
0 & 0 & 0 & 1\\
0 & 0 & 1 & 1
\end{pmatrix},
\qquad
B=
\begin{pmatrix}
0 & 1 & 0 & 0\\
0 & 0 & 1 & 0\\
0 & 0 & 0 & 1\\
0 & 1 & 0 & 1
\end{pmatrix},
\qquad
C=
\begin{pmatrix}
0 & 1 & 0 & 0\\
0 & 0 & 1 & 0\\
0 & 0 & 0 & 1\\
1 & 0 & 0 & 1
\end{pmatrix}.
\]
We study the semigroup generated by
\[
    M_1=A,
    \qquad
    M_2=BA,
    \qquad
    M_3=CBA.
\]
The following figures show polyomino wug-tiles for $M_1,M_2,M_3$ together
with their embeddings into $\r^3$:
\[
M_1:
\begin{array}{c}
\includegraphics[height=1.3cm]{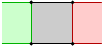}
\end{array}
\qquad\longrightarrow\qquad
\begin{array}{c}
\includegraphics[height=.75cm]{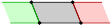}
\end{array}
\]
\[
M_2:
\begin{array}{c}
\includegraphics[height=1.3cm]{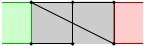}
\end{array}
\qquad\longrightarrow\qquad
\begin{array}{c}
\includegraphics[height=2cm]{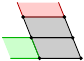}
\end{array}
\]
\[
M_3:
\begin{array}{c}
\includegraphics[height=2cm]{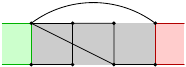}
\end{array}
\qquad\longrightarrow\qquad
\begin{array}{c}
\includegraphics[height=3cm]{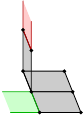}
\end{array}
\]
The grey squares represent $\polyomino(M_1)$, $\polyomino(M_2)$, and $\polyomino(M_3)$, respectively. The green and red directions indicate how a
tile is connected to the preceding and following tiles. In some cases, the
tiles must be rotated as rigid objects in space before being attached.

The generator $M_1$ changes neither direction nor plane. The generator
$M_2$ changes direction but remains in the same plane, while $M_3$
changes the plane itself.

\begin{remark}
This construction gives natural embeddings of certain wug-snake graphs into
$\r^3$.
\end{remark}

\begin{example}
Consider the product
\[
    M_1M_3M_2M_1M_3M_2M_1
    =
    ACBABA^2CBABA^2.
\]
The corresponding concatenated body is shown below:
\[
\begin{array}{c}
\includegraphics[height=1.6cm]{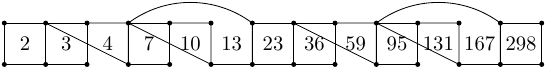}
\end{array}
\]
Here we do not specify the head. A realisation of this body in \(\mathbb Z^3\) is:
\[
\begin{array}{c}
\includegraphics[height=5cm]{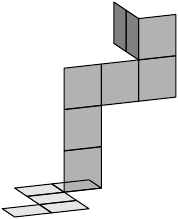}
\end{array}
\]
\end{example}

\begin{remark}
The geometric pattern of these bodies is motivated by the
\(2\times2\) matrices
\[
    M_{1,1}M_{1,i}M_{1,1}^{-1}.
\]
The cases $i=1,2$ appear in the classical construction of domino snake
graphs. The case $i=3$ leads to the three-dimensional construction
discussed here. Similar twists can be defined in higher dimensions.
\end{remark}

Finally, we would like to mention that there are several ways to embed polyomino constructions in
three-dimensional space. For instance one can consider the matrix
\[
    \widehat M_3=CA^2
\]
instead of $M_3$. The corresponding embedding is
\[
\widehat M_3:
\begin{array}{c}
\includegraphics[height=2cm]{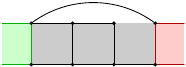}
\end{array}
\qquad\longrightarrow\qquad
\begin{array}{c}
\includegraphics[height=3cm]{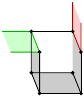}
\end{array}.
\]

\section{Lattice realisations of wug-snake graphs}
\label{Lattice realisation of wug-snake graphs}

In this section we discuss lattice realisations of wug-snake graphs. We begin
with the classical two-dimensional case, where domino snake graphs are
embedded in the half-integer grid and their slopes recover Farey coordinates.
We then adapt this construction to the semigroups with PLLS-sequences $(a,a)$ and $(b,b)$ studied in Subsection~\ref{section-aa-bb}. After
that, we discuss possible extensions to semigroups with three or more
generators. Finally, we formulate several problems concerning slowly
increasing sequences of cubes and their additive structure.

\subsection{Slopes for classical domino snake graphs}
\label{Slopes for classical dominoe snake graphs}

We start with the classical two-dimensional construction. The goal is to
associate to a domino snake graph a lattice embedding and a rational slope.
This will serve as the model for the higher-dimensional constructions below.

Let $M$ be a Cohn matrix. We denote by $\nu(M)$ an embedding of the
associated domino snake graph into the half-integer coordinate grid. The
slope of this embedding is defined by the translation vector from the first
square to the last square of $\nu(M)$. We denote its tangent by
$\tan_\nu(M)$.

\subsubsection*{Embeddings of the generators}

Recall that H.~Cohn used the following matrices as generators:
\[
M_1=
\begin{pmatrix}
1 & 1\\
1 & 2
\end{pmatrix},
\qquad
M_2=
\begin{pmatrix}
3 & 2\\
4 & 3
\end{pmatrix}.
\]
We choose the following embeddings of their associated domino snake graphs:
\[
\nu(M_1)=
\begin{array}{c}
\includegraphics[height=1.5cm]{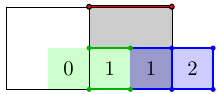}
\end{array},
\qquad
\nu(M_2)=
\begin{array}{c}
\includegraphics[height=3cm]{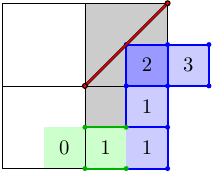}
\end{array}.
\]
Note that in the classical case of snake graphs, the very last small square is not taken into account. So these realisation represents the Markov numbers $1$ and $2$ respectively.

\vspace{2mm}

The head here is represented by the sequence $(0,1)$, and we
place the square corresponding to $0$ in the unit coordinate square adjacent to the bottom right vertex of the unit square. In the figures, the head is shown in
green and the body of the corresponding snake graph is shown in blue. The
red vector indicates the translation from the first square to the last
square. 
Thus
\[
\tan_\nu(M_1)=\frac{0}{1},
\qquad
\tan_\nu(M_2)=\frac{1}{1}.
\]

\subsubsection*{Products and Farey addition of slopes}

Suppose that embeddings have been constructed for two matrices $N_1$ and
$N_2$, with corresponding graphs
$H_1B_1$
and
$H_2B_2$.
The domino snake graph corresponding to the product $N_1N_2$ is obtained
by attaching the body $B_1$ after $H_2B_2$:
\[
H_2B_2\oplus H_1B_1=H_2B_2B_1.
\]
Equivalently, the body of the first factor is attached to the graph of the
second factor, in accordance with the convention for matrix-vector
multiplication.

\vspace{2mm}

We define the embedding $\nu(N_1N_2)$ by starting with the embedding of
$H_2B_2$ and then attaching the squares corresponding to $B_1$, beginning
at the square immediately adjacent to the last square of $\nu(N_2)$.

\begin{definition}[Iterative rule for tangents]
Assume that
\[
\tan_\nu(N_1)=\frac{y_1}{x_1},
\qquad
\tan_\nu(N_2)=\frac{y_2}{x_2},
\]
where $(x_i,y_i)$ are pairs of relatively prime nonnegative integers.
Then we define
\[
\tan_\nu(N_1N_2)
=
\frac{y_1}{x_1}\oplus\frac{y_2}{x_2}
=
\frac{y_1+y_2}{x_1+x_2}. 
\]
\end{definition}

The following example illustrates the construction.

\begin{example}
Let $N_1=M_1M_2$ and $N_2=M_2$, so that $m(N_1)=5$ and $m(N_2)=2$, and their embeddings have slopes
\[
\begin{array}{c}
\nu(N_2)=
\begin{array}{c}
\includegraphics[height=3cm]{figures/snake-add-2.pdf}
\end{array}
\\[1mm]
\displaystyle
\tan_\nu(N_2)=\frac{1}{1}
\end{array}
\qquad
\begin{array}{c}
\nu(N_1)=
\begin{array}{c}
\includegraphics[height=3cm]{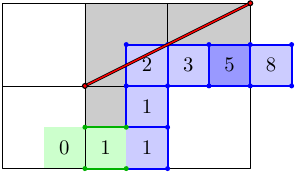}
\end{array}
\\[1mm]
\displaystyle
\tan_\nu(N_1)=\frac{1}{2}.
\end{array}
\]
Now consider
$N_1N_2=M_1M_2^2$,
with $m(N_1N_2) = 29$. The corresponding embedding is
\[
\begin{array}{c}
\nu(N_1N_2)=
\begin{array}{c}
\includegraphics[height=4.5cm]{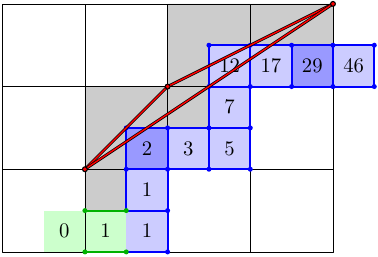}
\end{array}
\\[1mm]
\displaystyle
\tan_\nu(N_1N_2)
=
\frac{1}{1}\oplus\frac{1}{2}
=
\frac{2}{3}.
\end{array}
\]
The red triangle is bounded by an edge of the Farey tessellation. Hence it
contains no interior lattice points, and the corresponding ray intersects
precisely the union of the two snake graphs.

\vspace{2mm}

Recall that in order to obtain the classical snake graph, one should remove the first 4 squares and the last 1 square from the last picture. Then we get:
$$
\begin{array}{c}
\includegraphics[height=3cm]{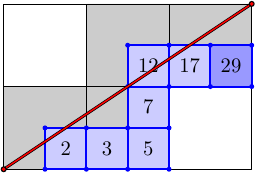}
\end{array}
$$
\end{example}

\begin{remark}
In the above convention, the matrix with the larger tangent is placed first
in the geometric concatenation.
\end{remark}

The classical relation between slopes and Markov numbers can now be
summarised as follows.

\begin{proposition}
Let $\mathcal C(v)=\mathcal F^\Lambda_{M_1,M_2}(v)$ be the Cohn matrix associated with a vertex
\(v=p/q\in\mathbb Q_{0,1}\) of the classical Farey tessellation. Then,
\[
    \tan_\nu(\mathcal C(v))=v
    \qquad\text{and}\qquad
    m(\mathcal C(v))=m_v.
\]
\end{proposition}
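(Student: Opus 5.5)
The plan is to prove both identities by structural induction on the Farey tessellation. The tangent identity follows from the additivity of slopes under concatenation, and the Markov-number identity follows from the classical Cohn theory combined with the wug-snake results already established.

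\textbf{Base case.} For the base vertices $v=0/1$ and $v=1/1$ we have $\mathcal C(0/1)=M_1$ and $\mathcal C(1/1)=M_2$, by the definition of the Cohn map. The chosen embeddings give $\tan_\nu(M_1)=0/1$ and $\tan_\nu(M_2)=1/1$, which settles the slope claim. For the Markov numbers, $m(M_1)=1=m_{0/1}$ and $m(M_2)=2=m_{1/1}$, which can be checked directly from the forms $f_{M_1}(x,y)=x^2+xy-y^2$ and $f_{M_2}(x,y)=4x^2-2y^2$.

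\textbf{Inductive step for slopes.} Let $v$ be a new vertex created from a Farey edge $[r,s]$ with $r<s$, so $v=r\oplus s$. The Cohn map gives $\mathcal C(v)=\mathcal C(s)\mathcal C(r)$, with the larger index placed first, in accordance with the remark on placement. The embedding $\nu(\mathcal C(s)\mathcal C(r))$ is built by attaching the body of $\mathcal C(s)$ after the graph of $\mathcal C(r)$. Hence its translation vector is the sum of the two translation vectors, provided the attached squares begin at the square adjacent to the end.

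One subtlety is that the head contributes a fixed offset. I would handle this by measuring from the first square, so that the offset cancels and the translation vectors genuinely add as integer vectors $(x_r,y_r)+(x_s,y_s)$. By induction these vectors are primitive and equal to $(q_r,p_r)$ and $(q_s,p_s)$. Their sum is $(q_r+q_s,p_r+p_s)$, which is primitive because $p_sq_r-p_rq_s=1$ for Farey neighbours. Therefore $\tan_\nu(\mathcal C(v))=v$.

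\textbf{Markov numbers.} The identity $m(\mathcal C(v))=m_v$ is the content of Proposition~\ref{prop:classical-reduced-model} (and its Cohn analogue above), transported through the conjugation $M_{1,1}(\cdot)M_{1,1}^{-1}$. By Remark~\ref{Non-sym-rem}, $m$ is invariant under $\GL(2,\z)$-conjugation. Moreover, conjugation respects products, so the Cohn maps of $\langle M_1,M_2\rangle$ and of $\langle M_{1,1}^2,M_{1,2}^2\rangle$ correspond vertex by vertex. Combining these facts gives $m(\mathcal C(v))=m_v$.

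Alternatively, one can argue from Cohn's theorem directly. It gives $\mathcal C(v)=\begin{psmallmatrix}a&m_v\\c&3m_v-a\end{psmallmatrix}$, so $f_{\mathcal C(v)}$ has the shape $cx^2+(3m_v-2a)xy-m_vy^2$, with discriminant $9m_v^2-4$ because $\det=1$ and $\tr=3m_v$. Markov's Theorem~\ref{thm: markov 1879}, together with the remark that such forms are equivalent to $f_{m;m_1,m_2}$, then yields minimum $m_v$. Corollary~\ref{alg-geo-Markov-det-wug} interprets this minimum as a wug-snake determinant.

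\textbf{Main obstacle.} The hardest step is making the slope additivity rigorous. One must verify that the concatenation rule really places the attached body so that its translation vector is unchanged, with no rotation. I would check this first on the generators, by confirming that each body begins and ends in the same local configuration, and then argue that this configuration is preserved under products.
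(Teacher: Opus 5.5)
The paper gives no proof of this proposition. It is stated as a summary of the classical picture, with a pointer to the literature. Within the paper's own framework both parts are short:
\begin{itemize}
\item The ``Iterative rule for tangents'' is a \emph{definition}, $\tan_\nu(N_1N_2):=\tan_\nu(N_1)\oplus\tan_\nu(N_2)$. The first identity is therefore an induction along the Farey tree from $0/1$ and $1/1$. It uses exactly your observation $p_sq_r-p_rq_s=1$ to keep the representatives in lowest terms.
\item The matrices $M_1,M_2$ here are the $P_1,P_2$ of the Cohn wug-snake subsection. So $m(\mathcal C(v))=m_v$ is literally the earlier statement $\phi^\Lambda_{P_1,P_2}(v)=m_v$. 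The paper justifies that statement by conjugating to the reduced model, which is also your first argument.
\end{itemize}
That first argument is sound. Conjugation by $M_{1,1}\in\GL(2,\z)$ sends $R_1,R_2$ to $M_1,M_2$, commutes with the recursive definition of the Cohn map, and preserves $m$. Proposition~\ref{prop:classical-reduced-model} then gives the claim. Your base-case computations are also correct.

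Your geometric slope argument is not needed, and as written it would not work. Write $T(N)$ for the first-to-last displacement of $\nu(N)$. In $\nu(N_1N_2)=H_2B_2B_1$, the body $B_1$ is attached to the tail of $\nu(N_2)$, which replaces the head of $\nu(N_1)$. Measuring from the first square therefore gives $T(N_1)+T(N_2)-h$, where $h$ is the displacement from the first head square to the square at which bodies attach. Additivity holds only for displacements measured from that attachment square, and only after your ``same terminal configuration'' check. Moreover, the induction ``these vectors equal $(q_r,p_r)$'' needs the base vectors themselves to be $(1,0)$ and $(1,1)$. The tangents $0/1$ and $1/1$ alone do not give this: vectors $(2,0)$ and $(1,1)$ would produce slope $1/3$ at $v=1/2$.

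Your alternative route via Cohn's theorem has a genuine gap, for two reasons.
\begin{itemize}
\item \textbf{Product order.} With $\mathcal C(v)=\mathcal C(s)\mathcal C(r)$, which is what the Cohn map gives, $\mathcal C(v)$ is not Cohn's $C_v=C_rC_s$. For instance, $\mathcal C(1/2)=M_2M_1=\begin{psmallmatrix}5&7\\7&10\end{psmallmatrix}$ has upper-right entry $7$, not $5$. Note also that the placement remark concerns geometric order; in the paper's example the larger-tangent factor is the right-hand one.
\item \textbf{Circularity.} Markov's theorem, and the remark after it, apply only to forms already known to have Markov value below $3$. For an integer form of discriminant $9m_v^2-4$, that condition, $m(f)>\sqrt{9m_v^2-4}/3$, is equivalent to $m(f)\ge m_v$, which is exactly what is to be proved.
\end{itemize}
The data you use (determinant $1$, trace $3m$, upper-right entry $m$) do not determine the minimum. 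The matrix $A=\begin{psmallmatrix}133&610\\370&1697\end{psmallmatrix}\in\SL(2,\z)$ has trace $3\cdot610$ and upper-right entry $610$, a Markov number, yet $f_A(1,0)=370<610$. What is missing is the congruence condition on the entry $a$ that identifies $f_{C_v}$ with Markov's form $f_{m;m_1,m_2}$; that is the real content of Cohn's theorem. As written, you should rely on your first argument instead.
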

\begin{remark}
The map $\tan_\nu(M)\longmapsto m(M)$ coincides with the classical Frobenius map.
\end{remark}
For further details on this classical picture, see~\cite{Schiffler2024}.

\subsection{The case of PLLS-sequences $(a,a)$ and $(b,b)$}
\label{Case of PLLS-sequences a,a,b,b}

We now adapt the preceding construction to the semigroups considered in
Subsection~\ref{section-aa-bb}. Let
\[
M_1=M_{1,a}^2,
\qquad
M_2=\bigl(M_{1,a}M_{1,b}M_{1,a}^{-1}\bigr)^2.
\]
The reduced
representatives of \(M_1\) and \(M_2\) have PLLS-sequences \((a,a)\) and \((b,b)\). We define embeddings of the generators by
\[
\begin{array}{c}
\nu_{[a,a,b,b]}(M_1)=
\begin{array}{c}
\includegraphics[height=1.5cm]{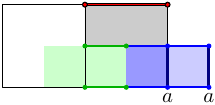}
\end{array}
\\[1mm]
\displaystyle
\tan_{\nu_{[a,a,b,b]}}(M_1)=\frac{0}{1}
\end{array}
\qquad
\begin{array}{c}
\nu_{[a,a,b,b]}(M_2)=
\begin{array}{c}
\includegraphics[height=3cm]{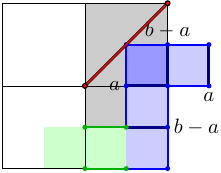}
\end{array}
\\[1mm]
\displaystyle
\tan_{\nu_{[a,a,b,b]}}(M_2)=\frac{1}{1}.
\end{array}
\]
\begin{theorem}
Let \(\Lambda\) be the classical Farey tessellation, and, for
\(v\in\mathbb Q_{0,1}\), set
\[
    A_v=\mathcal F^\Lambda_{M_1,M_2}(v).
\]
If \(v=p/q\) is written in lowest terms, then the embedding
\(\nu_{[a,a,b,b]}(A_v)\) has primitive displacement vector \((q,p)\).
In particular,
\[
    \tan_{\nu_{[a,a,b,b]}}(A_v)=v.
\]
Consequently, the map
\[
    v\longmapsto \hat m(A_v)
\]
is precisely the algebraic Frobenius map
\(\hat\phi^\Lambda_{M_1,M_2}\). Thus the rational directions parametrize,
not necessarily injectively, the algebraic Markov set of
\(\langle M_1,M_2\rangle\) with respect to \(\Lambda\).
\end{theorem}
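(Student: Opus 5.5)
The plan is to argue by induction along the Farey tree, tracking the displacement vector of the embedding rather than only its tangent.

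For every vertex $v=p/q$ of $\mathcal T_\Lambda([0,1])$ we define $D(v)\in\z^2$ to be the translation vector from the first square to the last square of $\nu_{[a,a,b,b]}(A_v)$. We will prove the stronger statement $D(v)=(q,p)$, with $\gcd(p,q)=1$. The tangent statement then follows at once, and so does primitivity.

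\textbf{Base case.} For the generators, the pictures defining $\nu_{[a,a,b,b]}(M_1)$ and $\nu_{[a,a,b,b]}(M_2)$ give displacement vectors $(1,0)$ and $(1,1)$. These match $v=0/1$ and $v=1/1$. Here we use $A_{0/1}=M_1$ and $A_{1/1}=M_2$ by the definition of the Cohn map.

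\textbf{Inductive step.} Let $v=r\oplus s$ be a new vertex obtained from a Farey edge $[r,s]$ with $r<s$. By the definition of the Cohn map for the classical scheme, whose subdivision matrix has column $(1,1)^T$, we have $A_v=A_sA_r$.

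\begin{enumerate}
\item We apply the concatenation rule from the classical case, verbatim with the $(a,a,b,b)$ tiles. The embedding of $A_sA_r$ is that of $H B_r$ followed by the body $B_s$. The body is placed starting at the square adjacent to the last square, with the same gluing offset as in the generator pictures.
\item This gives $D(v)=D(r)+D(s)$. We must check that the attaching rule contributes no extra translation. The intended mechanism is that each tile, with its green and red directions, is drawn so that its displacement is measured from its own entry square to its exit square. Concatenation then adds these displacements.
\item With $D(r)=(q_r,p_r)$ and $D(s)=(q_s,p_s)$, we get $D(v)=(q_r+q_s,\,p_r+p_s)$. This is exactly the reduced mediant $p/q$.
\item It is reduced because Farey neighbours satisfy $q_rp_s-p_rq_s=1$, which forces $\gcd(p,q)=1$.
\end{enumerate}

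\textbf{Main obstacle.} The hard part is step 2: showing that the geometric gluing is well defined and additive for these weighted tiles. I would handle it by checking that both generator tiles start and end in squares of the same type, meaning the same local orientation, since their PLLS-periods have even length. This makes the attaching of any body translation-invariant, with no rotation. I would then verify directly that each tile occupies a path of squares whose head-to-tail vector equals the stated one. Because the bodies here coincide with classical domino snake graphs up to edge weights, as noted in the remark after Proposition~\ref{aa-bb}, the underlying planar shapes agree with the Cohn case. The combinatorial additivity can therefore be imported from the classical slope construction, where the Farey-edge triangle contains no interior lattice points.

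\textbf{Final claim.} Once $D(v)=(q,p)$ is established, the map $v\mapsto\hat m(A_v)$ is by definition $\hat m\circ\mathcal F^\Lambda_{M_1,M_2}=\hat\phi^\Lambda_{M_1,M_2}$. Since $\tan_{\nu_{[a,a,b,b]}}(A_v)=v$, the slopes recover the vertex $v$, and so the rational directions parametrize the algebraic Markov set. This parametrization need not be injective, because $\hat m$ may identify distinct matrices.
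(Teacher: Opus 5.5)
Your proposal is correct and follows the paper's route. The paper's proof only observes that the $(a,a,b,b)$ construction is combinatorially identical to the classical Cohn case: the same generator words and the same Farey concatenation of slopes, with weights $1,2$ replaced by $a,b$. That is exactly the reduction you use for the additivity step, and your Farey-tree induction on displacement vectors, together with the determinant-one argument $q_rp_s-p_rq_s=1$ for primitivity, just spells out what the paper leaves implicit.
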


\begin{proof}
The proof is combinatorially identical to the classical case. The same
Farey concatenation of slopes produces the same pattern of generator words,
with the entries $1$ and $2$ in the classical model replaced by $a$
and $b$, respectively.
\end{proof}

\begin{remark}
One may similarly consider pairs of PLLS-sequences of different lengths and
with different entries. In that case, the elementary polyomino blocks are
replaced by more general graph textures inside coordinate boxes. The theory
of the slope map $\tan_\nu$ remains unchanged, since it depends only on
the arrangement of the boxes and not on the internal texture.
\end{remark}

We end this subsection by recalling a counterexample to the uniqueness
conjecture for $((4,4),(11,11))$-Markov numbers from~\cite{KvS2020}.

\begin{example}
Consider the semigroup generated by
\[
M_1=M_{1,4}^2,
\qquad
M_2=\bigl(M_{1,4}M_{1,11}M_{1,4}^{-1}\bigr)^2.
\]
Consider two elements of this semi-group with Farey coordinates $4/5$ and $1/7$,
respectively:
$$
M_2^4M_1=
\left(\begin{array}{cc}
83712817 & 355318099 
\\
 928389367 & 3940538102 
\end{array}\right),
M_2M_1^6=
\left(\begin{array}{cc}
83879225 & 355318099 
\\
 930136651 & 3940122082
\end{array}\right).
$$
The algebraic and geometric Markov numbers are both equal to
$
355318099. 
$

Hence the uniqueness conjecture in this semigroup does not hold.
For further examples, see~\cite{KvS2020,Matty2019}.
\end{example}

\subsection{The case of three or more generators}
\label{Case of three and more generators}

We now briefly indicate how the preceding picture can be extended to
semigroups with three or more generators. Let
$
M_1,\ldots,M_n
$
be generators of a semigroup, with $n\geq 3$.

\vspace{2mm}

For each generator $M_i$, we choose an embedding $\nu(M_i)$ into
$\r^n$. We formally assign the tangent vector
$
\tan_\nu(M_i)=(0,\ldots,0,1,\ldots,1),
$
with $i$ terminal entries equal to $1$.

\vspace{2mm}

Next, choose a multidimensional Farey tessellation, for instance one of the
tessellations from Example~\ref{Farey tessellations examples}. The rule for
multiplying matrices is accompanied by a rule for concatenating the
corresponding embedded wug-snake graphs. The desired compatibility condition
is
\[
\tan_\nu(AB)=\tan_\nu(A)\oplus\tan_\nu(B),
\]
where $\oplus$ denotes the multidimensional Farey sum of rational vectors
introduced in Subsection~\ref{Farey addition of rational vectors}.

\vspace{2mm}

Unlike the two-generator case, there are many possible choices of initial
embeddings, Farey tessellations, and concatenation rules. These choices may
lead to different theories of higher-dimensional wug-snake graphs. We now
illustrate this phenomenon with an example.

\subsubsection{A three-generator example}
\label{Example of a semigroup related to such snake graphs}

Consider the semigroup generated by the following three matrices:
\[
\begin{aligned}
M_1&=
\begin{pmatrix}
1 & 1\\
1 & 2
\end{pmatrix}
=
M_{1,1}M_{1,1}^2M_{1,1}^{-1},\\[2mm]
M_2&=
\begin{pmatrix}
3 & 2\\
4 & 3
\end{pmatrix}
=
M_{1,1}M_{1,2}^2M_{1,1}^{-1},\\[2mm]
M_3&=
\begin{pmatrix}
14 & 5\\
25 & 9
\end{pmatrix}
=
M_{1,1}\bigl(M_{1,1}M_{1,3}\bigr)^2M_{1,1}^{-1}.
\end{aligned}
\]
The first two matrices are the standard Cohn matrices. The third matrix has
PLLS-sequence $(3,1,3,1)$.

We consider the following embeddings of their polyomino wug-snake graphs into
$\r^3$, obtained by taking double iterates of the constructions introduced
in Subsection~\ref{Further example}:
\[
\begin{array}{l}
\nu(M_1)=
\begin{array}{c}
\includegraphics[height=3.3cm]{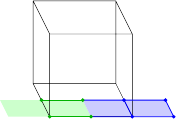}
\end{array}
\\
\nu(M_2)=
\begin{array}{c}
\includegraphics[height=4cm]{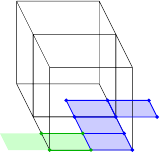}
\end{array}
\end{array}
\qquad
\nu(M_3)=
\begin{array}{c}
\includegraphics[height=6.5cm]{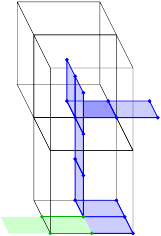}.
\end{array}
\]
These embeddings follow a double subdivision of the original cubes. Their
tangent vectors are
\[
\tan_\nu(M_1)=(0:0:1),
\qquad
\tan_\nu(M_2)=(0:1:1),
\qquad
\tan_\nu(M_3)=(1:1:1).
\]

\begin{example}\label{example-3d}
Consider the product
$
M_3M_2=
\begin{pmatrix}
62 & 43\\
111 & 77
\end{pmatrix}.
$
The corresponding wug-snake graph is
\[
\begin{array}{c}
\includegraphics[height=1.5cm]{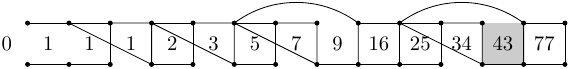}
\end{array}.
\]
Its realisation in $\r^3$ is shown in the figure below on the left. In the
classical reduction, one removes the first four squares and the final square
on the right; this is shown in the figure below on the right:
\[
\begin{array}{c}
\includegraphics[height=5cm]{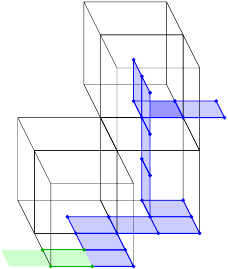}
\end{array}
\qquad\qquad
\begin{array}{c}
\includegraphics[height=4cm]{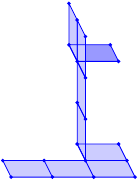}
\end{array}.
\]
After removing the first four squares and the last square, and after
untwisting the first remaining square, the wug-snake graph becomes
\[
\begin{array}{c}
\includegraphics[height=1.5cm]{figures/snake3d-3.pdf}
\end{array}.
\]
In this representation, the factor $M_2$ is hidden in the geometry of the
graph and is no longer directly visible from the final planar form.
\end{example}
\begin{remark}
In the classical theory, the initial entries $0$ and $1$ are removed from
the matching sequence. As a result, the Markov number $1$ is not represented
by the usual snake graphs. In the wug-snake setting, however, it is naturally
represented by the matrix $M_1$, corresponding to the wug-snake graph
\[
\begin{array}{c}
\includegraphics[height=1cm]{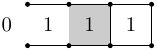}
\end{array}.
\]
\end{remark}

Let us now return to the construction of Markov numbers for this semigroup.
We use the pairwise Farey summation described in
Example~\ref{Farey tessellations examples}. The corresponding embedded
wug-snake graphs are constructed by attaching snake bodies in the direction
of the first coordinate vector. We choose heads corresponding to the initial
vector $(x,y)=(0,1)$.

The Markov numbers obtained after the first two subdivision steps are shown
below:

\[
\begin{array}{c}
\includegraphics[height=5cm]{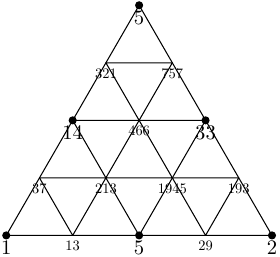}
\end{array}
\]
The left vertex of the triangle corresponds to $M_1$, the right vertex to $M_2$, and the top vertex to $M_3$.

\vspace{2mm}

Let us conclude this subsection with the following structural proposition.

\begin{proposition}\label{Cohn-extended-snake-prop}
For $k\geq 2$, consider the matrix
\[
N_k
=
M_{1,1}M_{1,k}M_{1,1}^{-1}
=
\begin{pmatrix}
k-1 & 1\\
k & 1
\end{pmatrix}.
\]
Then $N_k$ admits a natural family of polyomino wug-tile realisations in
$\r^k$. These realisations are constructed recursively as follows.
\begin{itemize}
    \item[\textup{Step 1.}] Choose a coordinate square in the
    $(x_1,x_2)$-plane and fix one of its vertices, denoted by $V$.

    \item[\textup{Step 2.}] Add a second coordinate square in the
    $(x_1,x_2)$-plane, distinct from the first one, sharing an edge with it
    and also containing the vertex $V$.

    \item[\textup{Step $i$.}] Suppose that the first $i-1$ squares have
    been constructed. The $i$-th square is chosen in the
    $(x_{i-1},x_i)$-coordinate plane so that it contains the vertex $V$ and
    shares an edge with the previously constructed square.
\end{itemize}
\end{proposition}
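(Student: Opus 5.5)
The plan is to first identify the recurrence that a tile for $N_k$ must encode, then read off the weight matrix of the corresponding body, and only afterwards check that the geometric construction in Steps~1--$k$ produces exactly this weighted bipartite graph.

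\textbf{Step A: algebra.} A direct multiplication confirms the displayed form of $N_k$: $M_{1,1}M_{1,k}=\begin{psmallmatrix}1&k\\1&k+1\end{psmallmatrix}$, $M_{1,1}^{-1}=\begin{psmallmatrix}-1&1\\1&0\end{psmallmatrix}$, and their product is $\begin{psmallmatrix}k-1&1\\k&1\end{psmallmatrix}$. So a tile for $N_k$ must send the state vector $(x,y)^T$ to $(u,v)^T=\bigl((k-1)x+y,\;kx+y\bigr)^T$.

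\textbf{Step B: the recurrence and the body.} I would realise this map with a body of length $k$ whose new perfect matching terms are
\[
t_1=x+y,\qquad t_j=t_{j-1}+x\quad(2\le j\le k).
\]
Then $t_j=jx+y$, so the last two terms are $t_{k-1}=u$ and $t_k=v$; here $k\ge 2$ is used.

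The body therefore has the following weight data. Let $d$ be the head length, so $x$ is the matching number $\mu(K_{d-1})$ and $y=\mu(K_d)$. Column $d+j$ has a $1$ on the diagonal (giving the term $t_{j-1}$, with $t_0:=y$), the mandatory subdiagonal $1$, and a single further entry $1$ in row $d$ (giving $\mu(K_{d-1})=x$). All other entries vanish.

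By Theorem~\ref{theorem:wug-det}, the recurrence $\mu(K_n)=\sum_i w_{i,n}\mu(K_{i-1})$ then reproduces $t_1,\dots,t_k$ exactly. The neck-length is $1$, so this is a polyomino wug-tile for $N_k$ by Definition~\ref{def: Polyomino wug-tile}. The tile is independent of the head, since the snake value depends only on the state vector. Its rank is $k+1$: all the extra edges start at the fixed vertex $u_d$. This is not minimal, which is consistent with the word ``natural'' rather than ``minimal'' in the statement.

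\textbf{Step C: the geometry.} This is the step I expect to be the main obstacle. The key observation is that all extra edges share the single vertex $u_d$, and I would identify $u_d$ with the point $V$.

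In the square-lattice picture each new term is a unit square. Square $j$ carries its subdiagonal and diagonal edges, and additionally its edge incident to $V$. So one needs $k$ squares that all contain $V$, with consecutive squares sharing an edge.

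In a plane at most four unit squares meet at $V$, and only two of them can contain $V$ while continuing to share edges without closing up and creating extra adjacencies. This is why each Step~$i$ turns into the new coordinate plane $(x_{i-1},x_i)$. The new square shares with the previous square exactly the edge in direction $x_{i-1}$ through $V$, and it introduces the new direction $x_i$, so no unintended edges or coincident vertices arise. This argument needs checking by induction on $i$: one must verify that the vertex set of the union is the disjoint union of the expected vertices and that the only edges are the lattice edges listed above.

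Finally, I would compare the perfect-matching expansion at the last square with the recurrence of Step~B. Expanding along the edge of square $i$ at $V$ forces either the previous state ($t_{i-1}$) or the state fixed at $V$ ($x$), exactly as in the proof of Proposition~\ref{kn-cn}. This shows that the embedded graph is the wug-snake graph of Step~B, and hence that each realisation is a polyomino wug-tile for $N_k$.
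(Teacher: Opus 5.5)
Your proposal is correct and follows the paper's proof: an induction over the squares, in which each new square contributes one elementary transition until the operator is $N_k$. Where the paper only says the snake operator ``is multiplied by the required factor'', you make the mechanism explicit. Every square contains $V=u_d$, so each new square adds the fixed quantity $x=\mu(K_{d-1})$. This gives the all-weights-one body of rank $k+1$; in the $2$-dimensional state picture it corresponds to $N_k=M_{-1,2}^{k-1}M_{1,1}$.

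One minor slip is in your planar aside, which is motivation only. Three squares can surround $V$ in a plane as a zigzag. The correct count is that the head's last square and the first two body squares already fill three quadrants at $V$, so a third body square in the $(x_1,x_2)$-plane would close up.
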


\begin{proof}
The statement follows by a straightforward induction on $i$. At each step,
the newly added coordinate square realizes the next elementary transition in
the corresponding wug-snake body, and the resulting snake operator is
multiplied by the required factor. After $k$ steps, the induced operator is
precisely
\[
N_k=
\begin{pmatrix}
k-1 & 1\\
k & 1
\end{pmatrix}.
\]
\end{proof}

\begin{corollary}
Once a realisation of each $N_k$ in $\r^k$ is fixed, we obtain natural
polyomino realisations in $\r^\infty$ for all elements of the semigroup
\[
\left\{
M_{1,1}NM_{1,1}^{-1}
\,\middle|\,
N\in\LC_{\geq0}(2,\z)
\right\}.
\]
\end{corollary}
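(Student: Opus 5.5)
The statement to prove is the final corollary: once realisations of each $N_k$ in $\r^k$ are fixed, every element of $\{M_{1,1}NM_{1,1}^{-1} : N\in\LC_{\geq0}(2,\z)\}$ gets a natural polyomino realisation in $\r^\infty$.

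My plan is to reduce the corollary to Proposition~\ref{Cohn-extended-snake-prop} together with the multiplicativity of tiles, Proposition~\ref{product of tiles}. The first step is to describe generators of the set $\{M_{1,1}NM_{1,1}^{-1} : N\in\LC_{\geq0}(2,\z)\}$. Conjugation is a semigroup isomorphism, so this set is the semigroup generated by the conjugates $M_{1,1}M_{a,q}M_{1,1}^{-1}$ of the generators $M_{a,q}$ of $\LC_{\geq0}(2,\z)$, with $a,q\geq 0$. Here $M_{a,q}$ denotes the $2\times 2$ lower companion matrix with last row $(a,q)$. I will first treat the invertible generators $M_{1,q}$, whose conjugates are exactly the matrices $N_q$ for $q\geq 2$. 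The small cases $q=0,1$ will be handled directly as planar tiles, which I regard as the case $k\leq 2$ of the construction: for example, $M_{1,1}M_{1,1}M_{1,1}^{-1}=M_{1,1}$ is itself a lower companion matrix and already has a planar rank-2 tile.

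For the noninvertible generators $M_{a,q}$ with $a\neq 1$, I will factor them inside $\LC_{\geq 0}(2,\z)$ before conjugating. One candidate is to write $M_{a,q}$ as $M_{1,q}$ composed with a diagonal-type step realised by a recurrence, as in the proof of Theorem~\ref{thm:lc-mat-glc-ll}. A simpler alternative, if it works out, is to realise the conjugate of $M_{a,q}$ by the same square chain as $N_q$, but with the weight of the first neck edge multiplied by $a$.

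Next I will use the fixed realisations. For each generator $G$ I take the fixed embedded body $\nu(G)$ in $\r^{k}\subset\r^\infty$, where $\r^k$ is spanned by the first $k$ coordinate directions. For a word $G_{i_1}\cdots G_{i_s}$, Proposition~\ref{product of tiles} shows that the concatenated body $\nu(G_{i_s})\cdots\nu(G_{i_1})$ is a polyomino wug-tile for the product. The concatenation must be geometric: the next body is attached so that its first square shares the terminal edge of the previous one. In Proposition~\ref{Cohn-extended-snake-prop} the first and last squares of each realisation lie in coordinate planes spanned by $x_1,x_2$ or $x_{k-1},x_k$. I will therefore compose with a coordinate permutation isometry of $\r^\infty$, shifting the $x_{k-1},x_k$ plane back to the $x_1,x_2$ plane, before attaching the next body. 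This keeps every piece inside finitely many coordinates, so the whole graph embeds in $\r^\infty$.

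The main obstacle is well-definedness. A matrix in the semigroup may have several factorisations, and the corollary only asserts existence of realisations once one is fixed. My first choice is to realise each element through a fixed normal form, namely the unique PLLS-type factorisation from Proposition~\ref{PLLS-prop} in the invertible case. In the general case I fall back on fixing any factorisation. I will then check that the rigid-motion attachments never create spurious edges or squares that would change the matching recurrence; this check is local, since consecutive squares share only the attaching edge.
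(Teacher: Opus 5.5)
Your skeleton matches the paper's proof. It takes tiles for the generators $N_k$ from Proposition~\ref{Cohn-extended-snake-prop}, concatenates them via Proposition~\ref{product of tiles}, uses a coordinate-shifting attachment rule, and gets well-definedness from a unique word. However, the paper uses only the $N_k$ together with the fact that the semigroup of reduced matrices is freely generated. In effect it works with the conjugate of $\langle M_{1,q}: q\geq1\rangle$, with $N_1=M_{1,1}$. The parts of your proposal that try to cover all of $\LC_{\geq0}(2,\z)$ do not work as written.

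\textbf{Noninvertible generators.} Take $M_{a,q}$ with $a\neq1$. Since $\det M_{b,r}=-b$, every factorisation of $M_{a,q}$ inside $\LC_{\geq0}(2,\z)$ contains some $M_{b,r}$ with $b\neq1$. Your ``diagonal-type step'' is $\operatorname{diag}(a,1)=M_{1,0}M_{a,0}$, so the first route is circular. Moreover, products of the $N_k$ have determinant $\pm1$, so no concatenation of $N_k$-tiles can represent these elements. The second route also fails. The target is $M_{1,1}M_{a,q}M_{1,1}^{-1}=\begin{psmallmatrix}q-a&a\\q+1-a&a\end{psmallmatrix}=N_q+(a-1)\begin{psmallmatrix}-1&1\\-1&1\end{psmallmatrix}$. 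By the recurrence of Theorem~\ref{theorem:wug-det}, each entry of a snake operator is a polynomial with nonnegative coefficients of degree at most one in each individual weight. Hence multiplying a single weight of a nonnegatively weighted square chain by $a$ changes the operator by $(a-1)$ times an entrywise nonnegative matrix. That can never equal the mixed-sign correction above when $a\neq1$.

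\textbf{Small cases and uniqueness.} The same sign argument shows that $N_0=\begin{psmallmatrix}-1&1\\0&1\end{psmallmatrix}$ is not ``the case $k\leq2$'' of the square-chain construction; it needs a negative weight. So these elements can only be realised by extra, independently chosen tiles, for instance rank-$2$ tiles from Theorem~\ref{transFrob-tiles}. That is input the fixed realisations of the $N_k$ do not supply.

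The uniqueness step breaks too. Proposition~\ref{PLLS-prop} gives a unique factorisation only for reduced matrices, that is, words in $M_{1,q}$ with $q\geq1$. Invertible elements of $\LC_{\geq0}(2,\z)$ may involve $M_{1,0}$, which is not reduced and satisfies $M_{1,0}^2=I$ and $M_{1,a}M_{1,0}M_{1,b}=M_{1,a+b}$. So there is no free normal form. Fixing an arbitrary factorisation gives some tile for each element, but it loses $\polyomino(MN)=\polyomino(N)\cdot\polyomino(M)$, so you do not get a polyomino realisation in the paper's sense.

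The consistent fix is to restrict, as the paper's proof implicitly does, to the conjugate of the freely generated reduced semigroup. There your argument and the paper's coincide.
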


\begin{proof}
The polyomino realisations of the generators are provided by
Proposition~\ref{Cohn-extended-snake-prop}. A realisation of a product is
then obtained by concatenating the corresponding polyomino wug-tiles.

There are several possible rules for choosing the ambient embedding of such
products. One simple rule is to increase the ambient dimension whenever a
new generator is attached. Another is to keep the direction of the tail of
the snake, which is a standard basis vector $e_i$, as small as possible at
each step.

Since the semigroup of reduced matrices is freely generated, each element
has a unique word in the generators. Therefore, once an embedding rule is
fixed, each element of the conjugated semigroup has a well-defined associated
polyomino realisation.
\end{proof}

\subsection{Slowly increasing sequences of cubes}
\label{Problem on addition of slowly increasing sequences of cubes}

In the previous subsection, wug-snake graphs were used as textures on
configurations of cubes. We now formulate this idea more systematically.
The wug-snake graph should be viewed as an additional combinatorial texture
placed on a sequence of cubes; the underlying lattice configuration is
defined independently.

\subsubsection{Slowly increasing sequences of cubes and their wug-snake graphs}

We begin with a higher-dimensional analogue of a sequence of adjacent
squares.

\begin{definition}
A finite or infinite sequence of points $(v_1,\ldots,v_n)$ in $\z^d$ is called \textit{slowly increasing} if $v_{j+1}-v_j$ is a standard coordinate vector for every $j$. Let $\#(j)$ denote the
index of this coordinate vector, and set $\#(0)=1$. We define
\[
c(j)=\#(j)-\#(j-1)\pmod d
\]
and call $c(j)$ the \textit{cyclic shift at position $j$}.
\end{definition}

Recall that $\r^d$ has the standard lexicographic order on vertices. For
example, in $\r^3$ the point $(1,2,3)$ precedes $(1,3,2)$.

\begin{definition}
A \textit{slowly increasing sequence of cubes} is the union of all unit
lattice cubes whose lexicographically smallest vertices form a slowly
increasing sequence.
\end{definition}

We now associate wug-snake graphs to such cube sequences algebraically.

\begin{definition}
Let $G$ be a semigroup generated by $A_1,\ldots,A_d$. Let
\[
S=(v_1,\ldots,v_n)
\]
be a slowly increasing sequence in $\z_{\geq0}^d$ starting at the origin.
The element
\[
A(S)=
\prod_{j=1}^{n} A_{c(n-j+1)+1}
\]
is called the \textit{representative} of $S$. The corresponding
wug-snake graph is denoted by $\wug(S)$.
\end{definition}

This definition assigns a wug-snake texture to a sequence of cubes, just as
classical domino snake graphs assign textures to increasing sequences of
squares in the two-dimensional case.

\begin{example}\label{cubes-1}
Consider the following slowly increasing sequence of cubes:
\[
\begin{array}{c}
\includegraphics[height=5cm]{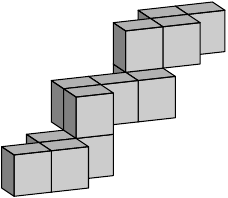}
\end{array}
\]
It corresponds to the semigroup element
\[
A_2A_2A_2A_2A_2A_1A_3A_3A_3A_3A_2A_1
=
A_2^5A_1A_3^4A_2A_1.
\]
Here the matrices are written in reverse order, following the convention for
matrix-vector multiplication. For the semigroup considered in
Subsection~\ref{Example of a semigroup related to such snake graphs}, the
corresponding number of perfect matchings is
\[
36313494507.
\]
\end{example}

The preceding construction gives a natural map from certain elements of the
semigroup generated by $A_1,\ldots,A_d$ to lattice configurations in
$\z_{\geq0}^d$.

\subsubsection{Slowly increasing sequences of cubes and slopes}

Conversely, one can associate a slowly increasing sequence of cubes to many
integer vectors. We first fix a tie-breaking convention.

\vspace{2mm}

Consider a unit cube whose faces are parallel to the coordinate hyperplanes. Then its vertices are {\it lexicographically ordered} by their the sequences if their coordinates. This order 
naturally induces an ordering on the space of the $k$-faces of the cube. We call this ordering the {\it lexicographic ordering}.

\vspace{2mm}

Now we can define the sequence associated with a vector.
\begin{definition}
Let $v\in\z^d$. Consider the open line segment from the origin to $v$.
Take all unit lattice cubes intersected by this segment. If the segment
passes through a lower-dimensional face shared by several cubes, choose the
cube for which this face is lexicographically smallest. The resulting
slowly increasing sequence of cubes is said to be \textit{associated with}
$v$, and is denoted by $C(v)$.
\end{definition}

\begin{example}
The slowly increasing sequence shown in Example~\ref{cubes-1} is
$C(7,5,3)$.
\end{example}

In dimension two, the concatenation of snake graphs is compatible with
Farey addition of slopes. In higher dimensions, however, the sum of two
slowly increasing sequences associated with vectors need not be slowly
increasing. This motivates the following problems.

\begin{problem}
For which vectors $v_1$ and $v_2$ does a natural concatenation of
$C(v_1)$ and $C(v_2)$ produce $C(v_1+v_2)$?
\end{problem}

Here the word ``concatenation'' is intentionally left flexible. One possible
operation is to attach the sequence $C(v_2)$ to the final hyperface of $C(v_1)$, for instance along the hyperface orthogonal to the first
coordinate vector.

\begin{problem}
Develop an additive theory of slowly increasing sequences of cubes.
\end{problem}

\printbibliography

\end{document}